\documentclass[12pt]{amsart}
\copyrightinfo{2010}{Luke Oeding}
\usepackage{url,amstext,amsfonts,amssymb,amscd,amsbsy,amsmath,amsthm,verbatim,mathrsfs}
\usepackage{ifthen, fullpage, mathrsfs, cite,hyperref}
\usepackage{color}

\newtheorem{theorem}{Theorem}[section]
\newtheorem{conjecture}[theorem]{Conjecture}
\newtheorem{lemma}[theorem]{Lemma}
\newtheorem{prop}[theorem]{Proposition}
\newtheorem{obs}[theorem]{Observation}

\newtheorem{cor}[theorem]{Corollary}
\newtheorem{question}[theorem]{Question}

\theoremstyle{definition}

\newtheorem{example}[theorem]{Example}

\theoremstyle{remark}
\newtheorem{remark}[theorem]{Remark}
\newcommand{\SL}{\operatorname{SL}}
\newcommand{\GL}{\operatorname{GL}}

\newcommand{\ZZ}{\mathbb Z}

\newcommand{\CC}{\mathbb C}
\newcommand{\PP}{\mathbb P}
\newcommand{\V}{\mathcal{V}}
\newcommand{\I}{\mathcal{I}}
\newcommand{\A}{\mathcal{A}}
\newcommand{\G}{\left( \SL(2)^{\times n}\right) \ltimes \mathfrak{S}_{n}}
\newcommand{\tdt}{\times\dots\times}
\newcommand{\otdot}{\otimes\dots\otimes}

\newcommand{\ie}{\emph{i.e. }}
\DeclareMathOperator{\Sym}{Sym}
\DeclareMathOperator{\adj}{adj}

\begin{document} 
\title[Principal Minors of Symmetric Matrices]{Set-Theoretic Defining Equations of the Variety of Principal Minors of Symmetric Matrices}

\author{Luke Oeding}
\address{Dipartimento di Matematica ``U. Dini'' \\ Universit\`a degli Studi di Firenze\\ Viale Morgagni 67/A\\ 50134 Firenze, Italy }
\email{oeding@math.unifi.it}
\thanks{This material is based upon work supported by the National Science Foundation under Award No. 0853000: International Research Fellowship Program (IRFP),
and U. S. Department of Education grant Award No. P200A060298: Graduate Fellowships for Ph.D. Students of Need in Mathematics (GAANN)}

\subjclass[2000]{14L30, 13A50, 14M12, 20G05, 15A72, 15A69,15A29}


\date{\today}

\begin{abstract}
The variety of principal minors of $n\times n$ symmetric matrices, denoted $Z_{n}$, is invariant under the action of a group $G\subset \GL(2^{n})$ isomorphic to $\G$.  We describe an irreducible $G$-module of degree $4$ polynomials constructed from Cayley's $2 \times 2 \times 2$ hyperdeterminant and show that it cuts out $Z_{n}$ set-theoretically.  This solves the set-theoretic version of a conjecture of Holtz and Sturmfels.  Standard techniques from representation theory and geometry are explored and developed for the proof of the conjecture and may be of use for studying similar $G$-varieties.
\end{abstract}
\maketitle

\section{Introduction}\label{sec:Intro}

The problem of finding the relations among principal minors of a matrix of indeterminants dates back (at least) $1897$ when Nanson \cite{Nanson} found relations among the principal minors of an arbitrary $4\times 4$ matrix.  In $1928$ Stouffer \cite{Stouffer1928} found an expression for the determinant of a matrix in terms of a subset of its principal minors.  
Griffin and Tsatsomeros \cite{GriffTsat2} point out that the number of independent principal minors was essentially known to Stouffer in 1924, \cite{Stouffer1924, Stouffer1928}.  In fact, Stouffer \cite{Stouffer1928} claims that this result was already known to MacMahon in 1893 and later by Muir.
Subsequently, interest in the subject seems to have diminished, however
much more recently, there has been a renewed interest in the relations among principal minors and their application to matrix theory, probability, statistical physics and spectral graph theory. 

In response to questions about principal minors of \emph{symmetric} matrices, Holtz and Sturmfels \cite{HSt} introduced the algebraic variety of principal minors of symmetric $n\times n$ matrices (denoted $Z_{n}$ herein -- see Section~\ref{sec:defs} for the precise definition) and asked for generators of its ideal. 
In the first nontrivial case, \cite{HSt} showed that $Z_{3}$ is an irreducible hypersurface in $\PP^{7}$ cut out by a special degree four polynomial, namely Cayley's hyperdeterminant of format $2 \times 2 \times 2$.  In the next case they showed (with the aid of a computer calculation) that the ideal of $Z_{4}$ is minimally generated by $20$ degree four polynomials, but only $8$ of these polynomials are copies of the hyperdeterminant constructed by natural substitutions. The other 12 polynomials were separated into classes based on their multidegrees. This was done in a first draft of \cite{HSt}, and at that point, the geometric meaning of the remaining polynomials and their connection to the hyperdeterminant was still somewhat mysterious. Because of the symmetry of the hyperdeterminant, Landsberg suggested to Holtz and Sturmfels the following:
\begin{theorem}[{\cite[Theorem 12]{HSt}}]\label{thm:symmetry} The variety $Z_{n}$ is invariant under the action of 
\[\G.\]
\end{theorem}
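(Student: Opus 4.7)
The strategy is to verify $G$-invariance directly on the parameterization of $Z_n$. Identify $\CC^{2^n}$ with $(\CC^2)^{\otimes n}$ via the basis $e_S := e_{\epsilon_1} \otimes \cdots \otimes e_{\epsilon_n}$, where $\epsilon_i = 1$ if $i \in S$ and $0$ otherwise, so that $G$ acts on $\PP((\CC^2)^{\otimes n})$ in the natural way: each $\SL(2)_i$ on the $i$-th tensor slot, and $\mathfrak{S}_n$ by permutation of slots. The variety $Z_n$ is the projective closure of the image of the principal-minor map
\[ \phi : A \;\mapsto\; \sum_{S\subseteq [n]} \Delta_S(A)\, e_S. \]
Since this image is dense in $Z_n$, it suffices to show that for each generator $g \in G$ and generic symmetric $A$, there is a symmetric matrix $A'$, obtained from $A$ by an explicit (possibly rational) operation, with $g \cdot \phi(A) \propto \phi(A')$ projectively; taking closures then gives $G \cdot Z_n = Z_n$.

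It is convenient to encode $\phi(A)$ as the multilinear polynomial
\[ f_A(x_1, \ldots, x_n) \;=\; \det\bigl(A + \operatorname{diag}(x_1, \ldots, x_n)\bigr) \;=\; \sum_{S\subseteq [n]} \Delta_S(A) \prod_{i\notin S} x_i, \]
so that each $\SL(2)_i$ acts projectively by a fractional linear substitution in $x_i$. Three of the generator families then correspond to elementary matrix operations: (i) $\mathfrak{S}_n$ permutes the $x_i$'s and is realized by simultaneous row/column permutation $A \mapsto PAP^{T}$; (ii) the diagonal torus in $\SL(2)_i$ (rescaling $x_i$) is realized by symmetric conjugation $A \mapsto DAD$ for an appropriate diagonal $D$; (iii) the unipotent substitution $x_i \mapsto x_i + c$ is realized by $A \mapsto A + c E_{ii}$, directly from the formula for $f_A$. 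Together with $\mathfrak{S}_n$, items (i)--(iii) generate the subgroup $B^{\times n} \rtimes \mathfrak{S}_n \subset G$, where $B \subset \SL(2)$ is the Borel.

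The main step is to realize a single Weyl element $w_1 \in \SL(2)_1$; adjoining this to the above and using $\mathfrak{S}_n$-conjugation to move $w_1$ between factors then produces all of $G$. Projectively $w_1$ acts by $x_1 \mapsto -1/x_1$, and I claim the corresponding matrix operation is the symmetric \emph{principal pivot transform} at $(1,1)$: block-writing $A = \bigl(\begin{smallmatrix} a & b^{T} \\ b & C \end{smallmatrix}\bigr)$ with $a = A_{11} \neq 0$, set
\[ A' := \begin{pmatrix} -1/a & b^{T}/a \\ b/a & C - b b^{T}/a \end{pmatrix}. \]
The matrix $A'$ is symmetric since $C$ is symmetric and $a$ is scalar. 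The identity $\phi(A') \propto w_1 \cdot \phi(A)$ then follows from the Schur complement factorization
\[ f_A(x) \;=\; (a + x_1)\,\det\!\bigl(C + \operatorname{diag}(x_{\hat 1}) - b b^{T}/(a + x_1)\bigr) \]
by substituting $x_1 \mapsto -1/x_1$ and clearing denominators: a short algebraic simplification shows the result equals $f_{A'}(x)$ up to the overall factor $-1/a$, which is irrelevant projectively.

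The main obstacle is precisely this Weyl-element case: one must track the signs and the projective scalar in the principal-pivot identity, and verify symmetry of the pivoted matrix. Steps (i)--(iii) are routine polynomial manipulations. Once (iv) is in hand, $Z_n$ is invariant under a generating set of $G$, and the theorem follows by passing to the Zariski closure.
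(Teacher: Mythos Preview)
Your argument is correct. Note, however, that the paper does not itself prove this theorem: it is quoted from \cite{HSt}, and the only alternative proof the paper alludes to is a geometric one from the author's thesis, realizing $Z_n$ as a linear projection of the Lagrangian Grassmannian. So there is no in-paper argument to compare against line by line.

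That said, your route is genuinely different from the Lagrangian Grassmannian approach. You work entirely at the level of the parameterization, checking invariance generator by generator via the generating-function identity $f_A(x)=\det(A+\operatorname{diag}(x))$. The permutation, torus, and unipotent cases are routine as you say; the substantive step is the Weyl element, and your identification of it with the symmetric principal pivot transform at $(1,1)$ is exactly right. (Carrying the Schur-complement computation through gives $x_1\, f_A(-1/x_1,x_{\hat 1})=a\cdot f_{A'}(x)$, so the overall scalar is $a$ rather than $-1/a$, but this is immaterial projectively.) Your argument is elementary and fully self-contained; the Lagrangian Grassmannian argument instead explains the symmetry conceptually---$\mathrm{Sp}_{2n}$ acts on the Lagrangian Grassmannian, and $(\SL_2)^{\times n}\rtimes\mathfrak{S}_n$ sits inside $\mathrm{Sp}_{2n}$ as the subgroup preserving a splitting into symplectic planes---at the cost of importing that machinery. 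Your approach is closer in spirit to what one would expect the original \cite{HSt} verification to look like.
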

It should be noted that Borodin and Rains \cite{BorodinRains} found a similar result for two other cases; when the matrix is not necessarily symmetric and for a Pfaffian analog. In \cite{oeding_thesis}, we showed that $Z_{n}$ is a linear projection of the well-known Lagrangian Grassmannian,  which can also be interpreted as the variety of all minors of a symmetric matrix.  We used this projection to give a geometric proof of Theorem~\ref{thm:symmetry}.

In \cite{HSt}, the span of the $\G$-orbit of the $2 \times 2 \times 2$ hyperdeterminant is named \emph{the hyperdeterminantal module} (denoted $HD$ herein -- see Section~\ref{sec:tensorreps}).  It was then understood -- and included in the final version of \cite{HSt} -- that the $20$ degree four polynomials are a basis of the hyperdeterminantal module when $n=4$.  This interpretation led to the following:
\begin{conjecture}[{\cite[Conjecture 14]{HSt}}]
The prime ideal of the variety of principal minors of symmetric matrices, is generated in degree four by the hyperdeterminantal module for all $n \geq 3$.
\end{conjecture}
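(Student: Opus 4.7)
The plan is to upgrade the set-theoretic equality $V(HD)=Z_n$ to the ideal-theoretic statement $\langle HD\rangle=I(Z_n)$ by proving that the subscheme cut out by $HD$ is reduced. The containment $\langle HD\rangle \subseteq I(Z_n)$ is immediate: Cayley's $2\times 2\times 2$ hyperdeterminant vanishes on $Z_3$, its natural substitutions into $\CC[x_I]$ for $I\subseteq\{1,\dots,n\}$ vanish on $Z_n$, and since $I(Z_n)$ is $G$-invariant the whole irreducible $G$-module $HD$ lies in $I(Z_n)$. Because $Z_n$ is irreducible (the closure of the image of the polynomial principal-minor map from $\Sym^2\CC^n$), the prime ideal $I(Z_n)$ is the radical of $\langle HD\rangle$, so it suffices to prove that $\langle HD\rangle$ is already radical.

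First I would establish the set-theoretic equality $V(HD)=Z_n$ by the representation-theoretic and geometric tools advertised in the introduction (the $G=\G$ action together with the realization of $Z_n$ as a linear projection of the Lagrangian Grassmannian, as in \cite{oeding_thesis}). Next, I would exhibit a single smooth point $p\in Z_n$ at which the Jacobian of a spanning set of $HD$ has rank equal to the codimension $2^n-1-\binom{n+1}{2}$. A natural candidate is the image, under the principal-minor map, of a generic matrix close to the identity, at which $Z_n$ is smooth and many hyperdeterminantal polynomials factor. This computation is streamlined by $G$-equivariance: the Jacobian rank is constant on the $G$-orbit of $p$, and the tangent space to $V(HD)$ at $p$ decomposes into $G$-isotypic components which one can match against the known tangent space of $Z_n$ (itself a pushforward of the tangent space to symmetric matrices under the principal-minor map). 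Generic smoothness of $V(HD)$ along $Z_n$ immediately gives generic reducedness along the unique top-dimensional component.

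To rule out embedded components, I would proceed by induction on $n$, proving that $\CC[x_I]/\langle HD\rangle$ is Cohen--Macaulay of dimension $\binom{n+1}{2}+1$. The inductive step uses the bordered-matrix recursion for principal minors: fixing the last row and column of the symmetric matrix induces a morphism $Z_n\to Z_{n-1}$ whose generic fibers are affine spaces, and the corresponding linear specialization $x_I\mapsto\ell_I$ (for $I\ni n$) should be shown to respect the $HD$ relations, so that the quotient becomes a polynomial extension of $\CC[x_J]/\langle HD\rangle$ for the sub-index set $J\subseteq\{1,\dots,n-1\}$. Cohen--Macaulayness together with generic reducedness gives reducedness, hence radicality, and the theorem follows.

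The main obstacle will be the Cohen--Macaulayness (or equivalently Serre's $S_2$) step: establishing that the hyperplane-section specialization is compatible with the $HD$-relations in each degree is delicate because the principal-minor recursion mixes variables in nonlinear ways through the hyperdeterminantal identities. A more representation-theoretic alternative, which may in the end be more tractable, is to compare Hilbert functions degree by degree: decompose $\Sym^d(\CC^{2^n})$ into $G$-isotypic components, determine the $G$-module structure of $I(Z_n)_d$ via the explicit parametrization (using Cauchy-type formulas on the Lagrangian Grassmannian), and compute the image of the multiplication map $HD\otimes \Sym^{d-4}(\CC^{2^n})\to \Sym^d(\CC^{2^n})$ inside $I(Z_n)_d$ in each isotypic component. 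Agreement of $G$-multiplicities in every degree, combined with set-theoretic equality, forces the ideals to coincide. Either route reduces the ideal-theoretic conjecture to a concrete but highly nontrivial equivariant computation whose feasibility for arbitrary $n$ is the true heart of the problem.
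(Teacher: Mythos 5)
The statement you are addressing is Conjecture 14 of Holtz--Sturmfels, and you should be aware that the paper itself does \emph{not} prove it: the paper proves only the set-theoretic version (its Main Theorem, that $\V(HD)=Z_n$ as sets), and the ideal-theoretic statement that $HD$ generates the prime ideal $\I(Z_n)$ is left open there except for the computer-verified cases $n=3,4$. So there is no proof in the paper to compare yours against, and your proposal, as written, does not close the gap either. Your overall logic is sound as a reduction: given $\V(HD)=Z_n$ and irreducibility of $Z_n$, the prime ideal $\I(Z_n)$ is the radical of $\langle HD\rangle$, and generic reducedness (via a Jacobian rank computation at a smooth point) together with Cohen--Macaulayness (hence $S_1$) would give reducedness and therefore radicality. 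But both pillars of the reduction are left unestablished. First, the set-theoretic equality is not a preliminary step one can wave at with ``representation-theoretic and geometric tools'' and the Lagrangian Grassmannian projection; it is the entire content of the paper (Sections 4--8, via the Augmentation/Characterization Lemmas and Proposition 7.1 on points differing in one coordinate), and the Lagrangian Grassmannian projection is only used there to prove the symmetry theorem, not the main theorem.

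Second, and more seriously, the Cohen--Macaulayness (or $S_1$/no-embedded-primes) step is precisely the open heart of the ideal-theoretic conjecture, and your inductive scheme for it does not obviously work: the specialization you propose, fixing the last row and column so that $Z_n\to Z_{n-1}$ has affine-space fibers, is not a linear specialization of the coordinates $x_I$ (the minors with $I\ni n$ depend nonlinearly on the bordering entries), so the claim that $\CC[x_I]/\langle HD\rangle$ becomes a polynomial extension of the $(n-1)$-case is unsupported, and there is no argument that the $HD$-relations in degree $d$ are exactly accounted for under it. The alternative Hilbert-function route likewise requires knowing the full $G$-module decomposition of $\I(Z_n)_d$ and of the image of $HD\otimes\Sym^{d-4}$ in every degree and every isotypic component for all $n$, which is exactly the computation that has not been done (and which defeated direct computation already at $n=5$). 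In short, your proposal correctly identifies what would suffice, but neither of its two essential steps is proved, and the second is the genuinely open problem; you have a research outline, not a proof.
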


While the first two cases of the conjecture ($n=3,4$) were proved using a computer, the dimension of the hyperdeterminantal module and the number of variables both grow exponentially with $n$ and this renders computational methods ineffective already in the next case $n=5$, for which the hyperdeterminantal module has a basis of $250$ degree $4$ polynomials on $32$ variables. Our point of departure is the use of the symmetry of $Z_{n}$ via tools from representation theory and the geometry of $G$-varieties.

The main purpose of this work is to solve the set-theoretic version of the Holtz--Sturmfels conjecture:   (See Example~\ref{rmk:HDweight} for the representation-theoretic description of the hyperdeterminantal module in terms of Schur modules used in the following statement.)
\begin{theorem}[Main Theorem]\label{thm:Main Theorem}
The variety of principal minors of symmetric $n\times n$ matrices, $Z_{n}$, is cut out set-theoretically by the hyperdeterminantal module, which is the irreducible $\G$-module of degree $4$ polynomials
\[HD = S_{(2,2)} S_{(2,2)} S_{(2,2)} S_{(4)} \dots  S_{(4)}.\]
\end{theorem}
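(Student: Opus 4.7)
One direction of the theorem is essentially inherited from the base case verified in \cite{HSt}. Cayley's $2\times 2\times 2$ hyperdeterminant vanishes on $Z_{3}$, and for any triple $\{i,j,k\}\subset\{1,\dots,n\}$ the principal minors of a symmetric $n\times n$ matrix indexed by subsets of $\{i,j,k\}$ coincide with the principal minors of the corresponding $3\times 3$ symmetric submatrix; hence the hyperdeterminant naturally substituted into the $i,j,k$-th tensor slots vanishes on $Z_{n}$. Since $Z_{n}$ is $G$-invariant by Theorem~\ref{thm:symmetry} and $HD$ is by definition the $G$-span of this polynomial, the inclusion $Z_{n}\subseteq\mathcal{V}(HD)$ follows for free.

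For the harder inclusion $\mathcal{V}(HD)\subseteq Z_{n}$, my plan is induction on $n$ combined with a normal-form reduction exploiting the $G$-action. Given $p\in\mathcal{V}(HD)$, first use the diagonal torus in $\operatorname{SL}(2)^{\times n}$ and an overall scaling to normalize so that $p_{\emptyset}=1$, postponing the locus $\{p_{\emptyset}=0\}$ to a limit argument. Next, consider the projection $\pi\colon\mathbb{P}^{2^{n}-1}\dashrightarrow\mathbb{P}^{2^{n-1}-1}$ that keeps only the coordinates $p_{I}$ with $n\notin I$. The subset of equations in $HD$ that do not involve the $n$-th tensor slot is essentially a copy of the smaller hyperdeterminantal module, so $\pi(p)$ lies in the corresponding $\mathcal{V}(HD)$ for $n-1$, which by induction equals $Z_{n-1}$. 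Thus there is a symmetric $(n-1)\times(n-1)$ matrix $A'$ whose principal minors match $\{p_{I}:n\notin I\}$. The remaining task is to extend $A'$ by a last row and column so that the resulting symmetric matrix realizes also the coordinates $p_{I\cup\{n\}}$.

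The main obstacle is this extension step. Expanding each required principal minor $p_{I\cup\{n\}}$ along the last row and column produces a polynomial system in the unknown off-diagonal entries $a_{in}$ for $i<n$, and the plan is to show that the additional vanishing conditions coming from elements of $HD$ that do involve the $n$-th slot---most naturally the $(i,j,n)$-hyperdeterminants for various pairs $i,j<n$, together with the further elements obtained by twisting by $\operatorname{SL}(2)^{\times n}$---force this system to be consistent, with solution unique up to the intrinsic sign ambiguity $A\mapsto DAD$ inherent in the principal-minor-assignment problem for diagonal sign matrices $D$. I expect this compatibility to be the central difficulty, along with the degenerate strata where $A'$ is singular (so the $a_{in}$ are not uniquely determined by Cramer-type formulas) or where $p_{\emptyset}=0$. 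These boundary cases should be dispatched by specializing from the open stratum where the argument runs cleanly and invoking the $G$-invariance and Zariski-closedness of both sides; the $\mathfrak{S}_{n}$ factor of $G$ is useful here, since it lets one permute away from any single coordinate hyperplane.
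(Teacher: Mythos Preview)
Your outline matches the paper's: the easy inclusion $Z_n\subseteq\V(HD)$ is exactly Proposition~\ref{prop:one way}, and the hard inclusion is by induction on $n$, working on the open set $\{z_{[0,\dots,0]}\neq 0\}$ and projecting to $Z_{n-1}$. But the proposal has a genuine gap at precisely the step you flag as the ``central difficulty,'' and the paper's resolution of that step is the real content of the argument.

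What the paper does for the extension step is \emph{not} to solve the polynomial system in the $a_{in}$ directly. Instead it proves a rigidity statement (Proposition~\ref{prop:no almost}): for $n\geq 4$, if two points of $\widehat{\V(HD)}$ with $z_{[0,\dots,0]}\neq 0$ agree in every coordinate except possibly $z_{[1,\dots,1]}$, then they agree everywhere. This fails for $n=3$, which is why both $n=3$ and $n=4$ are taken as base cases from \cite{HSt}. The rigidity is established by analyzing the one-variable specialization $HD_{[1,\dots,1]}(z)$ and showing that the locus where this family of quadratics in $X^{[1,\dots,1]}$ could have two common roots is contained in a union of Segre products of $\PP^1$'s and $\PP^3$'s already lying in $Z_n$ (Lemma~\ref{lem:almost}). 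The key representation-theoretic input is that the auxiliary module $B$ built from those coefficients contains $S_{(4,1)}S_{(4,1)}S_{(4,1)}S_{(5)}\cdots S_{(5)}$, whose zero set is identified explicitly (Lemmas~\ref{lem:zerosB1}--\ref{lem:big segre ideal}). None of this machinery appears in your sketch; without Proposition~\ref{prop:no almost} or an equivalent, the consistency of the sign choices for the $a_{in}$ across all $(i,j,n)$-hyperdeterminants, and then across all higher minors, simply does not follow, and the extension step does not close.

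Two smaller points. First, the paper uses \emph{all} $n$ projections $\eta^i$ simultaneously (via the Characterization Lemma~\ref{lem:characterization}) rather than only the $n$-th one; the compatibility of the resulting candidate matrices is again handled by applying Proposition~\ref{prop:no almost} one level down (Lemma~\ref{lem:nonempty}), which is why the inductive argument actually runs for $n\geq 5$. Second, your remark that $\mathfrak{S}_n$ ``lets one permute away from any single coordinate hyperplane'' is off: $\mathfrak{S}_n$ fixes $z_{[0,\dots,0]}$. The reduction to $U_0$ works because the full $\SL(2)^{\times n}$-orbit of $U_0$ is all of $\PP(V_1\otdot V_n)$, not via a closure argument.
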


The set-theoretic result is sufficient for many applications related to principal minors of symmetric matrices.  In particular, set-theoretic defining equations of $Z_{n}$ are necessary and sufficient conditions for a given vector of length $2^{n}$ to be expressed as the principal minors of a symmetric matrix. We state this practical membership test as follows:
\begin{cor}\label{cor}
Suppose $w=[w_{[i_{1},\dots,i_{n}]}] \in \CC^{2^{n}}$ with $i_{j}\in \{0,1\}$.  Then $w$ represents the principal minors of a symmetric $n \times n$ matrix if and only if $w$ and all images under changes of coordinates by $\G$ are zeros of Cayley's $2 \times 2 \times 2$ hyperdeterminant
\begin{multline*}  
 (w_{I_{[0,0,0]}})^{2} (w_{I_{[1,1,1]}})^{2} +(w_{I_{[1,0,0]}})^{2} (w_{I_{[0,1,1]}})^{2}
+(w_{I_{[0,1,0]}})^{2} (w_{I_{[1,0,1]}})^{2} +(w_{I_{[0,0,1]}})^{2} (w_{I_{[1,1,0]}})^{2}
\\
-2 w_{I_{[0,0,0]}} w_{I_{[1,0,0]}} w_{I_{[0,1,1]}} w_{I_{[1,1,1]}}
-2 w_{I_{[0,0,0]}} w_{I_{[0,1,0]}} w_{I_{[1,0,1]}} w_{I_{[1,1,1]}} 
-2 w_{I_{[0,0,0]}} w_{I_{[0,0,1]}} w_{I_{[1,1,0]}} w_{I_{[1,1,1]}}
\\
-2 w_{I_{[1,0,0]}} w_{I_{[0,1,0]}} w_{I_{[0,1,1]}} w_{I_{[1,0,1]}}
-2 w_{I_{[1,0,0]}} w_{I_{[0,0,1]}} w_{I_{[0,1,1]}} w_{I_{[1,1,0]}}
-2 w_{I_{[0,1,0]}} w_{I_{[0,0,1]}} w_{I_{[1,0,1]}} w_{I_{[1,1,0]}} 
\\
+4 w_{I_{[0,0,0]}} w_{I_{[0,1,1]}} w_{I_{[1,0,1]}} w_{I_{[1,1,0]}}
+4 w_{I_{[0,0,1]}} w_{I_{[0,1,0]}} w_{I_{[1,0,0]}} w_{I_{[1,1,1]}}
,\end{multline*}
where $I_{[i_{1},i_{2},i_{3}]} = [i_{1},i_{2},i_{3},0, \dots ,0]$ for $i_{j}\in \{0,1\}$.
\end{cor}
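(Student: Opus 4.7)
The plan is to deduce the corollary directly from the Main Theorem by unpacking the definition of the hyperdeterminantal module $HD$. Recall that $HD$ is, by construction (see the paragraph before Conjecture~1.1 and the forthcoming Section~\ref{sec:tensorreps}), the linear span of the $G$-orbit of Cayley's $2\times 2\times 2$ hyperdeterminant $\Delta$, viewed as a polynomial in the eight coordinates $w_{I_{[i_{1},i_{2},i_{3}]}}$ with $I_{[i_{1},i_{2},i_{3}]}=[i_{1},i_{2},i_{3},0,\dots,0]$. The Main Theorem asserts that $Z_{n}$ is precisely the common zero locus $V(HD)\subset \CC^{2^{n}}$.

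First, I would observe that for any linear space of polynomials $M=\langle f_{\alpha}\rangle$, a point $w$ lies in $V(M)$ if and only if every generator $f_{\alpha}$ vanishes at $w$. Applying this to $M=HD$ with generating set $\{g\cdot \Delta : g\in G\}$, membership $w\in V(HD)$ is equivalent to $(g\cdot \Delta)(w)=0$ for all $g\in G$. Using the standard action of $G$ on polynomial functions, $(g\cdot \Delta)(w)=\Delta(g^{-1}\cdot w)$, and since $g$ ranges over all of $G$, this is equivalent to $\Delta$ vanishing on every image of $w$ under $G$.

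Combining these two steps, $w$ represents the principal minors of some symmetric $n\times n$ matrix if and only if $w\in Z_{n}$ (by definition of $Z_{n}$), if and only if $w\in V(HD)$ (by the Main Theorem), if and only if Cayley's $2\times 2\times 2$ hyperdeterminant $\Delta$ — written explicitly in the displayed formula of the corollary — vanishes on $w$ and on all its $G$-translates. There is essentially no obstacle here beyond the Main Theorem itself: the corollary is a direct reformulation of the set-theoretic statement $Z_{n}=V(HD)$ into a concrete and computable membership test, made usable by the observation that one need only check a single polynomial (the hyperdeterminant) against the finitely-parametrized family of group elements in $\G$.
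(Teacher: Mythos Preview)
Your proposal is correct and matches the paper's intent: the corollary is stated immediately after the Main Theorem as a direct reformulation, with no separate proof given, precisely because $HD$ is by definition the span of the $G$-orbit of the $2\times 2\times 2$ hyperdeterminant, so that $\V(HD)=Z_n$ translates exactly into the membership test you describe via $(g\cdot\Delta)(w)=\Delta(g^{-1}\cdot w)$. The only detail you might acknowledge is the passage between the projective statement of the Main Theorem and the affine statement of the corollary (handled in the paper by the remark in Section~\ref{sec:defs} that the affine principal minor map defines a closed subset of $\CC^{2^n}$), but this is a minor point and your argument is otherwise complete.
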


A second, unifying purpose of this work is to study $Z_{n}$ as a prototypical (non-homo\-geneous) $G$-variety.  
We aim to show the use of standard constructions in representation theory and geometry, and to further develop general tools for studying geometric and algebraic properties of such varieties.   
We anticipate these techniques will be applicable to other $G$-varieties in spaces of tensors such as those that arise naturally in computational complexity \cite{LBull, Burgisser}, signal processing \cite{CGLM, ComoR06, DD08}, and algebraic statistics \cite{PachterSturmfels, AllmanRhodes08} (see also \cite{LandsbergTensorBook} for a unified presentation of the use of geometry and representation theory in these areas), and especially to the case of principal minors of arbitrary matrices studied by Lin and Sturmfels, \cite{LinSturmfels} and Borodin and Rains \cite{BorodinRains}.  In fact, we use techniques similar to those found here as well as Theorem \ref{thm:Main Theorem} in the sequel \cite{OedingTan} which investigates a connection between principal minors of symmetric matrices and the tangential variety to the Segre product of projective spaces and solves the set-theoretic version of a conjecture of Landsberg and Weyman \cite{LWtan}.


\subsection{Extended outline}
The rest of the paper is organized as follows. In Section~\ref{sec:intro:apps} we discuss applications of Theorem~\ref{thm:Main Theorem} to Statistics, Physics and Graph Theory.
In Section~\ref{sec:tensorreps} we recall basic notions concerning tensors, representations and $G$-varieties. We point out many standard facts from representation theory that we will use to study the ideal of $Z_{n}$ and the hyperdeterminantal module. In particular, we recall a method used by Landsberg and Manivel to study $G$-modules of polynomials via Schur modules.  We also show how to use weights and lowering operators to describe and identify Schur modules. We use these concepts in our proof of Lemma~\ref{lem:almost}.  Lemma~\ref{lem:almost} is the key to  Proposition~\ref{prop:no almost} which is crucial to our proof of Theorem~\ref{thm:Main Theorem}.

In Sections~\ref{sec:defs} through~\ref{sec:augment} we describe geometric aspects of the variety of principal minors of symmetric matrices and the zero set of the hyperdeterminantal module.
In Section~\ref{sec:defs} we set up notation and give a precise definition of the variety.  We also recall two useful facts; a symmetric matrix is determined up to the signs of its off-diagonal terms by its $1\times 1$ and $2 \times 2$ principal minors, and the dimension of $Z_{n}$ is $\binom{n+1}{2}$.  In Section~\ref{sec:structure} we describe the nested structure of $Z_{n}$.  In particular, in Proposition~\ref{prop:ZZ block} we show that $Z_{n}$ contains all possible Segre products of $Z_{p}$ and $Z_{q}$ where $p+q=n$.  We use this interpretation in Proposition~\ref{prop:no almost}.

In Section~\ref{sec:module} we study properties of the hyperdeterminantal module.  In particular, we point out that it has dimension $\binom{n}{3}5^{n-3}$.  In Proposition~\ref{prop:multiplicity} we show that it actually is an irreducible $\G$-module of polynomials that occurs with multiplicity $1$ in the space of degree $4$ homogeneous polynomials. This is a consequence of a more general fact about modules with structure similar to that of the hyperdeterminantal module, which we record in Lemma~\ref{lem:multiplicity:general}.  In Proposition~\ref{prop:one way} we record the fact (originally proved in \cite{HSt}) that the hyperdeterminantal module is in the ideal of $Z_{n}$. Then in Proposition~\ref{prop:unaugment} we generalize the idea to other varieties that have similar structure.

In Section~\ref{sec:augment} we extract a general property of the hyperdeterminantal module that we call augmentation. We explore properties of augmented modules via polarization of tensors, a technique from classical invariant theory used, for example, in the study of secant varieties.  Of particular interest is the Augmentation Lemma~\ref{lem:augment}, in which we give a geometric description of the zero set of a general augmented module.  We apply the Augmentation Lemma~\ref{lem:augment} to give a geometric characterization of the zero set of the hyperdeterminantal module in Lemma~\ref{lem:characterization}.  We use Lemma~\ref{lem:characterization} in the proof of Theorem~\ref{thm:Main Theorem}.  Proposition~\ref{prop:segre ideal} is another application of the Augmentation Lemma to polynomials that define Segre products of projective spaces. We use a slightly more complicated version of Proposition~\ref{prop:segre ideal} in the proof of Lemma~\ref{lem:almost}.

In Sections~\ref{sec:understanding} and~\ref{sec:theorem} we pull together all of the ideas from the previous parts to prove Theorem~\ref{thm:Main Theorem}. In particular, we show that any point in the zero set of the hyperdeterminantal module has a symmetric matrix that maps to it under the principal minor map. 

In Section~\ref{sec:understanding} we work to understand the case when all principal minors of a symmetric matrix agree with a given vector except possibly the determinant.
Of particular importance is Proposition~\ref{prop:no almost} which essentially says that for $n\geq 4$, if $z$ is a vector in the zero set of the hyperdeterminantal module, then a specific subset of the coordinates of $z$ determine the rest of its coordinates.

In order to prove Proposition~\ref{prop:no almost}, we use practically all of the tools from  representation theory that we have introduced and developed earlier in the paper.  With the aid of Proposition~\ref{prop:no almost}, we prove Theorem~\ref{thm:Main Theorem} in Section~\ref{sec:theorem}.


\section{Applications of Theorem~\ref{thm:Main Theorem}}\label{sec:intro:apps}
We conclude this introduction by describing how Theorem~\ref{thm:Main Theorem} answers questions in other areas via three examples; in Statistics and the study of negatively correlated random variables, in Physics and the study of determinantal point processes, and in Spectral Graph Theory  and the study of graph invariants.  \cite{HSc, HSt,GriffTsat2, borcea-2009-22, Holtz, Wagner, Mikkonen}

\subsection{Application to covariance of random variables}

Consider a non-singular real symmetric $n\times n$ matrix $A$.  The principal minors of $A$ can be interpreted as values of a function $\omega : \mathcal{P}(\{1,\dots,n\}) \rightarrow [0,\infty)$, where $\mathcal{P}$ is the power set. This function $\omega$, under various restrictions, is of interest to statisticians. In this setting, the off-diagonal entries of the matrix $A^{-1}$ are associated to covariances of random variables.  In D. Wagner's \cite{Wagner} asked the following: 

\begin{question} When is it possible to prescribe the principal minors of the matrix $A$ as well as the off-diagonal entries of $A^{-1}$?
\end{question}

In \cite[Theorem 6]{HSt} this question is answered using the hyperdeterminantal equations in degree 4, another set of degree 10 equations and the strict Hadamard-Fischer inequalities.

Our main result provides an answer to the first part of the question:

It is possible to prescribe the principal minors of a symmetric matrix if and only if the candidate principal minors satisfy all the relations given by the hyperdeterminantal module.

For the second part of the question we can give a partial answer. It is not hard to see that the off-diagonal entries of $A^{-1}$ are determined up to sign by the $0\times 0$, $1\times 1$ and $2 \times 2$ principal minors, and the rest of the principal minors further restrict the freedom in the choices of signs.

Another useful fact is if $A$ is invertible then 
\[
A^{-1} = \frac{\adj(A)}{\det(A)}
,\]
where $\adj(A)_{i,j} =  ( (-1)^{i+j} det(A^{j}_{i}))$ is the adjugate matrix.
So up to scale, the vector of principal minors of $A^{-1}$ is the vector of principal minors of $A$ in reverse order.  Therefore the determinant, $n-1 \times n-1$ and $n-2 \times n-2$ principal minors of $A$ determine the off diagonal entries of $A^{-1}$ up to $\binom{n}{2}$ choices in combinations of signs, and the rest of the principal minors further restrict the choices of combinations of signs.

\subsection{Application to determinantal point processes}
Determinantal point processes were introduced by Macchi in 1975, and subsequently have received significant attention in many areas. A non zero point $p_{S} \in \CC^{2^{n}}$ is called determinantal if there is an integer $m$ and an $(n+m)\times(n+m)$ matrix $K$ such that for $S\subset \{1,2,\dots,n\}$
\[
p_{S} = \det_{S\cup \{n+1,\dots,n+m\}}(K)
.\]
Borodin and Rains were able to completely classify all such points for the case $n=4$  (Theorem 4.6 \cite{BorodinRains}) by giving a nice geometric characterization.  Lin and Sturmfels \cite{LinSturmfels} studied the geometric and algebraic properties of the algebraic variety of determinantal points and independently arrived at the same result as Borodin and Rains. Moreover, Lin and Sturmfels gave a complete proof of the claim of \cite{BorodinRains} that the ideal of the variety is generated in degree 12 by 718 polynomials.

Consider the case where we impose the restrictions that the matrix $K$ to be symmetric and the integer $m=0$, and call these restricted determinantal points \emph{symmetric determinantal points}. 

\textbf{Restatement:}
The variety of all symmetric determinantal points is cut out set-theoretically by the hyperdeterminantal module.

This restatement is useful because it provides a complete list of necessary and sufficient conditions for determining which symmetric determinantal points can possibly exist.

\subsection{Application to spectral graph theory}
A standard construction in graph theory is the following.
To a weighted directed graph $\Gamma$ one can assign an adjacency matrix $\Delta(\Gamma)$.

The eigenvalues of $\Delta(\Gamma)$ are invariants of the graph. The first example is with the standard graph Laplacian.  Kirchoff's well-known Matrix--Tree theorem states that any $(n-1)\times( n-1)$ principal minor of $\Delta(\Gamma)$ counts the number of spanning trees of $\Gamma$.

There are many generalizations of the Matrix--Tree Theorem, such as the Matrix--Forest Theorem which states that $\Delta(\Gamma) ^{S}_{S}$, the principal minor of the graph Laplacian formed by omitting rows and columns indexed by the set $S \subset \{1,\dots,n\}$,  computes the number of spanning forests of $\Gamma$ rooted at vertices indexed by $S$.

The principal minors of the graph Laplacian are graph invariants. The relations among principal minors are then also relations among graph invariants.  Relations among graph invariants are central in the study of the theory of unlabeled graphs. In fact, Mikkonen holds that ``the most important problem in graph theory of unlabeled graphs is the problem of determining graphic values of arbitrary sets of graph invariants,''  (see \cite{Mikkonen} p.~1).
  
Theorem~\ref{thm:Main Theorem} gives relations among the graph invariants that come from principal minors, and in particular, since a graph can be reconstructed from a symmetric matrix, Theorem~\ref{thm:Main Theorem} implies the following:

\textbf{Restatement:}  There exists an undirected weighted graph $\Gamma$ with invariants $[v]\in \PP^{2^{n}-1}$ specified by the principal minors of a symmetric matrix $\Delta(\Gamma)$ if and only if $[v]$ is a zero of all the polynomials in the hyperdeterminantal module.

%


\section{Background on $G$-varieties in spaces of tensors and their ideals as $G$-modules}\label{sec:tensorreps}
An $n\times n$ matrix has $2^{n}$ principal minors (determinants of submatrices centered on the main diagonal), so vectors of principal minors may be considered  in the space $\CC^{2^{n}}$.
However, the natural ambient space for vectors of principal minors from the point of view of symmetry (Theorem \ref{thm:symmetry}) is the $n$-fold tensor product $\CC^{2}\otdot \CC^{2}$.  With this setting in mind, in this section we study tensor products of several vector spaces, natural group actions on tensors, representation theory for tensor products, and classical subvarieties in spaces of tensors.

For the sake of the reader not familiar with representation theory, we have chosen to include many definitions and basic concepts that we might have skipped otherwise. For more background, one may consult \cite{FultonHarris, LandsbergTensorBook,GoodWall, Weyl, Harris, CLO}. 

If $V$ is a vector space and $G\subset \GL(V)$, a variety $X\subset \PP V$ is said to be a \emph{$G$-variety} or \emph{$G$-invariant} if it is preserved by the action of $G$, specifically $g.x\in X$ for every $x\in X$ and $g\in G$.  In this article our vector spaces are always assumed to be finite dimensional.
Our study fits into the more general context of arbitrary $G$-varieties for a linearly reductive group $G$, and we sometimes allude to this setting, but for the sake of efficiency and clarity we often present the necessary representation-theoretic concepts only in the case of tensors.  The expert reader might try to envision the basic techniques we use in their more general context.

\subsection{Examples of classical $G$-varieties in spaces of tensors}
Let $V_{1},\dots,V_{n}$ be complex vector spaces and let $V_{1}\otdot V_{n}$ denote their tensor product.
The following are two classic examples of $G$-varieties in the space of tensors $\PP (V_{1}\otdot V_{n})$ which happen to show up in our study of the variety of principal minors of symmetric matrices.  These definitions can be found in many texts on algebraic geometry such as \cite{Harris}.

The space of all rank-one tensors (also called decomposable tensors) is the \emph{Segre variety}, defined by the embedding,
\begin{align*} Seg: \PP V_{1} \tdt \PP V_{n} &\longrightarrow \PP \left(V_{1}\otdot V_{n}\right) \\
 ([v_{1}],\dots,[v_{n}]) &\longmapsto  [v_{1}\otdot v_{n}]
.\end{align*}
$Seg\left(\PP V_{1}\tdt \PP V_{n} \right)$ is a $G$-variety for $G = \GL(V_{1})\tdt \GL(V_{n})$, moreover it is \emph{homogeneous} (the $G$-orbit of a single point) since $Seg\left(\PP V_{1}\tdt \PP V_{n} \right) = G.[v_{1}\otdot v_{n}]$. If $X_{1}\subset \PP V_{1},\dots, X_{n} \subset \PP V_{n}$ are varieties, let $Seg\left(X_{1}\tdt X_{n} \right)$ denote their Segre product.

The $r^{th}$ \emph{secant variety} to a variety $X \subset\PP V$, denoted $\sigma_{r}(X)$, is the Zariski closure of all embedded secant $\PP^{r-1}$'s to X, \emph{i.e.},
\[
\sigma_{r}(X) = \overline{\bigcup_{x_{1},\dots,x_{r} \in X} \PP( span\{x_{1},\dots,x_{r}\})}\subset \PP V
.\]
Secant varieties inherit the symmetry of the underlying variety.  In particular, 
\[\sigma_{r}\left(Seg\left(\PP V_{1}\tdt \PP V_{n} \right)\right)\] is a $G$-variety for $G = \GL(V_{1})\tdt \GL(V_{n})$.  However, homogeneity is not preserved in general.

\subsection{The variety of principal minors of symmetric matrices}\label{sec:defs}
Let $I = [i_{1},  \dots  i_{n}]$ be a binary multi-index, with $i_k \in \{0,1\}$ for $k = 1,  \dots , n$, and let $| I | = \sum_{k=1}^{n}i_{k}$. 
A natural basis of $(\CC^{2})^{\otimes n}$ is the set of tensors
 $X^{I} := x_{1}^{i_{1}}\otimes x_{2}^{i_{2}}\otimes  \dots \otimes x_{n}^{i_{n}}$ for all length $n$ binary indices $I$.
We use this basis to introduce coordinates; if 
$P= [C_{I}X^{I}] \in \PP (\CC^{2})^{\otimes n}$, the coefficients 
$C_{I}$ are the homogeneous coordinates of the point $P$. (Note we use the summation convention that the implied summation is over the index $I$ which appears as a superscript and a subscript.)

Let $S^{2}\CC^{n}$ denote the space of symmetric $n\times n$ matrices. If $A \in S^{2}\CC^{n}$, then let $\Delta_I(A)$ denote the principal minor of $A$ formed by taking the determinant of the principal submatrix of $A$ indexed by $I$ in the sense that the submatrix of $A$ is formed by including the $k^{th}$ row and column of $A$ whenever $i_{k} = 1$ and striking the $k^{th}$ row and column whenever $i_{k}=0$.

The projective variety of principal minors of $n\times n$ symmetric matrices, $Z_{n}$, is defined by the following rational map,
\begin{align*}
 \varphi   : \PP(S^{2} \CC^n \oplus  \CC)  & \dashrightarrow  \PP (\CC^{2})^{\otimes n} \\
 [A,t]  & \longmapsto  \left[t^{n-|I|}\Delta_{I}(A)\; X^I \right].
\end{align*}
The map $\varphi$ is defined on the open set where $t\neq0$. Moreover, $\varphi$ is homogeneous of degree $n$, so it is a well-defined rational map on projective space.  The $1\times 1$ principal minors of a matrix $A$ are the diagonal entries of $A =(a_{i,j})$, and if $A$ is a symmetric matrix, the $1\times 1$ and $2 \times 2$ principal minors determine the off-diagonal entries of $A$ up to sign in light of the equation
\[
a_{i,i}a_{j,j}-a_{i,j}^{2} = \Delta_{[0,\dots,0,1,0,\dots,0,1,\dots,0]}(A)
,\]
where the $1$'s in the index $[0,\dots,0,1,0,\dots,0,1,\dots,0]$ occur in positions $i$ and $j$.
So $\varphi$ is generically finite-to-one and $Z_{n}$ is a $\binom{n+1}{2}$-dimensional variety.  The affine map (on the set $\{t=1\}$) defines a closed subset of $\CC^{2^{n}}$, \cite{HSt}.

\subsection{Ideals of $G$-varieties in spaces of tensors}
Let $V$ be a finite dimensional vector space over $\CC$.
Let  $V^{*}$ denote the dual vector space of linear maps $V \rightarrow \CC$. 
Let $S^{d}V^{*}$ denote the space of homogeneous degree $d$ polynomials on $V$, and let $\Sym(V^{*}) = \bigoplus_{d}S^{d}V^{*}$ denote the polynomial ring.

If $X\subset \PP V$ is a projective algebraic variety, let $\I(X) \subset \Sym(V^{*})$ denote the ideal of polynomials vanishing on $X$, and let $\widehat{X}\subset V$ denote the cone over $X$. If $M$ is a set of polynomials, let $\V(M)$ denote its zero set.  Often algebraic varieties are given via an explicit parameterization by a rational map, but the vanishing ideal may be unknown.  A basic question in algebraic geometry is to find generators for the ideal of a given variety.  Though there are many known theoretical techniques, this remains a difficult practical problem.

\textbf{Fact:} $X$ is a $G$-variety if and only if $\I(X)$ is a $G$-module. This fact, which comes directly from the definitions, is a key observation because it allows us to use the representation theory of $G$-modules to study $\I(X)$.

By definition, all projective varieties are preserved by the action of $\CC\setminus\{0\}$ by rescaling. It is well know that this action induces a grading by degree on the ideal, $\I(X) = \bigoplus_{d}\I_{d}(X)$ where $\I_{d}X:=S^{d}(V^*)\cap \I(X)$.
In parallel, when a larger, linearly reductive group $G$ acts on $X$, we get a finer decomposition of each  module $\I_{d}(X)$ into a direct sum of irreducible $G$-modules.  The irreducible modules in $\I_{d}(X)$ are a subset of those in $S^{d}V^{*}$.  This simple observation leads to a useful ideal membership test, which is developed and discussed in \cite{LM04, LandsbergTensorBook}.

The group $\GL(V_{1})\tdt \GL(V_{n})$ acts on $V_{1}\otdot V_{n}$ by change of coordinates in each factor.  
%
%
When $V_{i}$ are all isomorphic, there is also a natural action of the symmetric group $\mathfrak{S}_{n}$ on $V_{1}\otdot V_{n}$   by permuting the factors.  
With this convention one may define a left action of the semi-direct product $\GL(V) \ltimes \mathfrak{S}_{n}$ on $V^{\otimes n}$.

If $V$ is a vector space and $G\subset GL(V)$, we say that is a \emph{$G$-module} or \emph{a representation of $G$}, if it is preserved by the action of $G\subset GL(V)$.
A $G$-module said to be  \emph{irreducible} if it has no non-trivial $G$-invariant subspaces.  

The general linear group $\GL(V)$ has well understood representation theory.  In particular \cite[Proposition 15.47]{FultonHarris} says that every $\GL(V)$-module is isomorphic to a Schur module of the form $S_{\pi}V $, where $\pi$ is a partition of an integer $d$.  We refer the reader to \cite{FultonHarris,LandsbergTensorBook} for general background on Schur modules.

Two common representations (in this language) are the space of symmetric tensors $S^{d}V = S_{(d)} V$ and the space of skew-symmetric tensors $\bigwedge^{d}V = S_{(1^{d})}V$, where $1^{d}$ denotes the partition $(1,\dots,1)$ with $1$ repeated $d$ times.  

We will be interested in representations of $\SL(V)$.  In light of the isomorphism $\SL(V) \cong \GL(V)/Z(\GL(V))$, where the center $Z(\GL(V)) = \CC\setminus\{0\}$ is isomorphic to scalar multiples of the identity, the representation theory of $\GL(V)$ is essentially the same at that of $\SL(V)$.  Specifically, if $V$ is $m$-dimensional, two representations  $S_{\pi}V$ and $S_{\lambda}V$ of $\GL(V)$ are isomorphic as $\SL(V)$ modules if $\pi = \lambda + k^{m}$, some $k\in \ZZ$, where $k^{m}$ is the partition $(k,\dots,k)$ with $k$ repeated $m$ times.  
However, since we care about how the modules we are studying are embedded in the space of polynomials, we will not reduce partitions via this equivalence.

%

We are interested in the case when $X \subset \PP (V_{1}\otdot V_{n})$ is a variety in a space of tensors, and $X$ is invariant under the action of $G= \GL(V_{1}) \tdt  \GL(V_{n})$. 
To study $\I_d(X)$ as a $G$-module, we need to understand how to decompose the space of homogeneous degree $d$ polynomials $S^d(V_{1}^*\otdot V_{n}^*)$ into a direct sum of irreducible $G$-modules.  This is a standard computation in representation theory, which has been made explicit for example in \cite{LM04}.

\begin{prop}[Landsberg--Manivel \cite{LM04} Proposition 4.1]\label{LMdecomp}
 Let $V_1, \dots , V_n$ be vector spaces and let $G= GL(V_1)\tdt GL(V_n)$. Then the following decomposition as a direct sum of irreducible $G$-modules holds:
\[S^d(V_{1}^{*}\otimes\dots\otimes V_{n}^{*}) = \bigoplus _{|\pi_1|=\dots=|\pi_n|=d} ([\pi_1]\otimes\dots\otimes[\pi_n])^{\mathfrak{S}_d} \otimes S_{\pi_1}V_{1}^{*} \otimes\dots\otimes S_{\pi_n} V_{n}^{*}
\]
where $[\pi_{i}]$ are representations of the symmetric group $\mathfrak{S}_{d}$ indexed by partitions $\pi_{i}$ of $d$, and $([\pi_1]\otimes\dots\otimes[\pi_n])^{\mathfrak{S}_d}$ denotes the space of $\mathfrak{S}_d$-invariants (\ie, instances of the trivial representation) in the tensor product.
\end{prop}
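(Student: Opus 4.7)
The plan is to derive the decomposition from Schur--Weyl duality applied separately to each tensor factor, after first permuting the factors in an $\mathfrak{S}_{d}$-equivariant way.

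First, I would use the characteristic-zero identification $S^{d}W = (W^{\otimes d})^{\mathfrak{S}_{d}}$ for any vector space $W$, applied to $W = V_{1}^{*}\otdot V_{n}^{*}$. This rewrites the left-hand side as a space of $\mathfrak{S}_{d}$-invariants inside a tensor power. Next, I would exhibit an $\mathfrak{S}_{d}\times G$-equivariant isomorphism
\[(V_{1}^{*}\otdot V_{n}^{*})^{\otimes d}\;\cong\;(V_{1}^{*})^{\otimes d}\otdot (V_{n}^{*})^{\otimes d},\]
obtained by regrouping the $nd$ factors so that for each $i$ the $d$ copies of $V_{i}^{*}$ are collected together. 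Under this identification, the diagonal $\mathfrak{S}_{d}$-action on the left corresponds to the action on the right that simultaneously permutes the $d$ factors within each of the $n$ blocks, while $G = \GL(V_{1})\tdt \GL(V_{n})$ acts factor by factor.

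Second, I would apply Schur--Weyl duality to each block to obtain the $\mathfrak{S}_{d}\times \GL(V_{i})$-module decomposition
\[(V_{i}^{*})^{\otimes d}\;=\;\bigoplus_{|\pi_{i}|=d}[\pi_{i}]\otimes S_{\pi_{i}}V_{i}^{*}.\]
Distributing tensor products over the direct sums yields
\[(V_{1}^{*}\otdot V_{n}^{*})^{\otimes d}\;=\;\bigoplus_{|\pi_{1}|=\dots=|\pi_{n}|=d}\bigl([\pi_{1}]\otdot[\pi_{n}]\bigr)\otimes \bigl(S_{\pi_{1}}V_{1}^{*}\otdot S_{\pi_{n}}V_{n}^{*}\bigr),\]
with $\mathfrak{S}_{d}$ acting diagonally on the leading factor and trivially on the Schur modules, and with $G$ acting only on the Schur modules. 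Finally, taking $\mathfrak{S}_{d}$-invariants commutes with direct sums and (since the $G$- and $\mathfrak{S}_{d}$-actions commute) with the $G$-action, producing exactly the stated decomposition with multiplicity space $([\pi_{1}]\otdot[\pi_{n}])^{\mathfrak{S}_{d}}$.

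The main step requiring care is the equivariant regrouping isomorphism: one must verify that the permutation of tensor factors carries the diagonal $\mathfrak{S}_{d}$-action on $(V_{1}^{*}\otdot V_{n}^{*})^{\otimes d}$ precisely to the simultaneous action on the blocks. This is pure bookkeeping, and no deep representation theory beyond Schur--Weyl duality is required. The conceptual content of the statement is simply that, since $G$ acts trivially on the $[\pi_{i}]$ factors, the $G$-isotypic component $S_{\pi_{1}}V_{1}^{*}\otdot S_{\pi_{n}}V_{n}^{*}$ appears with multiplicity equal to the dimension of the space of $\mathfrak{S}_{d}$-intertwiners among the $[\pi_{i}]$, i.e., the $\mathfrak{S}_{d}$-invariants in $[\pi_{1}]\otdot[\pi_{n}]$.
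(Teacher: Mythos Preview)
Your argument is correct and is precisely the standard derivation via Schur--Weyl duality. Note, however, that the paper does not give its own proof of this proposition: it is quoted from \cite{LM04} as a known result, so there is no in-paper proof to compare against. Your write-up matches the usual proof one finds in the literature (and presumably in the cited source).
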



When the vector spaces $V_{i}^{*}$, are all isomorphic to the same vector space $V^{*}$,  Proposition~\ref{LMdecomp} specializes to give the following decomposition formula (as $GL(V)\tdt GL(V)$-modules) also found in \cite{LM04}:
\begin{equation}\label{eq:decomp}
S^d(V^{*}\otimes\dots\otimes V^{*}) = \bigoplus _{|\pi_1|=\dots=|\pi_n|=d}  \left(S_{\pi_1}V^{*} \otimes\dots\otimes S_{\pi_n} V^{*} \right)^{\oplus N_{\pi_{1},\dots,\pi_{k}}}
,\end{equation}
where the multiplicity $N_{\pi_{1},\dots,\pi_{k}}$ can be computed via characters.  The modules $(S_{\pi_1}V^{*} \otimes\dots\otimes S_{\pi_n} V^{*})^{\oplus N_{\pi_{1},\dots,\pi_{k}}}
$ are called \emph{isotypic components}.
 
The irreducible $\SL(V)^{\times n}\ltimes \mathfrak{S}_{n}$-modules are constructed by taking an irreducible $\SL(2)^{\times n}$ module $S_{\pi_{1}}V \otdot S_{\pi_{n}} V$ and summing over all permutations in $\mathfrak{S}_{n}$ that yield non-redundant modules. When the vector space is understood, we denote this compactly as
\begin{equation*}
S_{\pi_{1}}S_{\pi_{2}}\dots S_{\pi_{n}} := \sum_{\sigma \in \mathfrak{S}_{n}} S_{\pi_{\sigma(1)}}V^{*} \otdot S_{\pi_{\sigma(n)}} V^{*}
\end{equation*}

The decomposition formula \eqref{eq:decomp} is essential for understanding the structure of the ideals of $G$-varieties. There is an implementation of \eqref{eq:decomp} in the computer program LiE, and we wrote an implementation in Maple.

The combinatorial description of Schur modules in terms of collections of partitions can be used to construct polynomials in spaces of tensors.  We refer the reader to \cite{LM04, LandsbergTensorBook} for a complete explanation. A copy of our implementation of these algorithms may be obtained by contacting the author.

\subsection{Weights, raising operators, and highest weight vectors}\label{sec:WeightSpaces} 
The notions of \emph{weights}, \emph{weight vectors}, \emph{highest weight vectors}, and \emph{raising/lowering operators} are well-known practical tools for studying representations and polynomials in spaces of tensors.
Here we recall definitions and concepts that can be found in standard textbooks on representation theory in order to define the terms we use in this paper and to explain our use of these representation-theoretic tools.
Many of the concepts in this section are practical re-interpretations of concepts in the previous section.


Choose a basis $\{x_{i}^{0},x_{i}^{1}\}$ for each $V_{i}$ and assign the integer \emph{weight} $-1$ to $x_{i}^{0}$ and the weight $+1$ to $x_{i}^{1}$.
Weights of tensors in the algebra $(V_{1}\otdot V_{n})^{\otimes}$ are length-$n$ integer vectors defined first on monomials then extended by linearity. Specifically,
\[
(x_{1}^{0})^{\otimes p_{1}} \otimes (x_{1}^{1})^{\otimes q_{1}} \otimes (x_{2}^{0})^{\otimes p_{2}} \otimes (x_{2}^{1})^{\otimes q_{2}} \otdot (x_{n}^{0})^{\otimes p_{n}} \otimes (x_{n}^{1})^{\otimes q_{n}}
\]
has weight
\[ (q_{1}-p_{1},q_{2}-p_{2},\dots,q_{n}-p_{n}).\]
A tensor is called a \emph{weight vector} if all of its monomials have the same weight, and this is the only time it makes sense to assign a weight to a tensor.
This is the standard assignment of weights for the connected component containing the identity in $\G$, and is also known as grading by multi-degree.
  

The Lie algebra $\mathfrak{g}$ associated to the Lie group $G$ acts on $G$-modules by derivation. We assume that $G$ is a linearly reductive connected algebraic group, of which $\SL(V)^{\times n}$ is an example.
 An essential fact we will use is that  $M$ is a $G$-module if and only if $M$ is a $\mathfrak{g}$-module.  We have a decomposition $\mathfrak{g}= \mathfrak{g_{-}} \oplus \mathfrak{g_{0}} \oplus \mathfrak{g_{+}}$ into the lowering operators, the Cartan (Abelian) subalgebra and the raising operators.  

The Lie algebra of $SL(2)$ is $\mathfrak{sl}(2)$,  the algebra of traceless $2\times 2$ matrices acting as derivations.  The raising (respectively lowering) operators can be thought of as upper (respectively lower) triangular matrices when 
For example the lowering operator in $\mathfrak{sl}_{2}$ acts on $V = \{x_{0},x_{1}\}$ by sending $x_{0}$ to a scalar multiple of $x_{1}$ and sending $x_{1}$ to $0$.  

The Lie algebra of $SL(2)^{\times n}$ is $\mathfrak{sl}_{2}^{\oplus n}$ where each $\mathfrak{sl}_{2}$ acts on a single factor of the tensor product $V_{1}\otdot V_{n}$.
This action is extended to $S^{d}(V_{1}\otdot V_{n})$ by noting that the differential operators obey the Leibnitz rule.  The raising (lowering) operators fix the degree of a polynomial.

A weight vector in a $G$-module is called a \emph{highest weight vector} (respectively \emph{lowest weight vector}) if it is in the kernel of all of the raising (respectively lowering) operators.
Consider the irreducible module $S_{\pi_{1}}V_{1}^{}\otdot S_{\pi_{n}}V_{n}$ with each $\pi_{i}$ a partition of $d$.
Since $V_{i}\simeq \CC^{2}$ for every $1\leq i\leq n$, each $\pi_{i}$ is of the form $(\pi_{i}^{1},\pi_{i}^{2})$ with $\pi_{i}^{1}+\pi_{i}^{2} = d$.  A highest weight vector in $S_{\pi_{1}}V_{1}\otdot S_{\pi_{n}}V_{n}$ has weight $(\pi_{1}^{2}-\pi_{1}^{1},\pi_{2}^{2}-\pi_{2}^{1},\dots,\pi_{n}^{2}-\pi_{n}^{1})$.
If $w$ is the weight of a nonzero vector in $S_{\pi_{1}}V_{1}\otdot S_{\pi_{n}}V_{n}$ then $-w$ is also the weight of a nonzero vector, and if $w$ is the weight of a highest weight vector in a module then $-w$ is the weight of a lowest weight vector.

\textbf{Fact:} Assume $G$ is a linearly reductive connected algebraic group. Each finite dimensional irreducible $G$-module is the span of the $G$-orbit of a highest (or lowest) weight vector. 

\begin{remark}\label{rmk:transition}
If $T$ is a nonzero homogeneous polynomial on $V_{1}\otdot V_{n}$, and $T$ is a highest (or lowest) weight vector, then the degree $d$ and weight $(w_{1},w_{2},\dots,w_{n})$ of $T$ is sufficient information to determine (up to isomorphism) a module of the form $S_{\pi_{1}}V_{1}\otdot S_{\pi_{n}}V_{n}$ in which it occurs. We say that we know in which isotypic component the module lives. Specifically, we have $d = \pi_{i}^{1}+\pi_{i}^{2}$ and $w_{i} = \pi_{i}^{2}-\pi_{i}^{1}$, so $\pi_{i} = \frac{1}{2}(d-w_{i}, d+w_{i})$.

In general, the degree and weight of a highest weight polynomial will not be sufficient to find how the module $S_{\pi_{1}}V_{1}^{*}\otdot S_{\pi_{n}}V_{n}^{*}$ is embedded in $S^{d}(V_{1}^{*}\otdot V_{n}^{*})$ (\ie how it is embedded in the isotypic component).  On the other hand, if the found module occurs with multiplicity one in $S^{d}(V_{1}^{*}\otdot V_{n}^{*})$, then the degree and weight of a highest weight vector is sufficient information to identify the module.

\begin{example}\label{rmk:HDweight}
The hyperdeterminant of format $2 \times 2 \times 2$ is invariant under the action of $\SL(2)\times \SL(2) \times \SL(2)$, therefore it must have weight $(0,0,0)$.  This, together with the knowledge that it is a degree $4$ polynomial annihilated by each raising operator immediately tells us that it must be in the module $S_{(2,2)}\CC^{2}\otimes S_{(2,2)}\CC^{2}\otimes S_{(2,2)}\CC^{2}$ which occurs with multiplicity one in $S^{4}(\CC^{2}\otimes \CC^{2} \otimes \CC^{2})$.  Moreover, one can write the $2 \times 2 \times 2$ hyperdeterminant on the variables $X^{[i_{1},i_{2},i_{3},0,\dots,0]}$.  The weight of this polynomial is $(0,0,0,-4,\dots,-4)$ and it is a highest weight vector, therefore the span of its $\G$-orbit is the hyperdeterminantal module,
\[
HD := S_{(2,2)}S_{(2,2)}S_{(2,2)}S_{(4)}\dots S_{(4)}
.\]
\end{example}
\end{remark}

\subsection{An algorithm to produce a $G$-module from a polynomial}\label{sec:lowering}
Suppose we can write down a polynomial $h$ in some (unknown) $G$-module $M$. (Again we are assuming that our modules are finite dimensional and the group $G$ is a linearly reductive connected algebraic group, and specifically thinking of the example $G=\SL(V)^{\times n}$.) Since $M$ is a $G$-module, it is also a $\mathfrak{g}$-module, where $\mathfrak{g}$ is the Lie algebra associated to the Lie group $G$. The following algorithm is a standard idea in representation theory and can be used to find more polynomials in $M$, and in fact we will find submodules of $M$.
In particular, this procedure is essential in the proof of Lemma~\ref{lem:almost} below.

By successively applying lowering operators, we will determine the lowest weight space in which a summand of $h$ can live.  The lowest weight vector that we construct will generate a submodule of $M$.

\noindent
\textbf{Input:} $h\in M$.

\textbf{Step $0$.} Choose an ordered basis of lowering operators $\mathfrak{g}_{-} = \{\alpha_{1}, \dots ,\alpha_{n}\}$.

\textbf{Step $1$.} Find the largest integer $k_{1} \geq 0$ so that  $\alpha_{1}^{k_{1}}.h \neq 0$, and let $h^{(1)} = \alpha_{1}^{k_{1}}.h $.

\textbf{Step $2$.} Find the largest integer $k_{2} \geq 0$ so that  $\alpha_{2}^{k_{2}}.h^{(1)} \neq 0$, and let $h^{(2)} = \alpha_{2}^{k_{2}}.h^{(1)} $.

\textbf{Step $n$.} Find the largest integer $k_{n} \geq 0$ so that $\alpha_{n}^{k_{n}}.h^{(n-1)} \neq 0$, and let $h^{(n)} = \alpha_{n}^{k_{n}}.h^{(n-1)} $.

\noindent
\textbf{Output:} The vector $h^{(n)}$ is a lowest weight vector in $M$ and $ span\{G.h^{(n)}\}$ is a submodule of $M$ containing $h^{(n)}$. 

Note, in the case $\mathfrak{g} = \mathfrak{sl}(2)^{\oplus n}$, the natural ordered basis of  $(\mathfrak{sl}(2)^{\oplus n})_{-}$ is $ \{\alpha_{1}, \dots ,\alpha_{n}\}$, where $\alpha_{i}$ is the lowering operator acting on the $V_{i}^{*}$ factor in $S^{d}(V_{1}^{*}\otdot V_{n}^{*})$.

\begin{remark}\label{rmk:weightbasis}
In the case that $M$ is irreducible, by the same procedure of applying lowering operators to (this time) a highest weight vector $h$, we can construct a \emph{weight basis} $M$, namely a basis of $M$ consisting of weight vectors in $M$ of every possible weight.
\end{remark}

%




\section{The nested structure of $Z_{n}$ via Segre products}\label{sec:structure}

\begin{prop}\label{prop:subvar}
The variety $ Seg(Z_{(n-1)} \times \PP V_{n})$ is a subvariety of $Z_{n}$. In particular, any point of $Seg(Z_{(n-1)} \times \PP V_{n})$ is, after a possible change of coordinates, the principal minors of an $(n-1)\times (n-1)$ block of an $n\times n$ matrix.
\end{prop}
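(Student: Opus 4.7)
The plan is to realize every tensor $u \otimes v \in Seg(Z_{n-1} \times \PP V_n)$ as $\varphi_n([A,t])$ for an explicit block-diagonal symmetric $n \times n$ matrix whose upper-left $(n-1) \times (n-1)$ block encodes $u$ and whose bottom-right entry encodes $v$. This makes the inclusion and the ``in particular'' clause fall out of the same construction.

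First I would work on the Zariski-dense open subset where the parametrizations behave well: choose $[A', t'] \in \PP(S^2\CC^{n-1} \oplus \CC)$ with $\varphi_{n-1}([A', t']) = u$ and $t' \neq 0$, and write $v = [v_0 x_n^0 + v_1 x_n^1]$ with $v_0 \neq 0$. The proposed candidate is
\[
A \,:=\, \begin{pmatrix} A' & 0 \\ 0 & t'v_1/v_0 \end{pmatrix} \in S^2\CC^n,
\]
paired with $t := t'$.

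Next I would verify $\varphi_n([A,t']) = u \otimes v$ directly. Because $A$ is block-diagonal, cofactor expansion along the last row/column gives $\Delta_I(A) = (t'v_1/v_0)^{i_n}\Delta_{I'}(A')$ for each multi-index $I = (I', i_n)$. Substituting into $\varphi_n([A,t'])_I = (t')^{n-|I|}\Delta_I(A)$ and pulling out the uniform factor $t'/v_0$ leaves the coordinate $v_{i_n}(t')^{n-1-|I'|}\Delta_{I'}(A') = v_{i_n}\, u_{I'}$, which matches $u \otimes v$ coordinate-by-coordinate. The boundary cases where $v_0 = 0$ or $u$ lies outside the image of $\varphi_{n-1}|_{t'\neq 0}$ then follow from Zariski closure: the construction already covers a dense subset of $Z_{n-1} \times \PP V_n$, and its Segre image lies in the closed variety $Z_n$.

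The ``in particular'' clause is read off the construction: $A$ literally contains $A'$ as its upper-left $(n-1)\times(n-1)$ principal block, and $u$ is the vector of principal minors of that block, packaged through $\varphi_{n-1}$. A permutation of tensor factors by an element of $\mathfrak{S}_n \subset G$ then lets the block occupy any set of $n-1$ coordinate positions, which is the ``change of coordinates'' allowed in the statement. I do not foresee a serious obstacle; the only care needed is tracking the projective rescaling that forces the factor $t'/v_0$ into the bottom-right entry of $A$.
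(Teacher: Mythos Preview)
Your proposal is correct. The computation that $\varphi_n([A,t'])$ agrees projectively with $u\otimes v$ (with global scalar $t'/v_0$) is right, and the Zariski-closure step handles the boundary loci $\{v_0=0\}$ and $\{t'=0\}$ since $Z_n$ is closed and the locus you parametrize is dense in $Seg(Z_{n-1}\times\PP V_n)$.

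The paper's own argument is organized differently: rather than building a block matrix with a general bottom-right entry, it first invokes the $\SL(V_n)$-invariance of $Z_n$ (Theorem~\ref{thm:symmetry}) to move $v$ to $x_n^0$, so that only the special matrix $\left(\begin{smallmatrix}P&0\\0&0\end{smallmatrix}\right)$ is needed; the full Segre product is then recovered as the $\SL(V_n)$-orbit of $Seg(Z_{n-1}\times\PP\{x_n^0\})$. Your route avoids appealing to the $G$-action and instead does the coordinate check directly, then closes up---this is essentially the method the paper uses later for the more general Proposition~\ref{prop:ZZ block}, specialized to $(p,q)=(n-1,1)$. The paper's version is a bit cleaner and highlights the symmetry theme; yours is more self-contained. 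One small remark on the ``in particular'' clause: the paper reads it as saying that after an $\SL(V_n)$-change of coordinates the point literally becomes $\eta\otimes x_n^0$ (the principal minors of the $(n-1)\times(n-1)$ block, embedded via $\otimes\, x_n^0$), not as a statement about permuting which $n-1$ coordinates the block occupies. Your construction still supports that reading, since specializing $v\mapsto x_n^0$ in your formula recovers exactly the matrix with zero in the bottom-right.
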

\begin{proof}
We prove the second statement first. Let $[\eta\otimes v]$ be a point in $Seg(Z_{(n-1)} \times \PP V_{n})$.
Then change coordinates in $V_{n}$ to send $[\eta\otimes v]$ to $[\eta\otimes x_{n}^{0}]$. 
Now $[\eta\otimes x_{n}^{0}]$ is in $Seg(Z_{(n-1)} \times \PP\{x_{n}^{0}\})$ which is the image under $\phi$ of matrices of the form
\[
\left[
\left(\begin{array}{cc}
P & 0 \\ 0 & 0
\end{array}\right)
,t\right], 
\]
where $P$ is a symmetric $(n-1)\times(n-1)$ sub-matrix of an $n\times n$ symmetric matrix.

The first statement then follows immediately from the $\G$-invariance of $Z_{n}$ and the fact that the $\SL(V_{n})$ orbit of $Seg(Z_{(n-1)} \times \PP\{x_{n}^{0}\})$ is $Seg(Z_{(n-1)} \times \PP V_{n})$.
 \end{proof}

In fact, Proposition~\ref{prop:subvar} generalizes as follows.
\begin{prop}\label{prop:ZZ block} Let $p+q = n$ and $Z_{p}\subset \PP \left(V_{1}\otdot V_{p} \right)$ and $Z_{q}\subset \PP \left(V_{p+1}\otdot V_{n}\right)$. Then $Seg(Z_p \times Z_q)$ is a subvariety of $ Z_{n}$.

Let $U_{0}= \{[z] \in \PP(V_{1}\otdot V_{n})\mid z = z_{I}X^{I}\in V_{1}\otdot V_{n}, z_{[0, \dots ,0]} \neq 0 \}$. Then $\varphi([A,t]) \in Seg(Z_{p}\times Z_{q}) \cap U_{0}$,  if and only if $A$ is of the form
\[\left( \begin{array}{cc} P & 0 \\ 0 & Q \end{array} \right)
,\] where $P\in S^{2}\CC^{p}$ and $Q\in S^{2}\CC ^{q}$. 
\end{prop}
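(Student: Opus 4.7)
The plan is to verify both claims by directly expanding principal minors of block diagonal matrices and then using the rigidity provided by $2\times 2$ principal minors recalled in Section~\ref{sec:defs}.

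For the containment $Seg(Z_p \times Z_q)\subseteq Z_n$ and the ``if'' direction of the biconditional, I would start with a block diagonal matrix
\[
A = \left(\begin{array}{cc} P & 0 \\ 0 & Q\end{array}\right),\qquad P\in S^2\CC^p,\ Q\in S^2\CC^q.
\]
For any binary multi-index $I=[i_1,\dots,i_n]$, split $I=(I',I'')$ with $I'=[i_1,\dots,i_p]$ and $I''=[i_{p+1},\dots,i_n]$. The principal submatrix of $A$ indexed by $I$ is itself block diagonal, so $\Delta_I(A)=\Delta_{I'}(P)\Delta_{I''}(Q)$, and $n-|I|=(p-|I'|)+(q-|I''|)$. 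Hence
\[
\varphi([A,t])_I = t^{n-|I|}\Delta_I(A) = \bigl(t^{p-|I'|}\Delta_{I'}(P)\bigr)\bigl(t^{q-|I''|}\Delta_{I''}(Q)\bigr),
\]
which exhibits $\varphi([A,t])$ as the Segre product of the images of $[P,t]$ in $Z_p$ and of $[Q,t]$ in $Z_q$. Such tensor products are dense in $Seg(Z_p\times Z_q)$ and $Z_n$ is closed, so $Seg(Z_p\times Z_q)\subseteq Z_n$, and the image lies in $U_0$ precisely when $t\neq 0$.

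For the converse, suppose $\varphi([A,t]) \in Seg(Z_p\times Z_q)\cap U_0$. Membership in $U_0$ forces $t^n\neq 0$, so rescale to $t=1$ and choose representatives $\eta,\zeta$ of the Segre factors normalized by $\eta_{[0,\dots,0]}=\zeta_{[0,\dots,0]}=1$ (exploiting the freedom $(\eta,\zeta)\sim(\lambda\eta,\lambda^{-1}\zeta)$). Then $\Delta_I(A)=\eta_{I'}\zeta_{I''}$ for every $I$. The singleton minors then yield $a_{ii}=\eta_{I'}$ for $i\leq p$ (with $I'$ having its only $1$ in position $i$ and $I''=0$) and, symmetrically, $a_{jj}=\zeta_{I''}$ for $j>p$. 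For any $i\leq p<j$, the cross-block $2\times 2$ principal minor therefore satisfies
\[
a_{ii}a_{jj}-a_{ij}^2 = \Delta_I(A) = \eta_{I'}\zeta_{I''} = a_{ii}a_{jj},
\]
where $I$ has its only $1$s in positions $i$ and $j$. Hence $a_{ij}^2=0$, so $a_{ij}=0$, and every off-diagonal entry of $A$ straddling the split between the first $p$ and last $q$ indices vanishes. Thus $A$ is block diagonal, as claimed.

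The proof is short: once one notices that the tensor factorization on $U_0$ controls the cross-block off-diagonal entries through the affine identity $a_{ii}a_{jj}-a_{ij}^2=\Delta_{\{i,j\}}(A)$, the rest is scaling bookkeeping. The main care points will be normalizing $t$ and the Segre rescaling so that the equations $\Delta_I(A)=\eta_{I'}\zeta_{I''}$ hold on the nose, and invoking closure in the definition of $Z_n$ to upgrade the containment from the parameterized image to all of $Seg(Z_p\times Z_q)$. I do not anticipate any genuine obstacle beyond this.
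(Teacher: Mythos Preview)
Your argument is correct. The forward computation, exhibiting $\varphi([A,t])$ as the Segre product of $\varphi^{p}([P,t])$ and $\varphi^{q}([Q,t])$ via the block-diagonal determinant identity $\Delta_I(A)=\Delta_{I'}(P)\Delta_{I''}(Q)$, is exactly what the paper does. Where you diverge is in two places. For the containment $Seg(Z_p\times Z_q)\subset Z_n$ you pass to the closure (the affine image is dense in each $Z_k$, and $Z_n$ is closed), whereas the paper treats the boundary cases $r=0$ or $s=0$ by hand via Proposition~\ref{prop:subvar}. For the converse on $U_0$, you read off $a_{ij}=0$ for $i\le p<j$ directly from the cross-block $2\times 2$ identity $a_{ii}a_{jj}-a_{ij}^2=\eta_{I'}\zeta_{I''}=a_{ii}a_{jj}$; the paper instead first exhibits a block-diagonal preimage and then invokes the fact (Section~\ref{sec:defs}) that a symmetric matrix is determined up to signs of its off-diagonal entries by its $1\times1$ and $2\times 2$ principal minors, forcing any other preimage to share the block form. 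Your route is slightly more self-contained; the paper's route reuses a fact already stated and Proposition~\ref{prop:subvar}. Both are short and equivalent in strength.
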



\begin{proof}
Let $\varphi^{i}$ denote the principal minor map on $i\times i$ matrices and let $J$ and $K$ be (respectively) multi-indices of length $p$ and $q$.
 Let $[x\otimes y] \in Seg(Z_p \times Z_q)$ be such that $[x]=\varphi^p([P,r]) = [r^{p-|J|}\Delta_{J}(P) X^{J}]$ and $[y]=\varphi^q([Q,s]) = [s^{q-|K|}\Delta_{K}(Q) X^{K}]$, with $P\in S^{2}\CC^{p}$ and $Q\in S^{2}\CC^{q}$. 
 
Notice that if $r=0$, then $[x] = [0,\dots,0,det(P)] \in Seg(\PP V_{1}\tdt \PP V_{p})$, and similarly if $s=0$, then $[y] = [0,\dots,0,det(Q)] \in Seg(\PP V_{p+1}\tdt \PP V_{p+q})$. So the cases that $r=0$ or $s=0$ are covered by iterations of Proposition~\ref{prop:subvar}.

Now assume $r\neq 0,s\neq 0$ so we can set $r=s=1$. Consider a blocked matrix of the form
\begin{equation}\label{block matrix} A =
\left(\begin{array}{cc}
P & 0 \\ 0 & Q 
\end{array}\right)
,
\end{equation} 
where $P\in S^{2}\CC^{p}$ and $Q\in S^{2}\CC^{q}$.
We claim that $\varphi^{p+q}([A,1] ) = [x\otimes y]$.
The determinant of a block diagonal matrix is the product of the determinants of the blocks, and principal submatrices of block diagonal matrices are still block diagonal, so
\[ \varphi^{n}([A,1] ) = \left[ \Delta_{J}(P) \Delta_{K}(Q)\; X^{J,K} \right]  .\]
where $X^{J,K} = X^J\otimes X^K$. But we can reorder the terms in the product to find 
\[
 \left[ \Delta_{J}(P) \Delta_{K}(Q) X^{J,K} \right]  = \left[\left(\Delta_{J}(P)X^J\right)  \otimes \left(\Delta_{K}(Q)X^K\right)  \right] = [x\otimes y]
.\]
 
For the second statement in the proposition, notice that for $[x\otimes y] \in Seg(Z_{p}\times Z_{q}) \cap U_{0}$, we have exhibited a matrix $A$ as in \eqref{block matrix} such that $\varphi^{n}([A,1]) = [x\otimes y]$.  But symmetric matrices are determined up to sign by their $1\times 1$ and $2 \times 2$ principal minors. Any other matrix must have the same blocked form as the one in \eqref{block matrix}.
\end{proof}

\begin{remark}
Proposition~\ref{prop:ZZ block} gives a useful tool in finding candidate modules for $I(Z_{n})$:  We are forced to consider 
\[I(Z_{n})\subset \bigcap_{\begin{array}{c}p+q=n \\ p,q\geq 1 \end{array}} I(Seg(Z_{p}\times Z_{q})).\]
\end{remark}

\section{Properties of the hyperdeterminantal module}\label{sec:module}
As a consequence of Theorem~\ref{thm:symmetry}, the defining ideal of $Z_{n}$, $\I(Z_{n}) \subset \Sym(V_{1}^{*}\otdot V_{n}^{*})$, is a $\G$-module. As mentioned above, we will consider the $\G$-module $HD =S_{(2,2)} S_{(2,2)} S_{(2,2)} S_{(4)} \dots  S_{(4)}$ (called the hyperdeterminantal module in \cite{HSt}).  In this section we compute the dimension of the hyperdeterminantal module and show that it occurs with multiplicity one in $S^{4}(V_{1}^{*}\otdot V_{n}^{*})$. Also, in the course of our observations, we arrive at a practical ideal membership test for a class of varieties that includes the variety of principal minors.

\begin{obs}
 The module $S_{(2,2)}\CC^{2}$ is 1-dimensional and the module $S_{(4)}\CC^{2}$ is $5$-dimen\-sional and therefore the dimension of the hyperdeterminantal module is
  \[\dim( S_{(2,2)} S_{(2,2)} S_{(2,2)} S_{(4)} \dots  S_{(4)})= \binom{n}{3}5^{n-3}.\]
\end{obs}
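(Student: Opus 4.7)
The statement is a routine dimension count once the individual Schur-module dimensions are in hand and once one unpacks the shorthand $S_{\pi_1}S_{\pi_2}\cdots S_{\pi_n}$ introduced just before the Observation. I would organize the plan in four short steps.

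First I would verify the two base dimensions. For $S_{(2,2)}\CC^{2}$, I would note that it is the one-dimensional determinantal representation $(\bigwedge^{2}\CC^{2})^{\otimes 2}$; equivalently, by the Weyl dimension formula (or by counting semistandard Young tableaux of shape $(2,2)$ filled from $\{1,2\}$: only the tableau with rows $(1,1)$ and $(2,2)$ is admissible), the dimension is $1$. For $S_{(4)}\CC^{2}=\Sym^{4}\CC^{2}$, the dimension is $\binom{4+1}{4}=5$, which again matches the count of semistandard tableaux of shape $(4)$ with entries in $\{1,2\}$.

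Next I would expand the definition
\[
S_{\pi_{1}}S_{\pi_{2}}\cdots S_{\pi_{n}} \;=\; \sum_{\sigma\in\mathfrak{S}_{n}} S_{\pi_{\sigma(1)}}V^{*}\otimes\cdots\otimes S_{\pi_{\sigma(n)}}V^{*}
\]
given in Section~\ref{sec:tensorreps}, where the sum is taken over only the non-redundant permutations. In our setting three of the $\pi_{i}$ equal $(2,2)$ and $n-3$ equal $(4)$, so the non-redundant orderings are indexed by the choice of the three tensor slots that carry the $(2,2)$-factor; there are exactly $\binom{n}{3}$ such choices.

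Then I would note that each summand is an external tensor product of irreducible $\SL(2)^{\times n}$-modules, and therefore has dimension
\[
\bigl(\dim S_{(2,2)}\CC^{2}\bigr)^{3}\cdot\bigl(\dim S_{(4)}\CC^{2}\bigr)^{n-3}\;=\;1^{3}\cdot 5^{\,n-3}\;=\;5^{\,n-3}.
\]
Finally, I would argue that the sum over distinct orderings is direct: two different placements of the three $(2,2)$-slots produce non-isomorphic irreducible $\SL(2)^{\times n}$-modules (they lie in different isotypic components of $S^{4}(V_{1}^{*}\otimes\cdots\otimes V_{n}^{*})$ by Proposition~\ref{LMdecomp} applied factor by factor), hence their intersection is zero. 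Multiplying the count of summands by the common dimension of each summand gives $\binom{n}{3}\,5^{\,n-3}$, as claimed.

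There is no real obstacle here; the only point that deserves a sentence of justification (and which I would not skip) is the directness of the sum, since the notation $S_{(2,2)}S_{(2,2)}S_{(2,2)}S_{(4)}\cdots S_{(4)}$ hides it. Everything else is a formula lookup.
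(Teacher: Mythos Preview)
Your proposal is correct and follows exactly the reasoning implicit in the paper; the paper itself offers no proof of this Observation beyond the statement, so you are simply supplying the routine details (dimensions of the two Schur modules, the $\binom{n}{3}$ count of distinct placements of the $(2,2)$-factors, and directness of the sum) that the paper leaves to the reader.
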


\begin{prop}\label{prop:multiplicity}
The module $HD =S_{(2,2)} S_{(2,2)} S_{(2,2)} S_{(4)} \dots  S_{(4)}$ occurs with multiplicity $1$ in $S^{4}(V_{1}^{*}\otdot V_{n}^{*})$. Moreover, $HD$ is an irreducible $\G$-module.
\end{prop}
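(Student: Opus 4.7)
The plan is to apply the Landsberg--Manivel decomposition (Proposition~\ref{LMdecomp}) to $S^{4}(V_{1}^{*}\otdot V_{n}^{*})$, reduce the multiplicity question to a computation in the character theory of $\mathfrak{S}_{4}$, and then deduce irreducibility of $HD$ from the semidirect-product structure of $\G$.

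First I would fix one of the $\binom{n}{3}$ summands of $HD$, for instance
\[
M_{0} := S_{(2,2)}V_{1}^{*}\otimes S_{(2,2)}V_{2}^{*}\otimes S_{(2,2)}V_{3}^{*}\otimes S_{(4)}V_{4}^{*}\otdot S_{(4)}V_{n}^{*}.
\]
By Proposition~\ref{LMdecomp}, the multiplicity of $M_{0}$ in $S^{4}(V_{1}^{*}\otdot V_{n}^{*})$ equals $\dim\bigl([(2,2)]^{\otimes 3}\otimes [(4)]^{\otimes (n-3)}\bigr)^{\mathfrak{S}_{4}}$, which reduces to $\dim\bigl([(2,2)]^{\otimes 3}\bigr)^{\mathfrak{S}_{4}}$ since $[(4)]$ is the trivial $\mathfrak{S}_{4}$-representation. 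A short character computation gives $[(2,2)]^{\otimes 2}=[(4)]\oplus[(2,2)]\oplus[(1^{4})]$, and since $[(2,2)]$ is self-conjugate we have $[(2,2)]\otimes[(1^{4})]=[(2,2)]$, so tensoring once more yields $[(2,2)]^{\otimes 3}=[(4)]\oplus 3\,[(2,2)]\oplus[(1^{4})]$. The trivial summand appears exactly once, so $M_{0}$ occurs with multiplicity one in $S^{4}(V_{1}^{*}\otdot V_{n}^{*})$. The same reasoning applies to each of the $\binom{n}{3}$ summands $M_{\sigma}$ obtained by choosing which three of the $n$ factors carry the partition $(2,2)$; because the partition-tuples differ, the $M_{\sigma}$ live in distinct $\GL(V)^{\times n}$-isotypic components of $S^{4}(V_{1}^{*}\otdot V_{n}^{*})$.

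The passage to $\G$-modules is then structural. By construction $HD = \bigoplus_{\sigma} M_{\sigma}$, and $\mathfrak{S}_{n}$ permutes the $M_{\sigma}$ transitively by relabeling tensor factors. If $N\subset HD$ is a nonzero $\G$-submodule, then $N$ meets some $M_{\sigma_{0}}$ in a nonzero $\SL(2)^{\times n}$-submodule, hence contains all of $M_{\sigma_{0}}$ by irreducibility of the latter as an $\SL(V)^{\times n}$-module; applying the $\mathfrak{S}_{n}$-action then forces $N$ to contain every $M_{\sigma}$, and $N=HD$. This proves $HD$ is irreducible as a $\G$-module. Multiplicity one in $S^{4}(V_{1}^{*}\otdot V_{n}^{*})$ follows because each $M_{\sigma}$ contributes exactly one $\GL(V)^{\times n}$-copy of its abstract type, all of these copies already lie in $HD$, so there is no room elsewhere in $S^{4}(V_{1}^{*}\otdot V_{n}^{*})$ for a second $\G$-isomorphic copy.

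The main obstacle I anticipate is not conceptual but computational: executing the character-theoretic decomposition of $[(2,2)]^{\otimes 3}$ correctly. The structural steps (isotypic decomposition and transitivity of the $\mathfrak{S}_{n}$-action on the summands $M_{\sigma}$) are routine once the multiplicity of a single summand has been pinned down, and the argument appears to generalize immediately to any module of the form $S_{\pi_{1}}\dots S_{\pi_{n}}$ for which the analogous $\mathfrak{S}_{d}$-invariant space is one-dimensional, suggesting the general Lemma~\ref{lem:multiplicity:general} promised in the outline.
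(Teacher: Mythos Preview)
Your proposal is correct and follows essentially the same route as the paper: reduce the multiplicity of a single $\SL(2)^{\times n}$-summand to $\dim\bigl([(2,2)]^{\otimes 3}\bigr)^{\mathfrak{S}_4}$ via Proposition~\ref{LMdecomp} (using that $[(4)]$ is trivial), verify this equals $1$, and deduce $\G$-irreducibility from the fact that $\mathfrak{S}_n$ permutes the pairwise non-isomorphic summands $M_\sigma$ transitively. The only cosmetic difference is that the paper computes $\dim\bigl([(2,2)]^{\otimes 3}\bigr)^{\mathfrak{S}_4}$ directly from the formula $\frac{1}{4!}\sum_{\sigma}\chi_{(2,2)}(\sigma)^3$, whereas you decompose $[(2,2)]^{\otimes 2}$ and $[(2,2)]^{\otimes 3}$ explicitly; your irreducibility argument is also spelled out more fully than the paper's, which simply cites this as a standard fact.
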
 
\begin{remark}
The fact that $HD$ occurs with multiplicity $1$ saves us a lot of work because we do not have to worry about which isomorphic copy of the module occurs in the ideal.
\end{remark}
\begin{proof}
For the ``moreover'' part, notice that 
by definition, $HD= S_{(2,2)} S_{(2,2)} S_{(2,2)} S_{(4)} \dots  S_{(4)}$ is a direct sum over permutations yielding distinct $SL(2)^{\times n}$-modules.  It is a standard fact that each summand is an irreducible $SL(2)^{\times n}$-module, and this makes $HD$ an irreducible $\G$-module.
 
We need to examine the $SL(2)^{\times n}$-module decomposition of $S^4(V_{1}^{*} \otdot V_{n}^{*})$.  It suffices to prove for any fixed permutation $\sigma$, that $ S_{(2,2)}V_{\sigma(1)}^{*} \otimes S_{(2,2)}V_{\sigma(2)}^{*} \otimes S_{(2,2)}V_{\sigma(3)}^{*} \otimes S_{(4)} V_{\sigma(4)}^{*} \otimes  \dots  \otimes S_{(4)} V_{\sigma(n)}^{*}$ is an $SL(2)^{\times n}$-module which occurs with multiplicity $1$ in the decomposition of $S^4(V_{1}^{*} \otdot V_{n}^{*})$. 

We will follow the notation and calculations similar to  \cite{LM04}.
For a representation $[\pi]$ of the symmetric group $\mathfrak{S}_{d}$, let $\chi_{\pi}$ denote its character.  The number of occurrences of $S_{\pi_1}V_1^{*}\otdot S_{\pi_n}V_{n}^{*}$ in the decomposition of $S^d(V_1^{*} \otdot V_{n}^{*})$ is computed by the dimension of the space of $\mathfrak{S}_{d}$ invariants, $\dim\left(([\pi_1]\otdot[\pi_n])^{\mathfrak{S}_d}\right)$.  This may be computed by the formula  

\begin{equation*}
\dim\left(([\pi_1]\otdot[\pi_n])^{\mathfrak{S}_d}\right) = \frac{1}{d!}\sum_{\sigma \in\mathfrak{S}_d} \chi_{\pi_1}(\sigma)  \dots  \chi_{\pi_n}(\sigma)
.\end{equation*}

  In our case, we need to compute 
\begin{eqnarray*}
\dim\left(([(2,2)]\otimes[(2,2)]\otimes [(2,2)]\otimes[(4)]\otdot[(4)])^{\mathfrak{S}_4} \right)\\{}
= \frac{1}{4!}\sum_{\sigma \in\mathfrak{S}_4} \chi_{(2,2)}(\sigma) \chi_{(2,2)}(\sigma)\chi_{(2,2)}(\sigma)\chi_{(4)}(\sigma)  \dots  \chi_{(4)}(\sigma)
.\end{eqnarray*}
But, $\chi_{(4)}(\sigma)=1$ for every $\sigma\in\mathfrak{S}_4$.  So, our computation reduces to the following
\begin{eqnarray*}
 \dim\left(([(2,2)]\otimes[(2,2)]\otimes [(2,2)]\otimes[(4)]\otdot[(4)])^{\mathfrak{S}_{n}}\right) 
\\ = \frac{1}{4!}\sum_{\sigma \in\mathfrak{S}_4} \chi_{(2,2)}(\sigma) \chi_{(2,2)}(\sigma)\chi_{(2,2)}(\sigma) =1
,\end{eqnarray*}
where the last equality is found by direct computation.  The module $S_{(2,2)}V_1^{*} \otimes S_{(2,2)}V_2^{*} \otimes S_{(2,2)}V_3^{*}$ occurs with multiplicity $1$ in $S^4(V_1^{*}\otimes V_2^{*} \otimes V_3^{*})$. (The full decomposition of $S^4(V_1^{*}\otimes V_2^{*} \otimes V_3^{*})$ was computed in (prop 4.3 \cite{LM04}).)  Therefore the module $ S_{(2,2)}V_{\sigma(1)}^{*} \otimes S_{(2,2)}V_{\sigma(2)}^{*} \otimes S_{(2,2)}V_{\sigma(3)}^{*} \otimes S_{(4)} V_{\sigma(4)}^{*} \otimes  \dots  \otimes S_{(4)} V_{\sigma(n)}^{*}$ occurs with multiplicity $1$ in $S^{4}(V_{1}^{*}\otdot V_{n}^{*})$. 

We have seen that each summand of $HD$ is an irreducible $SL(2)^{\times n}$-module which occurs with multiplicity $1$ in $S^4(V_{1}^{*}\otdot V_{n}^{*})$. Therefore $HD$ is an irreducible $\G$-module, and it occurs with multiplicity $1$ in $S^4(V_{1}^{*}\otdot V_{n}^{*})$.
\end{proof}
We remark that the above argument generalizes to:
\begin{lemma}\label{lem:multiplicity:general}
 For every collection $\pi_{1}, \dots ,\pi_{n}$ of partitions of $d$, 
\begin{equation*}
 \dim\left(([\pi_1]\otdot[\pi_n])^{\mathfrak{S}_d}\right) =
 \dim\left(([\pi_1]\otdot[\pi_n]\otimes[(d)])^{\mathfrak{S}_{d}}\right)
.\end{equation*}
In particular, if $M$ is any irreducible $SL(V_{1})\tdt SL(V_{n})$-module which occurs with multiplicity $m$ in $S^{d}(V_{1}^{*}\otdot V_{n}^{*})$, then $M\otimes S^{d}V_{n+1}^{*}$ is an irreducible $SL(V_{1})\tdt SL(V_{n})\times SL(V_{n+1})$-module which occurs with multiplicity $m$ in $S^{d}(V_{1}^{*}\otdot V_{n}^{*}\otimes V_{n+1}^{*})$. 
\end{lemma}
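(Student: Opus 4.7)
The plan is to reduce both assertions to the single observation that the partition $(d)$ indexes the trivial representation of $\mathfrak{S}_d$, so $\chi_{(d)}(\sigma) = 1$ for every $\sigma \in \mathfrak{S}_d$. This fact was used implicitly in the proof of Proposition~\ref{prop:multiplicity} when the factors $\chi_{(4)}(\sigma)$ were dropped from the product in the character sum.

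First I would establish the character identity. By the standard formula for the dimension of the space of $\mathfrak{S}_d$-invariants in a tensor product of representations,
\[
\dim\left(([\pi_1]\otdot[\pi_n])^{\mathfrak{S}_d}\right) = \frac{1}{d!}\sum_{\sigma \in \mathfrak{S}_d} \chi_{\pi_1}(\sigma) \cdots \chi_{\pi_n}(\sigma).
\]
Inserting the identically-one function $\chi_{(d)}(\sigma)$ as an extra factor leaves the sum unchanged, and the resulting expression is precisely the character-theoretic formula for $\dim\left(([\pi_1]\otdot[\pi_n]\otimes[(d)])^{\mathfrak{S}_d}\right)$. This proves the displayed equality.

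For the ``in particular'' part, I would invoke Proposition~\ref{LMdecomp}. An irreducible $\SL(V_1)\tdt \SL(V_n)$-module occurring in $S^d(V_1^{*}\otdot V_n^{*})$ has the form $M = S_{\pi_1}V_1^{*}\otdot S_{\pi_n}V_n^{*}$ for partitions $\pi_i$ of $d$, and the decomposition formula identifies its multiplicity with $\dim\left(([\pi_1]\otdot[\pi_n])^{\mathfrak{S}_d}\right)$, which equals $m$ by hypothesis. Because $S^d V_{n+1}^{*} = S_{(d)}V_{n+1}^{*}$, applying the same decomposition formula to $S^d(V_1^{*}\otdot V_{n+1}^{*})$ shows that $M \otimes S^d V_{n+1}^{*}$ occurs with multiplicity $\dim\left(([\pi_1]\otdot[\pi_n]\otimes[(d)])^{\mathfrak{S}_d}\right)$, and the first part of the lemma equates this with $m$. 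Irreducibility of $M\otimes S^dV_{n+1}^{*}$ is automatic since a tensor product of irreducible modules for factors of a product of reductive groups is again irreducible.

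There is essentially no obstacle here beyond bookkeeping; the only point that could trip one up is to remember the convention that $[(d)]$ is the trivial $\mathfrak{S}_d$-module (rather than, say, the sign representation $[(1^d)]$) and that $S^d V^{*}$ coincides with the Schur module $S_{(d)}V^{*}$ in the conventions used throughout the paper. Once those identifications are in hand, the lemma follows directly from the character formula and Proposition~\ref{LMdecomp}.
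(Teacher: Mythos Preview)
Your proof is correct and follows essentially the same approach as the paper: both use the character formula for the dimension of $\mathfrak{S}_d$-invariants and the observation that $\chi_{(d)}\equiv 1$. If anything, you have spelled out the ``in particular'' part more carefully than the paper, which leaves the connection to Proposition~\ref{LMdecomp} implicit.
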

\begin{proof}
Use the formula
\begin{equation*}
\dim\left(([\pi_1]\otdot[\pi_n])^{\mathfrak{S}_d}\right) = \frac{1}{d!}\sum_{\sigma \in\mathfrak{S}_d} \chi_{\pi_1}(\sigma)  \dots  \chi_{\pi_n}(\sigma)
.\end{equation*} 
and note that $\chi_{(d)}(\sigma)=1$ for every $\sigma \in \mathfrak{S}_d$.
\end{proof}

\begin{prop}\label{prop:one way} The hyperdeterminantal module is contained in the ideal of the variety of principal minors of symmetric matrices, \ie
\[HD= S_{(2,2)} S_{(2,2)} S_{(2,2)} S_{(4)} \dots  S_{(4)} \subseteq  \I(Z_{n}) ,\]
and in particular, $Z_{n}\subseteq \V(HD)$.
\end{prop}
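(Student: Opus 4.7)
The plan is to exploit two structural facts in tandem: first, $HD$ is \emph{defined} in Example~\ref{rmk:HDweight} as the linear span of the $G$-orbit of a single distinguished polynomial, and second, $\I(Z_{n})$ is a $G$-module because $Z_{n}$ is $G$-invariant (Theorem~\ref{thm:symmetry}). Consequently, to prove $HD\subseteq \I(Z_{n})$ it suffices to check that the distinguished polynomial itself belongs to $\I(Z_{n})$: its $G$-orbit will then lie in $\I(Z_{n})$, and taking the linear span produces all of $HD$. Since $Z_{n}\subseteq \V(HD)$ is equivalent to $HD\subseteq \I(Z_{n})$, the second conclusion will follow automatically.

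The distinguished polynomial to single out is Cayley's $2\times 2\times 2$ hyperdeterminant $\Delta_{222}$ expressed in the variables $X^{[i_{1},i_{2},i_{3},0,\dots,0]}$ (with the last $n-3$ tensor indices pinned to $0$), that is, precisely the polynomial displayed in Corollary~\ref{cor}. Example~\ref{rmk:HDweight} already identifies this as a highest weight vector whose $G$-orbit spans $HD$, so the problem reduces to showing that this one polynomial vanishes identically on $Z_{n}$.

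To verify the vanishing, I would pick an arbitrary $[z]=\varphi([A,t])\in Z_{n}$ and record the relevant coordinates as
\[
z_{[i_{1},i_{2},i_{3},0,\dots,0]}=t^{n-|I|}\,\Delta_{[i_{1},i_{2},i_{3}]}(A_{3}),
\]
where $A_{3}$ is the top-left $3\times 3$ principal submatrix of $A$ and $|I|=i_{1}+i_{2}+i_{3}$. A direct inspection of the ten monomials in Corollary~\ref{cor} shows that in each monomial the four indices $I_{1},\dots,I_{4}$ satisfy $|I_{1}|+|I_{2}|+|I_{3}|+|I_{4}|=6$, so the prefactors $t^{n-|I_{k}|}$ collect into a uniform $t^{4n-6}$ that factors out of $\Delta_{222}(z)$, leaving $\Delta_{222}$ evaluated on the principal-minor vector of $A_{3}$. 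That residual expression vanishes by the $n=3$ base case due to Holtz--Sturmfels, who identified $Z_{3}$ as the hypersurface $\{\Delta_{222}=0\}$.

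The conceptual ingredients -- the generation of $HD$ as a $G$-orbit span, the $G$-invariance of $\I(Z_{n})$, and the $n=3$ base case -- are all already in hand, so the only remaining work is the $t$-weight bookkeeping in the paragraph above. I expect no real obstacle beyond that direct inspection of the ten monomials of Corollary~\ref{cor}; everything else is a formal consequence of symmetry and the Holtz--Sturmfels result.
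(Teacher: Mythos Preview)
Your proposal is correct and follows essentially the same route as the paper's own proof: both reduce the containment $HD\subseteq \I(Z_n)$ to checking that the single highest weight polynomial (the $2\times 2\times 2$ hyperdeterminant on the variables $X^{[i_1,i_2,i_3,0,\dots,0]}$) vanishes on $Z_n$, using that $\I(Z_n)$ is a $G$-module and that $HD$ is the span of the $G$-orbit of this polynomial, and then invoke the $n=3$ case of Holtz--Sturmfels. Your version is slightly more explicit in handling the $t$-homogeneity (the $t^{4n-6}$ bookkeeping), whereas the paper phrases the same step geometrically by identifying $Z_n\cap\operatorname{span}\{X^{[i_1,i_2,i_3,0,\dots,0]}\}$ with the principal minors of the upper-left $3\times 3$ block; the content is the same.
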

\begin{proof}
Note, this statement is proved in \cite{HSt}.  The following is a slightly different proof that uses representation theory.
Both $HD$ and $\I(Z_{n})$ are $\G$-modules and $HD$ is an irreducible $\G$-module, so we only need to show that the highest weight vector of $HD$ vanishes on all points of $Z_{n}$.  The highest weight vector of $HD$ is the hyperdeterminant of format $2 \times 2 \times 2$ on the variables $X^{[i_{1},i_{2},i_{3},0, \dots ,0]}$.  The set \[Z_{n}\cap span\{X^{[i_{1},i_{2},i_{3},0, \dots ,0]} \mid i_{1},i_{2},i_{3}\in \{0,1\}\},\] is the set of principal minors of the upper-left $3\times 3$ block of $n\times n$ matrices. The highest weight vector of $HD$ vanishes on these principal minors because of the case $n=3$, so there is nothing more to show.
\end{proof}

\begin{prop}\label{prop:unaugment}  Let $V$ and $W$ be complex vector spaces with $\dim(V)\geq 2$. Suppose $Y\subset \PP W$ and $X\subset \PP ( V \otimes W )$ are varieties such that $Seg(Y\times \PP W) \subset X$. 
 Suppose $M\subset S^{d}V^{*}$ is a space of polynomials. Then $M\otimes S^{d}W^{*} \subset \I_{d}(X)$ only if $M \subset \I_{d}(Y)$. 
\end{prop}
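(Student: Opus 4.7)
The plan is to prove the contrapositive: assuming some $f \in M$ fails to vanish at some $[v_0] \in Y$, I will exhibit an element of $M \otimes S^{d} W^{*}$ that is not in $\I_{d}(X)$. (For the statement to type-check --- $M \subset S^{d}V^{*}$ and the conclusion is $M \subset \I_{d}(Y)$ --- one must read $Y \subset \PP V$; I will do so.)

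The one preparatory point is the identification of $M \otimes S^{d} W^{*}$ as a subspace of $S^{d}(V \otimes W)^{*}$ via the Cartan (highest-weight) piece of the decomposition recalled in Proposition~\ref{LMdecomp}. Under this embedding, a decomposable element $f \otimes g$ is realized as the polynomial on $V \otimes W$ whose value on a rank-one tensor is
\[
(f \otimes g)(v \otimes w) \;=\; f(v)\,g(w),
\]
extended multilinearly to all of $V \otimes W$. This evaluation formula on Segre points is the engine of the whole argument.

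Now exploit $Seg(Y \times \PP W) \subset X$: for any $v_{0} \in \widehat{Y}$ and any $w \in W$, the rank-one tensor $v_{0} \otimes w$ lies in $\widehat{X}$, so every polynomial in $\I_{d}(X)$ vanishes there. Under the standing assumption $M \otimes S^{d} W^{*} \subset \I_{d}(X)$, fixing $f \in M$ therefore forces $f(v_{0})\,g(w) = 0$ for every $g \in S^{d} W^{*}$ and every $w \in W$. Choosing a linear form $\ell \in W^{*}$ with $\ell(w) \neq 0$ and setting $g = \ell^{d}$ gives $g(w) = \ell(w)^{d} \neq 0$, so $f(v_{0}) = 0$. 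Since $v_{0} \in \widehat{Y}$ and $f \in M$ were arbitrary, $M \subset \I_{d}(Y)$. The only step of substance is the evaluation formula for the Cartan embedding; thereafter the argument is purely mechanical, and the hypothesis $\dim(V) \geq 2$ seems not to be used in this direction --- presumably it matters only for the downstream applications of the proposition (such as the inductive use in which $V = V_{1} \otdot V_{n-1}$, $W = V_{n}$, $Y = Z_{n-1}$, and $X = Z_{n}$).
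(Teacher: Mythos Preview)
Your proof is correct and follows essentially the same route as the paper: evaluate $f\otimes g$ on rank-one tensors $y\otimes w$ via $(f\otimes g)(y\otimes w)=f(y)\,g(w)$, use $Seg(Y\times\PP W)\subset X$ to force this to vanish, and pick $g=\ell^{d}$ with $\ell(w)\neq 0$ to conclude $f(y)=0$. Two minor remarks: you announce a contrapositive argument but in fact write the direct one (harmless); and you are right both that the statement only type-checks with $Y\subset\PP V$ and that the dimension hypothesis plays no role---the paper invokes it (as ``$\dim(V)\geq 2$ so $\V(\alpha)$ is a hyperplane''), but the conclusion $\alpha(w)\neq 0$ for some $w$ needs only $\alpha\neq 0$.
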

\begin{proof}
By Lemma~\ref{lem:multiplicity:general}, it makes sense to think of $M\otimes S^{d}W^{*}\subset S^{d}(V^{*}\otimes W^{*})$ as a space of polynomials.

There exists a basis of $S^{d}W^{*}$ of vectors of the form $\alpha^{d}$. So $M\otimes S^{d}W^{*}$ has a basis of vectors of the form $f\otimes \alpha^{d}$ with $f\in M$ and $\alpha \in W^{*}$.  It suffices to prove the proposition on this basis.

Suppose $f\otimes \alpha^{d}$ is a basis vector in $M\otimes S^{d}(W^{*}) \subset \I_{d}(X)$.  Then $Seg(Y\times \PP W) \subset X$ implies that $f\otimes \alpha^{d}\in \I_{d}(Seg(Y\times \PP W)) \subset S^{d}(V^{*}\otimes W^{*})$.  This means that $f\otimes \alpha^{d}(y\otimes w) = 0$ for all $y\in Y$ and for all $w\in W$.  It is a fact that $\alpha^{d}(w) = \alpha(w)^{d}$ (this can be deduced from Lemma~\ref{lem:polar} below, for instance), so we can evaluate
\[f\otimes \alpha^{d}(y\otimes w) = f(y) \alpha^{d}(w) = f(y) \alpha(w)^{d}.\]  Since $\dim(V) \geq 2$, $\V(\alpha)$ is a hyperplane. It is no problem to choose a point that   misses a hyperplane, so we can choose a particular $w\in W$ so that $\alpha(w) \neq 0$.

So we have $f(y) \alpha(w)^{d} = 0$ for all $y\in Y$ and $\alpha(w)\neq 0$, so $f(y)=0$ for all $y\in Y$ and hence $f \in \I_{d}(Y)$.  We can repeat the argument for any $f\in M$ we choose, so we are done.
\end{proof}

Proposition~\ref{prop:unaugment} fails to be an if and only if statement.  Explicitly, we \emph{cannot} say that every module in the space $\I_{d}(X)$ occurs as $M\otimes S^{d}V^{*}$ for a subset $M \subset I_{d}(Y)$.  In Section~\ref{sec:augment} we study the zero sets of modules of the form $I_{d}(Y)\otimes S^{d}V^{*}$, and this sheds light on the failure of the converse of Proposition~\ref{prop:unaugment}

\begin{remark}
Proposition~\ref{prop:subvar} says that $Seg(Z_{n}\times \PP V_{n+1}) \subset Z_{n+1}$. We can use this proposition to study the variety of principal minors in two ways.
First, if $M$ is a module in $\I_{d}(Z_{n})$, then $M\otimes S^{d}V_{n+1}$ is a module in $\I_{d}(Z_{n+1})$. The second use is the contrapositive version. It gives an easy test for ideal membership for modules that have at least one $S_{(d)}V_{i}^{*}$ factor.  Suppose we know $\I_{d}(Z_{n})$ for some $n$.  If we want to test whether $M=S_{\pi_{1}}V_{1}^{*} \otdot S_{\pi_{n+1}} V_{n+1}^{*}$ is in $\I_{d}(Z_{n+1})$ and we know that $M$ has at least one $\pi_{i}=(d)$, then we can remove $S_{\pi_{i}}V_{i}^{*}$ and check whether the module we have left is in $\I_{d}(Z_{n})$.
\end{remark}

\section{A geometric characterization of the zero set of the hyperdeterminantal module via augmentation}\label{sec:augment}

The hyperdeterminantal module has a useful inductive description that we would like to be able to exploit.  In particular,  for $n\geq 3$, the module is always of the form 
\[S_{(2,2)}S_{(2,2)}S_{(2,2)}S_{(4)}\dots S_{(4)},\]
where the number of $S_{(4)}$ factors is $n-3$. So for $n\geq 3$, to construct the $\G$-module $HD$ in the case $n=k+1$ from $HD$ in the case $n = k$, we simply append another $S^{4}$.  

More generally, if $M$ is an $SL(V)$-module, we will call a $SL(V)\times SL(W)$-module of the form $M\otimes S^{d}W^{*}$ an \emph{augmentation} or \emph{augmented module}.  So for $n\geq 4$, $HD$ can be considered as the sum of augmented modules.

In this section, we study augmented modules and their zero-sets in order to arrive at a geometric description of the zero set of an augmented module (Lemma~\ref{lem:augment}).  By using applying this geometric description to the hyperdeterminantal module, we get a geometric description of its zero set (Lemma~\ref{lem:characterization}).  This description is essential in our proof of Theorem~\ref{thm:Main Theorem}.

\subsection{Polarization and its application to augmented modules}

Augmentation is similar to prolongation, a concept found in the study of the ideals of secant varieties.  A difference between the two is that augmentation does not change the degree of the polynomials, whereas prolongation increases the degree.

It is not a surprise that we can get inspiration from the techniques used to study secant varieties when studying augmented modules.
In particular, polarization is a tool from classical invariant theory \cite[p.5,6]{Weyl} and is useful in the study ideals of secant varieties (see \cite{LM03,SS} for recent examples). In what follows, we use polarization to better understand the polynomials in an augmented module.

Polarization allows for the passage from a homogeneous polynomial to a symmetric multi-linear form.
Let $x_{1},\dots,x_{n}$ be a basis of $V$, and let $v_{i}=v_{i,1}x_{1}+\dots+v_{i,n}x_{n}$ for $1\leq i\leq d$. Given a homogeneous degree $d$ polynomial $f$ in the polynomial ring $\CC[x_{1},\dots, x_{n}]$,
the \emph{polarization} of $f$ is a symmetric multi-linear form $\overrightarrow{f} \in S^{d}V^{*}$ where we define $\overrightarrow{f}(v_{1},\dots,v_{d})$ to be the coefficient of $t_{1}t_{2}\dots t_{d}$ in the expansion of
\[
f(t_{1}v_{1}+\dots+t_{d}v_{d})
\]
considered as a polynomial in $t_{1},\dots,t_{d}$.  For example if $f(x_{1},x_{2}) = (x_{1})^{2}x_{2}$ one calculates that $\overrightarrow{f}(v_{1},v_{2},v_{3}) = 2(v_{1,1}v_{2,1}v_{3,2} + v_{1,1} v_{3,1} v_{2,2} + v_{2,1} v_{3,1} v_{1,2})$.

The following useful characterization is straightforward from the definition, and while it is a consequence of classical facts \cite{Weyl}, we found it stated in \cite{SS}.  

\begin{lemma}[Lemma 2.5(1) \cite{SS}]\label{lem:SSLemma}
If $F$ is a homogeneous degree $d$ polynomial in $x_{1},\dots,x_{n}$, let $\overrightarrow{F}$ denote its polarization.   Let $v = t_{1}x_{1}+ \dots + t_{k}x_{k}$.  Then 
\begin{equation}\label{evaluate}F(v) = \overrightarrow{F}(v, \dots ,v)
=\sum_{\mathbf{\beta}} \frac{1}{\mathbf{\beta} !}\mathbf{t^{\beta}} \overrightarrow{F}\left(\mathbf{x^{\beta}}\right)
,\end{equation}
where $\mathbf{\beta}=(\beta_{1}, \dots ,\beta_{k})$, is a (non-negative) partition of $d$, $\mathbf{\beta}!=\beta_{1}! \dots \beta_{k}!$, $\mathbf{t^{\beta}}=  t_{1}^{\beta_{1}} \dots  t_{k}^{\beta_{k}}$, 
and $ \overrightarrow{F}\left(\mathbf{x^{\beta}}\right)= \overrightarrow{F}\left(x_{1}^{\beta_{1}}, \dots ,x_{k}^{\beta_{k}}\right)$, and $x_{i}^{\beta_{i}}$ is to be interpreted as $x_{i}$ repeated $i$ times.
\end{lemma}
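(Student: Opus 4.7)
The plan is to derive the identity directly from the definition of polarization, using three ingredients: symmetry and multilinearity of $\overrightarrow{F}$, homogeneity of $F$, and the multinomial theorem. First I would record the two basic properties of $\overrightarrow{F}$ that make everything run. Symmetry under permutations of the $d$ arguments follows because simultaneously permuting the $v_{i}$ and the $t_{i}$ leaves $F(t_{1}v_{1}+\cdots+t_{d}v_{d})$ invariant, while the coefficient of the monomial $t_{1}\cdots t_{d}$ is invariant under permutations of the $t_{i}$. Multilinearity in each $v_{i}$ follows because $v_{i}$ enters the expression only through the term $t_{i}v_{i}$, and extracting the coefficient of $t_{1}\cdots t_{d}$ isolates the piece that is linear in $t_{i}$, and hence linear in $v_{i}$.

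Next I would prove the first equality $F(v)=\overrightarrow{F}(v,\ldots,v)$ by specializing $v_{i}=v$ in the definition. Homogeneity of $F$ of degree $d$ gives
\[
F(t_{1}v+\cdots+t_{d}v) = (t_{1}+\cdots+t_{d})^{d}\, F(v),
\]
and the coefficient of $t_{1}\cdots t_{d}$ in $(t_{1}+\cdots+t_{d})^{d}$ is $d!$ by the multinomial theorem. Thus the unnormalized polarization satisfies $\overrightarrow{F}(v,\ldots,v)=d!\, F(v)$; absorbing the factor $d!$ into the convention implicit in the statement yields $\overrightarrow{F}(v,\ldots,v)=F(v)$.

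For the sum formula I would substitute $v=\sum_{j=1}^{k}t_{j}x_{j}$ into $\overrightarrow{F}(v,\ldots,v)$ and expand by multilinearity:
\[
\overrightarrow{F}(v,\ldots,v) = \sum_{(i_{1},\ldots,i_{d})\in\{1,\ldots,k\}^{d}} t_{i_{1}}\cdots t_{i_{d}}\; \overrightarrow{F}(x_{i_{1}},\ldots,x_{i_{d}}).
\]
Group the sequences $(i_{1},\ldots,i_{d})$ by their type $\beta=(\beta_{1},\ldots,\beta_{k})$ with $\beta_{j}=|\{\ell:i_{\ell}=j\}|$. By symmetry of $\overrightarrow{F}$, the value $\overrightarrow{F}(x_{i_{1}},\ldots,x_{i_{d}})$ depends only on $\beta$ and equals $\overrightarrow{F}(\mathbf{x}^{\beta})$, while the number of sequences of type $\beta$ is $d!/\beta!$ by the standard multinomial count. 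Collecting terms and dividing by $d!$ (the same normalization already absorbed above) produces the claimed expression $F(v)=\sum_{|\beta|=d}\frac{1}{\beta!}\mathbf{t}^{\beta}\overrightarrow{F}(\mathbf{x}^{\beta})$.

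The main ``obstacle'' is purely bookkeeping: keeping careful track of the multinomial factor $d!/\beta!$ coming from the grouping step and the $1/d!$ normalization implicit in the definition of $\overrightarrow{F}$, so that the two factors cancel precisely to leave the stated coefficient $1/\beta!$. The result itself is classical and involves no genuine analytic or geometric difficulty; this is why the paper's narrative remarks that it is ``straightforward from the definition.''
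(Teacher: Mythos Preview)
Your argument is correct and is exactly the classical derivation the paper alludes to when it says the identity is ``straightforward from the definition.'' Note, however, that the paper does not supply its own proof of this lemma: it simply cites \cite{SS} (and ultimately \cite{Weyl}) and states the formula. So there is no paper proof to compare against; your write-up fills that gap.

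One point worth making explicit, which you already caught: with the paper's definition of $\overrightarrow{F}$ (coefficient of $t_{1}\cdots t_{d}$ in $F(\sum t_{i}v_{i})$, no $1/d!$), one literally has $\overrightarrow{F}(v,\ldots,v)=d!\,F(v)$, as the paper's own worked example $f=x_{1}^{2}x_{2}$ confirms. Thus the first equality $F(v)=\overrightarrow{F}(v,\ldots,v)$ in the displayed formula is off by $d!$, while the outer equality $F(v)=\sum_{\beta}\frac{1}{\beta!}\mathbf{t}^{\beta}\overrightarrow{F}(\mathbf{x}^{\beta})$ is exactly right after the multinomial count $d!/\beta!$ cancels the $d!$. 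You handle this correctly by tracking the normalization; it is a harmless inconsistency in the statement rather than a flaw in your proof.
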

Here is an example of the utility of this lemma that we will need later.
\begin{lemma}\label{lem:linear condition}
A linear space $L = span\{x_{1},\dots,x_{k}\}$ is a subset of $\V(f)$ if and only if $\overrightarrow{f}\left(\mathbf{x^{\beta}}\right) = 0$ for every partition $\beta$ of $d$.
\end{lemma}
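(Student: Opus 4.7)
The plan is to deduce this directly from Lemma~\ref{lem:SSLemma} by viewing the right-hand side of equation \eqref{evaluate} as a polynomial identity in auxiliary scalars $t_1,\dots,t_k$. First I would observe that a generic point $v \in L = \operatorname{span}\{x_1,\dots,x_k\}$ can be written as $v = t_1 x_1 + \dots + t_k x_k$ for some scalars $t_i \in \CC$, and that $L \subset \V(f)$ means precisely that $f(t_1 x_1 + \dots + t_k x_k) = 0$ for all choices of $(t_1,\dots,t_k) \in \CC^k$.

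Next, I would apply Lemma~\ref{lem:SSLemma} to rewrite
\[
f(t_1 x_1 + \dots + t_k x_k) \;=\; \sum_{\beta} \frac{1}{\beta!}\, \mathbf{t}^{\beta}\, \overrightarrow{f}\bigl(\mathbf{x}^{\beta}\bigr),
\]
where the sum is over partitions $\beta = (\beta_1,\dots,\beta_k)$ of $d$. The forward direction ($\Leftarrow$) is then immediate: if every scalar $\overrightarrow{f}(\mathbf{x}^{\beta})$ vanishes, then the right-hand side is identically zero, so $f$ vanishes on every $v \in L$.

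For the reverse direction ($\Rightarrow$), I would use the fact that the monomials $\mathbf{t}^{\beta}$ (for distinct $\beta$) are linearly independent as elements of the polynomial ring $\CC[t_1,\dots,t_k]$. If $L \subset \V(f)$, then the displayed polynomial in $t_1,\dots,t_k$ vanishes for every substitution, hence is the zero polynomial in $\CC[t_1,\dots,t_k]$; by linear independence of the monomials, each coefficient $\frac{1}{\beta!}\overrightarrow{f}(\mathbf{x}^{\beta})$ must be zero, and therefore $\overrightarrow{f}(\mathbf{x}^{\beta}) = 0$ for every partition $\beta$ of $d$.

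There is essentially no obstacle here beyond being careful that Lemma~\ref{lem:SSLemma} is being applied to the restriction of $f$ to the finite-dimensional subspace $L$ (so the $t_i$'s really do parameterize all of $L$), and that we are working over an infinite field so vanishing on all scalar substitutions is equivalent to vanishing as a polynomial. Both points are standard, so the proof is little more than a one-line invocation of the polarization identity followed by comparing coefficients.
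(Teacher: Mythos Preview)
Your proof is correct and follows essentially the same approach as the paper: both invoke the polarization identity of Lemma~\ref{lem:SSLemma} and then argue that every coefficient in the resulting polynomial in $t_1,\dots,t_k$ must vanish. The only difference is cosmetic---you appeal directly to the linear independence of the monomials $\mathbf{t}^{\beta}$ over the infinite field $\CC$, whereas the paper isolates each coefficient by applying $\partial/\partial\mathbf{t}^{\beta'}$ and then letting $t_i\to 0$.
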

\begin{proof}
A linear space $L = span\{x_{1},\dots,x_{k}\}$ is in the zero set of $f$ if and only if $f(t_{1}x_{1}+\dots+ t_{k}x_{k}) = 0$ for all choices of $t_{i}\in \CC$. Formula \eqref{evaluate} says that
\begin{equation}\label{evaluate2}f(t_{1}x_{1}+\dots+ t_{k}x_{k})  =\sum_{\mathbf{\beta}} \frac{1}{\mathbf{\beta} !}\mathbf{t^{\beta}} \overrightarrow{f}\left(\mathbf{x^{\beta}}\right)
,\end{equation}
and thus implies that if $\overrightarrow{f}\left(\mathbf{x^{\beta}}\right) = 0$ for all $\beta$ then $f(t_{1}x_{1}+\dots+ t_{k}x_{k}) =0$ for all $t_{i}\in \CC$.

For the other direction, suppose  $f(t_{1}x_{1}+\dots+ t_{k}x_{k}) =0$ for all $t_{i}\in \CC$. Consider a fixed partition $\mathbf{\beta'}$ and take the derivative $\frac{\partial}{\partial\mathbf{t^{\beta'}}}$ of \eqref{evaluate2} to get
\[
0= \overrightarrow{f}\left(\mathbf{x^{\beta'}}\right) +\sum_{\mathbf{\beta}>\mathbf{\beta'}} \frac{1}{(\mathbf{\beta-\beta'}) !}\mathbf{t^{\beta-\beta'}} \overrightarrow{f}\left(\mathbf{x^{\beta}}\right)
.\]
Then take limits, as $t_{i}\rightarrow 0$ to find that $0=\overrightarrow{f}\left(\mathbf{x^{\beta'}}\right)$.  We do this for each $\mathbf{\beta'}$ to conclude. 
\end{proof}

In general, the polarization of the tensor product of two polynomials is not likely to be the product of the polarized polynomials; however, there is something we can say in the following special case:
\begin{lemma}\label{lem:polar}
 Let $F\in S^d(W^{*})$ and let $\overrightarrow{F}$ denote its polarization. Then for $\gamma\in V^{*}$ we have
\[\overrightarrow{F\otimes (\gamma)^d} = \overrightarrow{F}\otimes \overrightarrow{(\gamma)^d} = \overrightarrow{F}\otimes (\gamma)^d.\] 
\end{lemma}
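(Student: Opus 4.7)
The plan is to reduce by linearity to the case where $F$ is a pure $d$-th power of a linear form and then verify both sides by direct computation.

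Since polarization is linear in $F$ and $S^{d}W^{*}$ is linearly spanned by the pure powers $\{\beta^{d} : \beta \in W^{*}\}$ (a standard consequence of the Veronese embedding being linearly non-degenerate in characteristic zero), it suffices to treat $F = \beta^{d}$ for a single $\beta \in W^{*}$. In that case I claim $\beta^{d} \otimes \gamma^{d} = (\gamma \otimes \beta)^{d}$ as polynomials in $S^{d}((V \otimes W)^{*})$, where $\gamma \otimes \beta \in V^{*} \otimes W^{*} \subset (V \otimes W)^{*}$ is regarded as a linear form. This holds because on a decomposable vector $v \otimes w$ the embedding $S^{d}V^{*} \otimes S^{d}W^{*} \hookrightarrow S^{d}((V \otimes W)^{*})$ gives
\[
(\beta^{d} \otimes \gamma^{d})(v \otimes w) = \beta(w)^{d} \gamma(v)^{d} = \bigl((\gamma \otimes \beta)(v \otimes w)\bigr)^{d},
\]
and two polynomials on $V \otimes W$ that agree on the spanning set of decomposable tensors must coincide.

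Next I would apply the basic formula for the polarization of a pure power: for any linear form $\alpha$, expanding $\alpha(t_{1}u_{1} + \cdots + t_{d}u_{d})^{d}$ and reading off the coefficient of $t_{1}\cdots t_{d}$ immediately yields $\overrightarrow{\alpha^{d}}(u_{1}, \dots, u_{d}) = \alpha(u_{1}) \cdots \alpha(u_{d})$, i.e., $\overrightarrow{\alpha^{d}}$ is the symmetric tensor $\alpha^{\otimes d}$. Applying this to $\alpha = \gamma \otimes \beta$ on the left side and to $\alpha = \beta$ and $\alpha = \gamma$ separately on the right side produces $\overrightarrow{(\gamma \otimes \beta)^{d}} = (\gamma \otimes \beta)^{\otimes d}$ and $\overrightarrow{\beta^{d}} \otimes \overrightarrow{\gamma^{d}} = \beta^{\otimes d} \otimes \gamma^{\otimes d}$. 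Under the canonical identification $(W^{*})^{\otimes d} \otimes (V^{*})^{\otimes d} \cong ((V \otimes W)^{*})^{\otimes d}$ the right-hand side is sent to $(\gamma \otimes \beta)^{\otimes d}$, so both sides of the lemma agree. The final equality $\overrightarrow{\gamma^{d}} = \gamma^{d}$ in the statement simply records the already-used fact that the polarization of a pure $d$-th power is the corresponding symmetric tensor.

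The only genuine obstacle is bookkeeping: one must keep track of the three different incarnations of the same object (homogeneous polynomial, symmetric multilinear form, symmetric tensor) and verify that the combinatorial factors inherent in the definition of polarization absorb consistently under these identifications. Once the conventions are pinned down, the whole argument reduces to the one-line computation on decomposable tensors displayed above, plus the standard linear-span fact used in the initial reduction.
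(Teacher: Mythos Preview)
Your reduction to pure powers is a valid and slightly cleaner route than the paper's. The paper instead inducts on the number of monomials of $F$ in a fixed basis of $W^{*}$: its base case is a single monomial $w_{1}^{\alpha_{1}}\cdots w_{n}^{\alpha_{n}}$, handled via the identification $W^{\otimes d}\otimes V^{\otimes d}\simeq (W\otimes V)^{\otimes d}$, and the inductive step is the same linearity observation you invoke up front. Your choice of spanning set (pure $d$-th powers rather than monomials) buys you a marginally simpler verification, since the polarization of $\alpha^{d}$ is just $\alpha^{\otimes d}$ with no multinomial coefficients to track.

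There is, however, one genuine slip in your justification of $\beta^{d}\otimes\gamma^{d}=(\gamma\otimes\beta)^{d}$. You argue that the two sides agree on decomposable tensors $v\otimes w$ and then assert that ``two polynomials on $V\otimes W$ that agree on the spanning set of decomposable tensors must coincide.'' That assertion is false: decomposable tensors span $V\otimes W$ \emph{linearly}, but this does not force equality of higher-degree polynomials---for instance, the $2\times 2$ minor $x_{11}x_{22}-x_{12}x_{21}$ vanishes on every rank-one tensor in $\CC^{2}\otimes\CC^{2}$ yet is not identically zero. The identity you want is nonetheless true; the correct reason is that the inclusion $S^{d}W^{*}\otimes S^{d}V^{*}\hookrightarrow S^{d}((W\otimes V)^{*})$ is by definition the map induced from $(W^{*})^{\otimes d}\otimes(V^{*})^{\otimes d}\cong(W^{*}\otimes V^{*})^{\otimes d}$, under which $\beta^{\otimes d}\otimes\gamma^{\otimes d}$ is sent to $(\beta\otimes\gamma)^{\otimes d}$, already symmetric and hence equal to $(\beta\otimes\gamma)^{d}$. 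With this correction your argument goes through.
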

\begin{proof}
A standard fact about the polarization is that $\overrightarrow{F}$ is a symmetric multi-linear form.  It is obvious that $\overrightarrow{(\gamma)^d}= (\gamma)^d$, because $(\gamma)^d$ is already symmetric and multi-linear.

So it remains to prove the first equality in the lemma, which we will do by induction on the number of terms in $F$.  Suppose $F$ is a monomial, $F = \mathbf{w}^{\mathbf{\alpha}} = w_{1}^{\alpha_{1}}\circ \dots \circ w_{n}^{\alpha_{n}}$. Then use the isomorphism $W^{\otimes d}\otimes V^{\otimes d}\simeq (W\otimes V)^{\otimes d}$, and write $\mathbf{w}^\alpha \otimes \gamma^d =  (w_{1}^{\alpha_{1}}\otimes \gamma^{\alpha_{1}})\circ \dots \circ (w_{n}^{\alpha_{n}}\otimes \gamma^{\alpha_{n}})  = (w_{1}\otimes \gamma)^{\alpha_{1}}\circ \dots \circ (w_{n}\otimes \gamma)^{\alpha_{n}} = (\mathbf{w}\otimes \gamma)^{\mathbf{\alpha}}$.

 If $F$ is not a monomial, suppose $F= F_{1}+ F_{2}$ with $F_{i}$ nonzero polynomials for $i=1,2$ each having strictly fewer monomials than $F$. It is clear that $\overrightarrow{F_{1}+F_{2}} = \overrightarrow{F_{1}}+ \overrightarrow{F_{2}}$. Also, the operation $\otimes \gamma^{d}$ is distributive.  So $\overrightarrow{F\otimes (\gamma)^d} = \overrightarrow{F_{1}\otimes \gamma^{d}}+ \overrightarrow{F_{2}\otimes \gamma^{d}}$.  By the induction hypothesis, we know that  $\overrightarrow{F_{i}\otimes \gamma^{d}} =  \overrightarrow{F_{i}}\otimes \gamma^{d}$ for $i=1,2$. We conclude that $\overrightarrow{F_{1}\otimes \gamma^{d}}+ \overrightarrow{F_{2}\otimes \gamma^{d}} = (\overrightarrow{F_{1}}+\overrightarrow{F_{2}})\otimes \gamma^{d} = \overrightarrow{F}\otimes \gamma^{d}$.
\end{proof}

The following lemma was inspired by methods found in \cite{LM03}. It is a geometric description of the zero set of an augmented module.
\begin{lemma}[Augmentation Lemma]\label{lem:augment}
Let $W$ and $V$ be complex vector spaces with $\dim(V) \geq 2$. Let $X\subset \PP W$ be a variety and let $\I_{d}(X) = \I(X)\cap S^{d}W^{*}$ be the vector space of degree $d$ polynomials in the ideal $\I(X)$. Then 
\begin{equation}\label{step}
\V(\I_{d}(X)\otimes S^{d}V^{*})=Seg(\V(\I_{d}(X)) \times \PP V) \cup \bigcup_{L\subset \V(\I_{d}(X))}\PP (L \otimes  V), 
\end{equation}
where $L\subset  \V(\I_{d}(X))$ are linear subspaces.
\end{lemma}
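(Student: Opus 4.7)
The plan is to reduce the lemma to a single clean evaluation identity and then argue by the rank of $z$ viewed as a linear map. Since $S^{d}V^{*}$ is spanned by $d$-th powers, the zero set $\V(\I_{d}(X) \otimes S^{d}V^{*})$ coincides with the common zero set of the polynomials $f \otimes \gamma^{d}$ for $f \in \I_{d}(X)$ and $\gamma \in V^{*}$. The first step is to view each tensor $z \in W \otimes V$ as a linear map $\tilde z : V^{*} \to W$ (contraction of $z$ against the second factor), and to establish the evaluation identity
\[
(f \otimes \gamma^{d})(z) \;=\; f(\tilde z(\gamma)).
\]
This follows from Lemma~\ref{lem:polar}, which gives $\overrightarrow{f\otimes\gamma^{d}} = \overrightarrow{f}\otimes\gamma^{d}$: expanding $z = \sum_{i} w_{i} \otimes e_{i}$ in a basis of $V$ and applying multilinearity of $\overrightarrow{f}$ collapses the sum $\sum \overrightarrow{f}(w_{i_{1}},\dots,w_{i_{d}})\gamma(e_{i_{1}})\cdots\gamma(e_{i_{d}})$ to $\overrightarrow{f}(\tilde z(\gamma),\dots,\tilde z(\gamma))$, which recovers $f(\tilde z(\gamma))$.

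Granting the evaluation identity, the condition $[z]\in\V(\I_{d}(X)\otimes S^{d}V^{*})$ becomes equivalent to the statement that the image $L_{z} := \tilde z(V^{*}) \subset W$ is a linear subspace contained in $\widehat{\V(\I_{d}(X))}$. Both containments of the lemma now drop out of this reformulation. For $\supseteq$: a Segre point $w\otimes v$ has image $\CC w \subset \widehat{\V(\I_{d}(X))}$, and any $z \in L \otimes V$ with $L\subset\widehat{\V(\I_{d}(X))}$ a linear subspace has image $L_{z} \subset L$, so both families of points satisfy the condition. For $\subseteq$: given $[z]$ with $L_{z} \subset \widehat{\V(\I_{d}(X))}$, I split by $\dim L_{z}$. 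If $\dim L_{z} = 1$, then $L_{z} = \CC w$ with $[w]\in\V(\I_{d}(X))$, and $\tilde z(\gamma) = \lambda(\gamma) w$ for a unique nonzero $\lambda \in V$, so $z = w\otimes \lambda$ lies in the Segre part. Otherwise $L_{z}$ is a linear subspace of $W$ of dimension at least $2$ contained in $\widehat{\V(\I_{d}(X))}$, and $z \in L_{z}\otimes V$ by the very definition of $L_{z}$, so $[z]$ lies in the linear-subspace part of the union.

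The main obstacle I anticipate is establishing the evaluation identity $(f\otimes\gamma^{d})(z) = f(\tilde z(\gamma))$ cleanly from Lemma~\ref{lem:polar}, since one has to be careful about the normalization conventions relating polynomials and their polarizations; everything after that is a one-line reformulation followed by a short case split on the rank of $\tilde z$. The hypothesis $\dim V \geq 2$ appears not to be strictly needed for the statement to hold: it serves only to ensure that tensors of rank at least $2$ actually exist so that the linear-subspace case is genuinely distinct from the Segre case, but the equality itself continues to hold trivially when $\dim V = 1$.
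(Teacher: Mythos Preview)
Your proof is correct and at its core uses the same two ingredients as the paper: the fact that $S^{d}V^{*}$ is spanned by $d$-th powers $\gamma^{d}$, and Lemma~\ref{lem:polar} to compute $(f\otimes\gamma^{d})(z)$ via polarization. The difference is packaging. The paper argues each inclusion separately, writing $z=\sum_{i} x_{i}\otimes a_{i}$ in a fixed basis of $V$ and then, for ``$\subseteq$'', letting $\gamma=\sum_{i} t_{i}\check{a}_{i}$ vary and computing directly that $(f\otimes\gamma^{d})(z)=f(t_{1}x_{1}+\dots+t_{k}x_{k})$; this is exactly your identity $(f\otimes\gamma^{d})(z)=f(\tilde z(\gamma))$ written out in coordinates, since $\tilde z(\gamma)=\sum_{i} t_{i}x_{i}$. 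Your contraction-map formulation has the advantage of compressing both inclusions into the single criterion ``$L_{z}=\tilde z(V^{*})\subset\widehat{\V(\I_{d}(X))}$'', after which the case split on $\dim L_{z}$ is immediate; the paper's version has the advantage of being completely explicit about the polarization bookkeeping (your caution about normalization constants is warranted but harmless, since any nonzero scalar is irrelevant for the zero set). Your remark that the hypothesis $\dim V\geq 2$ is not actually used for the equality is correct; the paper's proof also does not invoke it in any essential way.
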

Note that since the linear spaces $L$ can be one dimensional, we do have 
\[
Seg(\V(\I_{d}(X)) \times \PP V) \subset \bigcup_{L\subset \V(\I_{d}(X))}\PP (L \otimes V) 
,\]
 and we will use Lemma~\ref{lem:augment} with the two terms on the right hand side of \eqref{step} combined, but we keep the two parts separate for emphasis here.

\begin{remark} Note that if $\I(X)$ is generated in degree no larger than $d$, then one can replace $ \V(\I_{d}(X))$ with $X$ in the statement of Lemma~\ref{lem:augment}.  We will use the result of Lemma~\ref{lem:augment} with the induction hypothesis  that $\V(HD) = Z_{n}$ and obtain a description of the zero set $\V(HD\otimes S^{4}V_{n+1})$ in terms of the geometry of $Z_{n}$.
\end{remark}

\begin{proof}[Proof of Lemma~\ref{lem:augment}]
First we prove ``$\supseteq$''.
Suppose $\dim(V)=n\geq 2$. Recall that we can choose a basis of  $S^{d}V^{*}$ consisting of $d^{th}$ powers of linear forms, $\{(\gamma_{1})^{d}, \dots ,(\gamma_{r})^{d}\}$, where $r = \binom{n+d-1}{d}$ and the $\gamma_{i}$ are in general linear position.  It suffices to work on a basis of the vector space $\I_{d}(X)\otimes S^{d}V^{*}$.  We choose a basis consisting of polynomials of the form $f\otimes \gamma^{d}$, with $f\in \I_{d}(X)$ and $\gamma \in V^{*}$.

Suppose $[x\otimes a]\in Seg( \V(\I_{d}(X)) \times \PP V)$ and evaluate $(f\otimes \gamma^{d})(x\otimes a)  = f(x)\gamma^{d}(a)$.  But $x \in  \V(\I_{d}(X))$, so $f(x)=0$ for every $f\in \I_{d}(X)$, and in particular, $[x\otimes a] \in \V(\I_{d}(X)\otimes S^{d}V^{*})$. So we have established that $\V(\I_{d}(X)\otimes S^{d}V^{*})\supset Seg( \V(\I_{d}(X)) \times \PP V)$.

Now suppose $[v]\in \PP (L \otimes V)$ for some linear subspace $L=span\{x_{1}, \dots ,x_{l}\} \subset  \V(\I_{d}(X))$.  By expanding an expression of $[v]$ in bases and collecting the coefficients of the $x_{i}$, we can write $[v]=[x_{1}\otimes a_{1}+ \dots +x_{l}\otimes a_{l}]$ for $ a_{i}\in  V$ not all zero.  Consider $f\otimes \gamma^{d} \in \I_{d}(X)\otimes S^{d}V$.
By Lemma~\ref{lem:polar}, $\overrightarrow{f\otimes \gamma^{d}}=\overrightarrow{f}\otimes \gamma^{d}$ and using the polarization formula \eqref{evaluate}, we write
\[\left(f\otimes \gamma^{d}\right)(v) = \left(\overrightarrow{f}\otimes \gamma^{d}\right)(v, \dots ,v) = \sum_{\mathbf{\beta}} \frac{1}{\mathbf{\beta}!}\overrightarrow{f}(\mathbf{x^{\beta}}) \gamma^{d}\mathbf{(a^{\beta})}
.\]
The choice of $L\subset \V(\I_{d}(X))$ means that $L \subset \V(f)$, so by Lemma~\ref{lem:linear condition}, $\overrightarrow{f}(\mathbf{x^{\beta}}) = 0$ for all $\mathbf{\beta}$. Every term of $(f\otimes \gamma^{d})(v)$ vanishes so $(f\otimes \gamma^{d})(v)=0$, and hence $[v]\in \V(\I_{d}(X)\otimes S^{d}V^{*})$.
So we have established that $\V(\I_{d}(X)\otimes S^{d}V^{*})\supset \PP (L \otimes V) 
$ for all linear subspaces $L\subset \V(\I_{d}(X))$.

Now we prove ``$\subseteq$''.
Consider any $[v]\in \PP(W\otimes V)$. Choose a basis $\{a_{1},\dots, a_{k}\}$ of $V$ (by assumption $k\geq 2$). Then expand the expression of $v$ in bases and collect the coefficients of each $a_{i}$ to find $[v] = [x_{1}\otimes a_{1}+\dots + x_{k}\otimes a_{k}]$ with $x_{1},\dots, x_{k} \in W$ and not all $x_{i}$ zero.  

We need to show that $[v] \in \PP (L \otimes  V)$ for a linear space $L \subset \V(\I_{d}(X))$.  The natural linear space to consider is $L=span\{x_{1},\dots, x_{k}\}$.  Since we already have an expression $[v]=[x_{1}\otimes a_{1}+ \dots + x_{k}\otimes a_{k}]$, if we can show that $L=span\{x_{1},\dots, x_{k}\} \subset  \V(\I_{d}(X))$, we will be done.

For any $f\otimes \gamma ^{d} \in \I_{d}(X)\otimes S^d V^*$  we can write
\begin{equation}\label{star}
0 =(f\otimes \gamma ^{d})(v)= \sum_{\mathbf{\beta}} \frac{1}{\mathbf{\beta}!} \overrightarrow{f}(\mathbf{x^{\beta}}) \gamma^{d}\mathbf{(a^{\beta})}
.\end{equation}
Let $\{\check{a_{1}},\dots,\check{a_{k}}\}$ be basis of $V^{*}$ dual to $\{a_{1},\dots, a_{k}\}$.  Then let $\gamma$ vary continuously in $V^{*}$ by writing it as
\[
\gamma = t_{1}\check{a_{1}}+\dots+t_{k}\check{a_{k}}
\]
where the parameters $t_{i}\in \CC$ vary.
The polynomial $\gamma^{d}$ is simple enough that we can expand it as follows:
\[
\gamma^{d}\mathbf{(a^{\beta})} = \gamma^{d}(a_{1}^{\beta_{1}},\dots,a_{k}^{\beta_{k}}) = \gamma(a_{1})^{\beta_{1}}\dots \gamma(a_{k})^{\beta_{k}}
\]
But our choices have made it so that $\gamma(a_{i}) = t_{i}$, and therefore $\gamma^{d}(\mathbf{a^{\beta}}) = \mathbf{t^{\beta}}$.  So \eqref{star} becomes
\[
0 =(f\otimes \gamma ^{d})(v)= \sum_{\mathbf{\beta}} \frac{1}{\mathbf{\beta}!} \overrightarrow{f}(\mathbf{x^{\beta}}) \mathbf{t^{\beta}} = f(t_{1}x_{1}+\dots+t_{k}x_{k})
,\]
where we have used Lemma~\ref{lem:polar}.  So $f(t_{1}x_{1}+\dots+t_{k}x_{k})=0$ for all $t_{i}\in \CC$ and this is an equivalent condition that $L=span\{x_{1},\dots,x_{k}\}$ is a subspace of $\V(f)$.  Since this was done for arbitrary $f\in \I_{d}(X)$, we conclude that $L\subset \V(\I_{d}(X))$.
\end{proof}

Now we can apply this geometric characterization of augmentation to the hyperdeterminantal module.  To do this we need to set up more notation.

Assume $n\geq 4$. Let $HD_{i}$ be the image of the hyperdeterminantal module at stage $n-1$ under the following re-indexing isomorphism \[
S^{4}(V_{1}^{*}\otdot V_{n-1}^{*}) \longrightarrow S^{4}(V_{1}^{*}\otdot V_{i-1}^{*}\otimes V_{i+1}^{*}\otdot V_{n}^{*}),
\]
where we still have $n-1$ vector spaces $V_{i}\simeq \CC^{2}$, but we have shifted the index on the last $n-i$ terms.
Then the hyperdeterminantal module at stage $n$ can be expressed as a sum of augmented modules as follows: 
\[HD=\sum_{i=1}^{n} (HD_{i}\otimes S^{4}V_{i}^{*}).\]

Finally note that if $\dim(V)=k$, then $\sigma_s(\PP W\times \PP V) = \PP(W\otimes V)$ for all $s\geq k$.  In the case $V_{i}\simeq \CC^{2}$, we have $\PP( L \otimes V_{i}) = \sigma_{2}(\PP L \times \PP V_{i})$.  Certainly 
\[
Seg(\V(M_{i}) \times \PP V_{i}) \subset \bigcup_{L\subset  V(M_{i})} \PP( L \otimes V_{i})  
,\] for any modules of polynomials $M_{i}$.
If $L\subset  \V(\I_{d}(X))$, then $\sigma_s(\PP L\times \PP V) \subseteq \sigma_s( \V(\I_{d}(X))\times \PP V)$. 
If $A,B,C$ are vector spaces of polynomials such that $C=A + B$ then $\V(C) = \V(A)\cap \V(B)$. Collecting these ideas, we apply the Augmentation Lemma~\ref{lem:augment} to the hyperdeterminantal module to yield the following:

\begin{lemma}[Characterization Lemma]\label{lem:characterization}
Consider $\sum_{i=1}^{n} HD_{i}\otimes S^{d} V_{i}^{*} \subset S^{d}(V_{1}^{*}\otdot V_{n}^{*})$. Then 
\begin{equation*} 
\V\left(\sum_{i=1}^{n} HD_{i}\otimes S^{d} V_{i}^{*} \right) 
=\bigcap_{i=1}^{n} \left(\bigcup_{L\subset  V(HD_{i})} \PP( L \otimes V_{i})  \right)
\subseteq  
\bigcap_{i=1}^{n} \left( \sigma_{2} (\V(HD_{i}) \times \PP V_{i}) \right).
\end{equation*}
\end{lemma}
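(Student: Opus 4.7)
The plan is to apply the Augmentation Lemma to each summand $HD_i \otimes S^d V_i^*$ separately and then assemble the resulting descriptions via the elementary identity $\V(A+B) = \V(A) \cap \V(B)$.

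First, I would invoke Lemma~\ref{lem:augment} with $W = V_1 \otdot V_{i-1} \otimes V_{i+1} \otdot V_n$, $V = V_i$ (which has dimension $2 \geq 2$), and the variety $X = \V(HD_i) \subset \PP W$. Since $HD_i$ is a space of degree-$d$ polynomials, $HD_i \subset \I_d(\V(HD_i))$, and Lemma~\ref{lem:augment} (applied to the space $HD_i$ in place of the full $\I_d(X)$, which works verbatim since the proof only uses that the polynomials involved vanish on $X$ and on linear subspaces $L \subset \V(HD_i)$) gives
\[
\V(HD_i \otimes S^d V_i^*) = Seg(\V(HD_i) \times \PP V_i) \cup \bigcup_{L \subset \V(HD_i)} \PP(L \otimes V_i).
\]
As already noted in the text, the Segre piece is subsumed by the union over linear subspaces (take $L$ one-dimensional), so this simplifies to
\[
\V(HD_i \otimes S^d V_i^*) = \bigcup_{L \subset \V(HD_i)} \PP(L \otimes V_i).
\]

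Second, I would use the fact that zero sets turn sums of modules into intersections of zero sets: for any collection of vector spaces of polynomials $M_1, \dots, M_n$ on a common ambient space, $\V(M_1 + \cdots + M_n) = \V(M_1) \cap \cdots \cap \V(M_n)$. Applying this to $M_i = HD_i \otimes S^d V_i^*$ and substituting the description above yields
\[
\V\!\left(\sum_{i=1}^n HD_i \otimes S^d V_i^*\right) = \bigcap_{i=1}^n \V(HD_i \otimes S^d V_i^*) = \bigcap_{i=1}^n \bigcup_{L \subset \V(HD_i)} \PP(L \otimes V_i),
\]
which is the claimed equality.

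Finally, for the inclusion into $\bigcap_i \sigma_2(\V(HD_i) \times \PP V_i)$, I would use the observation recorded just before the lemma statement: since $\dim V_i = 2$, for any linear subspace $L$ we have $\PP(L \otimes V_i) = \sigma_{\dim L}(\PP L \times \PP V_i) \subseteq \sigma_2(\PP L \times \PP V_i)$ (with the understanding that $\sigma_s$ stabilizes once $s$ exceeds the dimension of a factor), and then $L \subset \V(HD_i)$ implies $\sigma_2(\PP L \times \PP V_i) \subseteq \sigma_2(\V(HD_i) \times \PP V_i)$. Taking unions over $L$ and intersections over $i$ gives the desired containment. I do not expect a serious obstacle here: once Lemma~\ref{lem:augment} is in hand, the entire statement is essentially a bookkeeping exercise combining the Augmentation Lemma with the sum/intersection duality for zero sets, and the secant-variety inclusion is automatic from $\dim V_i = 2$.
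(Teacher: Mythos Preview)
Your proposal is correct and follows essentially the same approach as the paper: apply the Augmentation Lemma~\ref{lem:augment} to each summand $HD_i\otimes S^dV_i^*$, absorb the Segre piece into the union over linear spaces, use $\V(A+B)=\V(A)\cap\V(B)$ to pass from the sum to the intersection, and invoke $\dim V_i=2$ for the secant inclusion. Your remark that Lemma~\ref{lem:augment} applies verbatim with $HD_i$ in place of $\I_d(X)$ is exactly the tacit step the paper makes when it says ``we apply the Augmentation Lemma to the hyperdeterminantal module.''
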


\begin{remark} A consequence of the characterization lemma is the following test for non-membership in the zero-set of $HD$. Suppose $[z]=[\zeta^{1}\otimes x_{i}^{1} + \zeta^{2}\otimes x_{i}^{2}]\in \PP^{2^n-1}$.  If either $[\zeta^{1}] $ or  $[\zeta^{2}]$ is not a vector of principal minors of an $(n-1) \times (n-1)$ symmetric matrix, then $[z]$ is not a zero of the hyperdeterminantal module $HD$ and hence not a vector of principal minors of a symmetric matrix since $\V(HD)\supset Z_{n}$. This observation can be iterated, and each iteration cuts the size of the vector in question in half until one only need to check honest hyperdeterminants of format $2 \times 2 \times 2$.  This test, while relatively cheap and accessible, is necessary but not sufficient as is pointed out in \cite{HSt}.

It is well known that the ideal of the Segre product of an arbitrary number of projective spaces is generated in degree $2$ by the $2 \times 2$ minors of flattenings.  In essence, this is saying that all of the polynomials in the ideal come from the Segre products of just two projective spaces.  The following is a weaker, strictly set-theoretic result in the same spirit. It is another application of the Augmentation Lemma~\ref{lem:augment}, and its proof is mimicked in the proof of Lemma~\ref{lem:one piece n} below.
\begin{prop}\label{prop:segre ideal} For $1\leq i \leq n$, let $V_{i}$ be complex vector spaces each with dimension $\geq 2$ and assume $n\geq 2$.
If for each $i$, $B_{i}\subset S^d(V_{1}^*\otimes  \dots V_{i-1}^*\otimes V_{i+1}^* \otimes   \dots  \otimes V_{n}^{*})$ is a set of polynomials with the property 
\[
\V(M^{i})= Seg (\PP V_{1}\times \dots   \PP V_{i-1}\times  \PP V_{i+1}\times \dots  \times \PP V_{n}),\] then
\[
\V\left(\bigoplus_{i} (M_{i}\otimes S^d V_{i}^{*})\right) = Seg (\PP V_{1}\times \dots  \times \PP V_{n})
.\]
\end{prop}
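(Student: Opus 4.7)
The plan is to prove both set-theoretic inclusions; I focus on the interesting case $n\geq 3$.

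For $\supseteq$, take any decomposable point $[v_{1}\otdot v_{n}]\in Seg(\PP V_{1}\tdt\PP V_{n})$. A basis of $M_{i}\otimes S^{d}V_{i}^{*}$ can be chosen consisting of vectors $f\otimes \gamma^{d}$ with $f\in M_{i}$ and $\gamma\in V_{i}^{*}$, and the standard factoring
\[
(f\otimes \gamma^{d})(v_{1}\otdot v_{n}) = f\bigl(v_{1}\otdot v_{i-1}\otimes v_{i+1}\otdot v_{n}\bigr)\cdot \gamma(v_{i})^{d}
\]
vanishes because the first factor lies in $\V(M_{i})=Seg(\PP V_{1}\tdt\PP V_{i-1}\times \PP V_{i+1}\tdt\PP V_{n})$. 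Running over $i$, the forward inclusion follows.

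For $\subseteq$, suppose $[z]\in \bigcap_{i}\V(M_{i}\otimes S^{d}V_{i}^{*})$. I would apply the Augmentation Lemma~\ref{lem:augment} to each summand (its proof uses only that one is polarizing a set of polynomials, so it works verbatim with $M_{i}$ in place of $\I_{d}(X)$): for every $i$ there is a linear subspace $L_{i}\subset \V(M_{i})$ with $z\in L_{i}\otimes V_{i}$. Expanding $z=\sum_{k}\ell_{k}\otimes v_{k}$ with $\ell_{k}\in L_{i}$ shows that the image $T_{i}(z)$ of the $i$-th flattening map $V_{i}^{*}\to V_{1}\otdot V_{i-1}\otimes V_{i+1}\otdot V_{n}$ of $z$ lies in $L_{i}$, and thus is itself a linear subspace of $Seg(\PP V_{1}\tdt\PP V_{i-1}\times \PP V_{i+1}\tdt\PP V_{n})$.

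The key geometric input is the classical fact that every linear subspace of a Segre variety on $m\geq 2$ factors is contained in a \emph{pencil}, i.e.\ a linear space of the form
\[
v_{a_{1}}\otdot v_{a_{k-1}}\otimes V_{a_{k}}\otimes v_{a_{k+1}}\otdot v_{a_{m}}
\]
for some index $k$ and fixed vectors $v_{a_{l}}\in V_{a_{l}}$ ($l\neq k$); this follows by induction on $m$ from the $m=2$ case that a linear subspace of rank-one matrices must fix one factor. Applied to $T_{i}(z)$, the pencil classification produces an index $k(i)\neq i$ such that $z$ has rank one in every factor $j\notin \{i,k(i)\}$. Let $S=\{j:z\text{ is not rank-one in factor }j\}$; we have $S\subset \{i,k(i)\}$ for every $i$, so $|S|\leq 2$, and if $|S|=2$ then choosing any $i\notin S$ (possible since $|S|\leq 2<n$ when $n\geq 3$) forces $S\subset \{k(i)\}$, a contradiction. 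Hence $|S|\leq 1$, so $z$ has rank one in at least $n-1$ factors, and a short induction -- peeling off one factor at a time -- upgrades this to full decomposability $z=v_{1}\otdot v_{n}$.

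The main obstacle will be the pencil classification of linear subspaces of the Segre variety; once that is in place, the counting argument on $S$ and the inductive upgrade from ``rank one in $n-1$ factors'' to ``fully decomposable'' are routine.
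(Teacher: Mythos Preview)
Your proof is correct and follows essentially the same route as the paper: apply the Augmentation Lemma to each summand, invoke the pencil classification of linear subspaces on a Segre product, and intersect. Your counting argument on the set $S$ is a more explicit version of the paper's terse instruction to ``compute the intersection \ldots\ and notice that in the intersection of just $3$ factors, all of the resulting linear spaces must live in $Seg(\PP V_{1}\tdt\PP V_{n})$.''
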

\begin{proof}
Work by induction and use the Augmentation Lemma~\ref{lem:augment}. It is clear that $\V(\bigoplus_{i} (M_{i}\otimes S^d V_{i}^{*})) \supset Seg (\PP V_{1}\times \dots  \times \PP V_{n})$.  All the linear spaces on $Seg (\PP V_{1}\times \dots  \times \PP V_{n})$ are (up to permutation) of the form $V_{1}\otimes \widehat{a_{2}}\otdot \widehat{a_{n}}$ where $a_{i}\in V_{i}$ are nonzero and $\widehat{a_{i}}$ denotes the line through $a_{i}$.  Then compute the intersection, $\bigcup_{L^i} \bigcap_{i=1}^n \PP(L^i\otimes V_{i})$, and notice that in the intersection of just 3 factors, all of the resulting linear spaces must live in $Seg (\PP V_{1}\times \dots  \times \PP V_{n})$.
\end{proof}

\end{remark}


\section{Understanding the case when two zeros of the hyperdeterminantal module disagree in precisely one coordinate.}\label{sec:understanding}

In the proof of Theorem~\ref{thm:Main Theorem} below we work to construct a matrix whose principal minors are a given point in the zero set of the hyperdeterminantal module.  The main difficulty is the following.
Suppose we have a point $[z]\in \V(HD)$ and a candidate matrix $A$ that satisfies $\Delta_{I}(A) = z_{I}$ for all $I\neq [1, \dots ,1]$.  In other words, all of the principal minors of $A$ except possibly for the determinant agree with the entries of $z$.  What can we say about $z$?

To answer this question, we must study the points in $\widehat{\V(HD)}$ that have all of their coordinates except one equal.  Geometrically, we need to understand the points for which a line in the coordinate direction $X^{[1,\dots,1]}$ above the point $z$ intersects $\widehat{\V(HD)}$ in at least two points.  We answer this question in Lemma~\ref{lem:almost} below.  Using that lemma, we find the following
\begin{prop}\label{prop:no almost}
 Let $n\geq 4$. Suppose $z = z_{I}X^{I}$ and $w = w_{I}X^{I}$ are points in $\widehat{\V(HD)}$.  If $z_{I}=w_{I}$ for all $I\neq [1, \dots ,1]$ and $z_{[0,\dots,0]} \neq 0$, then $z = w$.
\end{prop}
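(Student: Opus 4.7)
The plan is to induct on $n$, with base case $n=4$ supplied by Lemma~\ref{lem:almost}. For the inductive step with $n\geq 5$, the idea is to slice along the last tensor factor $V_n$, apply the Characterization Lemma~\ref{lem:characterization} to reduce to a statement on $n-1$ factors, and use a preliminary $\G$-transformation to sidestep a potential degeneracy.

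Concretely, I would write $z = \zeta^1\otimes x_n^0 + \zeta^2\otimes x_n^1$ and $w = \omega^1\otimes x_n^0 + \omega^2\otimes x_n^1$ with $\zeta^j,\omega^j \in V_1\otdot V_{n-1}$. Since $X^{[1,\dots,1]} = Y\otimes x_n^1$ where $Y$ is the all-ones tensor on the first $n-1$ factors, the hypothesis $z-w = c\,X^{[1,\dots,1]}$ forces $\zeta^1 = \omega^1$ and $\zeta^2 - \omega^2 = cY$. The Characterization Lemma~\ref{lem:characterization} applied at index $n$ places both $\zeta^2$ and $\omega^2$ in $\V(HD_n)$, the zero set of the hyperdeterminantal module on $V_1,\dots,V_{n-1}$. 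The coordinate $(\zeta^2)_{[0,\dots,0]}$ equals $z_{[0,\dots,0,1]}$. If this is nonzero, the inductive hypothesis applied to $\zeta^2$ and $\omega^2$ on $n-1$ factors gives $cY = 0$, hence $c=0$ and we are done.

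If instead $z_{[0,\dots,0,1]} = 0$, I would first apply a $\G$-transformation to restore nonvanishing without disturbing the other hypotheses. Consider the unipotent element $g_n \in \SL(V_n)$ defined by $x_n^0 \mapsto x_n^0 + b\, x_n^1$ and $x_n^1 \mapsto x_n^1$. Since $g_n$ fixes $X^{[1,\dots,1]}$, the pair $(g_n.z,\, g_n.w)$ still satisfies $g_n.z - g_n.w = c\, X^{[1,\dots,1]}$, and by $\G$-invariance both lie in $\widehat{\V(HD)}$. A direct computation shows $(g_n.z)_{[0,\dots,0]} = z_{[0,\dots,0]}$ is preserved (hence nonzero), while $(g_n.z)_{[0,\dots,0,1]} = z_{[0,\dots,0,1]} + b\, z_{[0,\dots,0]}$, which is nonzero for generic $b$. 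Replacing $z,w$ by $g_n.z,\, g_n.w$ reduces to the previous case.

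The main obstacle is the base case, Lemma~\ref{lem:almost}, whose proof (according to the outline) hinges on the lowering-operator algorithm of Section~\ref{sec:lowering}. I would expect it to produce explicit polynomials in the hyperdeterminantal module at stage $4$ whose restrictions to points with $z_{[0,0,0,0]}\neq 0$ are nontrivial and controlled in $z_{[1,1,1,1]}$, thereby pinning down that coordinate once the others are fixed. The slicing step above is routine once the characterization lemma is in hand, but extracting the rigidity at $n=4$ requires genuine representation-theoretic work.
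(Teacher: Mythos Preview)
Your inductive step for $n\geq 5$ is correct and clean: the Characterization Lemma~\ref{lem:characterization} does force $\zeta^2,\omega^2\in\widehat{\V(HD_n)}$, the unipotent $g_n$ repairs a possible vanishing of $z_{[0,\dots,0,1]}$ while fixing $X^{[1,\dots,1]}$, and the induction hypothesis at stage $n-1$ then gives $c=0$.

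The gap is in the base case. Lemma~\ref{lem:almost} does \emph{not} by itself yield Proposition~\ref{prop:no almost} at $n=4$: it only says that if $z_{[1,1,1,1]}\neq w_{[1,1,1,1]}$ then $[z]$ and $[w]$ land in a union of Segre products of $\PP^1$'s and $\PP^3$'s inside $Z_4$. That alone is no contradiction. The paper closes the gap with a short matrix-theoretic argument: by Proposition~\ref{prop:ZZ block}, a point of such a Segre product with $z_{[0,\dots,0]}\neq 0$ is the principal-minor vector of a block-diagonal symmetric matrix with $1\times 1$ and $2\times 2$ blocks; such a matrix is (up to permutation) tri-diagonal, so its principal minors are insensitive to the signs of the off-diagonal entries and are therefore determined by the $0\times 0$, $1\times 1$, and $2\times 2$ minors. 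That forces $z_{[1,1,1,1]}=w_{[1,1,1,1]}$. Your last paragraph also mischaracterizes Lemma~\ref{lem:almost}: the polynomials built there are not in $HD$ but in an auxiliary module $B$ assembled from the coefficients of $(X^{[1,\dots,1]})^j$ in elements of $HD$, and the conclusion is geometric membership in Segre products, not a direct equation pinning down $z_{[1,\dots,1]}$.

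Once you supply that block-diagonal argument at $n=4$, your proof is complete and takes a genuinely different route from the paper's. The paper invokes Lemma~\ref{lem:almost} for every $n\geq 4$ and finishes with the block-diagonal argument in one stroke; you only need Lemma~\ref{lem:almost} and the matrix argument at $n=4$, trading the general-$n$ analysis behind Proposition~\ref{prop:zerosB} (the module $S_{(4,1)}S_{(4,1)}S_{(4,1)}S_{(5)}\dots S_{(5)}$ and its zero set) for a direct induction on Proposition~\ref{prop:no almost} itself via the Characterization Lemma and an $\SL(2)$ shear. Your version localizes the hard representation-theoretic work to the smallest case; the paper's version is uniform in $n$ but must carry the auxiliary module and its zero-set description through all $n$.
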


For the rest of this section will use the following notation.  If $K=\{k_{1},\dots,k_{s}\} \subset \{1,\dots,n\}$ and $1\leq k_{j}\leq n$ for all $j$, then let $V_{K}\simeq V_{k_{1}}\otdot V_{k_{s}}$ for $s\leq n$.  We assume $V_{k}\simeq \CC^{2}$ for all $k$, so that $V_{K} \simeq (\CC^{2})^{\otimes s}$.  Let $\mathcal{P}^{2}(\{n_{1},\dots,n_{n}\})$ denote the collection of all partitions of $\{n_{1},\dots,n_{n}\}$ into mutually disjoint subsets of cardinality $2$ or less, \ie $\mathcal{P}^{2}(\{1,\dots,n\})$ consists of the sets $\{K_{1},\dots,K_{m}\}$ such that $K_{p}\subset \{1,\dots,n\}$ and $|K_p| \leq 2$ for every $1\leq p \leq m$,  $K_{p}\cap K_{q}=\emptyset$ whenever $p\neq q$, and $\cup_{p=1}^{m} K_{p}=\{1,\dots,n\}$.

\begin{lemma}\label{lem:almost}  Let $n\geq 4$. Suppose $z = z_{I}X^{I}$ and $w = w_{I}X^{I}$ are points in $\widehat{\V(HD)}$ . If $z_{I}=w_{I}$ for all $I\neq [1, \dots ,1]$ but $z_{[1, \dots ,1]}\neq w_{[1, \dots ,1]}$, then 
\[
[z],[w]\in \bigcup_{\{K_{1},\dots,K_{m}\}\in \mathcal{P}^{2}(\{1,\dots,n\})} 
 Seg\left( \PP V_{K_{1}} \tdt \PP V_{K_{m}} \right)  
\subset Z_{n}.\]
\end{lemma}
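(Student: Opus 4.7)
The plan is to induct on $n \geq 4$, with base case $n = 4$ handled by direct analysis: $HD$ has dimension $20$ in the $n=4$ setting, occurs with multiplicity one by Proposition~\ref{prop:multiplicity}, and the claim can be verified concretely by writing down representatives of the $\G$-orbit of the $2\times 2\times 2$ hyperdeterminant and checking that any two zeros of these polynomials which differ only at $[1,1,1,1]$ must lie in the required union of Segre products.  For the inductive step, fix $n \geq 5$ and assume the lemma holds in the $(n-1)$-fold tensor setting.

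For each $i \in \{1,\dots,n\}$ I slice $z = \zeta_i^0 \otimes x_i^0 + \zeta_i^1 \otimes x_i^1$ and $w = \omega_i^0 \otimes x_i^0 + \omega_i^1 \otimes x_i^1$.  The hypothesis $z_I = w_I$ for $I \neq [1,\dots,1]$ gives $\omega_i^0 = \zeta_i^0$ and $\omega_i^1 = \zeta_i^1 + \delta\, X^{J_i}$, where $\delta := w_{[1,\dots,1]} - z_{[1,\dots,1]} \neq 0$ and $J_i = [1,\dots,1]$ is the all-ones index in $V_{\{1,\dots,n\}\setminus\{i\}}$.  By the Characterization Lemma~\ref{lem:characterization} applied separately to $z$ and to $w$, for every $t \in \CC$ both $u(t) := \zeta_i^0 + t\zeta_i^1$ and $u(t) + t\delta\, X^{J_i}$ lie in $\V(HD_i)$.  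For $t \neq 0$ these two points agree on every coordinate except $J_i$ and differ there, so the inductive hypothesis places each of them in the finite union $\bigcup_{\pi \in \mathcal{P}^2(\{1,\dots,n\}\setminus\{i\})} Seg_\pi$; Zariski closure then extends the membership to all $t \in \CC$.  Since the line $\{u(t) : t \in \CC\}$ is irreducible and the union is a finite union of irreducible closed varieties, the whole line must lie in a single component $Seg_{\pi^{(i)}}$, so in particular both $\zeta_i^0$ and $\zeta_i^1$ lie in $Seg_{\pi^{(i)}}$.

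The remaining step is to promote these slice-wise Segre memberships to a Segre membership for $z$.  If $\zeta_i^0$ and $\zeta_i^1$ are linearly dependent, then $z$ factors across $V_i$, and the partition of $\{1,\dots,n\}$ obtained by adjoining $\{i\}$ as a singleton to $\pi^{(i)}$ has all blocks of size $\leq 2$.  Otherwise, the line through $\zeta_i^0$ and $\zeta_i^1$ is a line inside a Segre variety, and the standard rigidity of lines in Segre varieties forces the two rank-one tensors to agree on every factor of $\pi^{(i)}$ except a single distinguished block $K_{k_i}^{(i)}$.  When $|K_{k_i}^{(i)}| = 1$, fusing this singleton with $\{i\}$ yields a valid size-$\leq 2$ partition of $\{1,\dots,n\}$ and places $z$ in the corresponding Segre.

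The main obstacle is the remaining case $|K_{k_i}^{(i)}| = 2$, which at face value would force a size-$3$ block in the partition and violate the conclusion.  To resolve it I would iterate the slicing argument along the two directions $j \in K_{k_i}^{(i)}$: the partitions $\pi^{(j)}$ obtained this way must be mutually compatible with $\pi^{(i)}$, and this consistency---together with the slightly enhanced form of Proposition~\ref{prop:segre ideal} alluded to in the introduction, applied to the polynomials defining the offending Segre factor---forces the block to split into singletons in at least one of the alternative slicings, producing an admissible size-$\leq 2$ partition of $\{1,\dots,n\}$ for $z$.  The symmetric argument applied to $w$ then completes the proof.
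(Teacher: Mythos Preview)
Your approach is genuinely different from the paper's, but it has real gaps at both ends.  The base case $n=4$ is asserted, not proved: ``writing down representatives of the $\G$-orbit and checking'' is the entire content of the lemma at $n=4$, and it is not routine.  More seriously, the inductive step is incomplete exactly where you flag the ``main obstacle.''  Slicing at $i$ and using rigidity of lines on Segre varieties correctly places $z$ in a Segre with one size-$3$ block $K_{k_i}^{(i)}\cup\{i\}$; but your proposed fix---slice again at $j\in K_{k_i}^{(i)}$ and invoke ``consistency''---does not work as stated, because the partition $\pi^{(j)}$ may itself carry a size-$3$ block and nothing forces $\pi^{(i)}$ and $\pi^{(j)}$ to be compatible.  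For instance, take $n=5$ and $z=\eta_{12}\otimes T_{345}$ with $\eta_{12}$ of rank~$2$: slicing at any $i\in\{3,4,5\}$ returns the same size-$3$ block $\{3,4,5\}$, so re-slicing inside that block gains nothing.  One could instead slice at some $j\notin K_{k_i}^{(i)}\cup\{i\}$ and try to show the resulting Segre membership forces $T$ to factor, but that requires a careful case analysis (how the blocks of $\pi^{(j)}$ meet $K_{k_i}^{(i)}\cup\{i\}$) that you have not supplied; the bare reference to Proposition~\ref{prop:segre ideal} does not fill this.

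The paper's argument avoids induction entirely and proceeds by representation theory.  Each $f\in HD$, after substituting $z_I$ for $I\neq[1,\dots,1]$, becomes a univariate polynomial of degree~$\le 2$ in $X^{[1,\dots,1]}$ (by a weight count); two distinct common roots force all these quadratics to be proportional, giving degree-$5$ conditions $a_f b_g - a_g b_f=0$ on $z$.  The $\G$-span of these conditions is shown, via the lowering-operator algorithm of Section~\ref{sec:lowering}, to contain the irreducible module $S_{(4,1)}S_{(4,1)}S_{(4,1)}S_{(5)}\cdots S_{(5)}$, and the zero set of that module is computed directly---using the Augmentation Lemma and the known orbit stratification of $\PP(V_1\otimes V_2\otimes V_3)$---to be exactly the claimed union of Segre products of $\PP^1$'s and $\PP^3$'s.
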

Note that the notationally dense Segre product is just a product of $\PP^{3}$'s and $\PP^{1}$'s.

\begin{proof}[Proof of Proposition~\ref{prop:no almost}] Assume Lemma~\ref{lem:almost}. Let $z = z_{I}X^{I}$ and $w = w_{I}X^{I}$ be points in $\widehat{\V(HD)} \cap \{z\mid z_{[0,\dots,0]} \neq 0\}$. Suppose that $z_{I}=w_{I}$ for all $I\neq [1, \dots ,1]$, and suppose for contradiction that $z_{[1, \dots ,1]}\neq w_{[1, \dots ,1]}$. Lemma~\ref{lem:almost} implies that $[z],[w]$ are in a Segre product of $\PP^{1}$'s and $\PP^{3}$'s.

Note that $Z_{1} \simeq \PP^{1}$ and $Z_{2}\simeq \PP^{3}$ and Proposition~\ref{prop:ZZ block} implies that a point $[A,t]$ with $t\neq 0$ mapping to $Seg\left( \PP V_{K_{1}} \tdt \PP V_{K_{m}} \right)$ with $\{K_{1},\dots,K_{m}\} \in \mathcal{P}^{2}(\{1,\dots,n\})$  is permutation equivalent to a block diagonal matrix consisting of $1\times 1$ and $2 \times 2$ blocks.  Moreover, such a block diagonal matrix is a special case of a symmetric tri-diagonal matrix, and therefore none of its principal minors depends on the sign of the off-diagonal terms.  So fixing the $0\times 0$, $1\times 1$ and $2 \times 2$ principal minors fixes the rest of the principal minors in such a matrix. If we take $z_{[0,\dots,0]} = w_{[0,\dots,0]} = 1$ and assume the $1\times 1$ and $2 \times 2$ principal minors agree, then the rest of the principal minors must agree, including the  determinants, thus the contradiction.

Note that the assumption $z_{[0,\dots,0]} \neq 0$ is necessary.  If $z_{[0,\dots,0]}=0$, then consider the image of any two matrices $A,B$ with different nonzero determinants under the principal minor map with $t=0$. Then $\varphi([A,0]) = [0,\dots, 0,\det(A)] \neq \varphi([B,0]) = [0,\dots,0,\det(B)]$.
\end{proof}

\begin{remark}
A key point here is that we are not making the claim in Proposition~\ref{prop:no almost} for $n=3$.  In this case any two zeros of the hyperdeterminant are principal minors of  $3\times 3$ matrices which differ up to sign of the off-diagonal terms. Altering the sign of the off-diagonal terms of a $3\times 3$ symmetric matrix can change the determinant without changing the other principal minors and without forcing the matrix to be blocked as a $2 \times 2$ block and a $1\times 1$ block.
\end{remark}
\begin{remark}
To see that the analog of Proposition~\ref{prop:no almost} holds for $Z_{n}$ with $n\geq 4$ and $t\neq 0$ requires much less work than the case of $\V(HD)$.  We used Maple to construct a generic symmetric $4\times 4$ matrix and computed its principal minors. Then we changed the signs of the off-diagonal terms in every possible combination and compared the number of principal minors that agreed with the principal minors of the original matrix.  The result was that the two vectors of principal minors could agree in precisely $11, 13$ or $16$ entries, but not $15$. (Though tedious, the $4\times 4$ case can also be proved without a computer by analyzing the parity of the various products of the off-diagonal terms in the matrix.) We repeated the experiment in the $5\times 5$ case and found that the two vectors could agree in precisely $16, 19, 20, 21, 23, 25$ or $32$ positions, but never $31$ positions.

The general case follows from the $4\times 4$ case by the following.  Suppose $n\geq 4$ and $2^{n}-1$ of the principal minors of an $n\times n$ symmetric matrix agree with the principal minors of another $n\times n$ symmetric matrix. Then we may assume that the $0\times 0$, $1\times 1$ and $2 \times 2$ principal minors of both matrices agree and hence the matrices must agree up to the signs of the off-diagonal terms.  Then use the group to move the one position where the principal minors don't agree to be a $4\times 4$ determinant and use the $4\times 4$ result for the contradiction. 
\end{remark}

To prove Lemma~\ref{lem:almost}, we will show that if $w_{I}=z_{I}$ for all $I\neq [1, \dots ,1]$ and  $z_{[1, \dots ,1]}\neq w_{[1, \dots ,1]}$, then $z$ is a zero of an auxiliary set of polynomials denoted $B$.  We will then show that the zero set $\V(B)$ is contained in the union of Segre varieties.  Finally, Proposition~\ref{prop:ZZ block} provides the inclusion into $Z_{n}$.

%
%
\subsection{Reduction to one variable} \label{sec:reduction}
 Let $n\geq 4$. Suppose $z = z_{I}X^{I}$ and $w = w_{I}X^{I}$ are points in $\widehat{\V(HD)}$ are such that $z_{I}=w_{I}$ for all $I\neq [1, \dots ,1]$.
Both points are zeros of every polynomial in $HD$, but the only coordinate in which they can differ is $[1, \dots ,1]$. Now consider the coordinates $z_{I}$ ( $=w_{I}$) as fixed constants for all $I \neq [1, \dots ,1]$, and for $f\in HD$ define $f_{z}$ by the substitution $f(X^{[0, \dots ,0]}, \dots ,X^{[1, \dots ,1]}) \mapsto f(z_{[0, \dots ,0]}, \dots ,z_{[0,1, \dots ,1]},X^{[1, \dots ,1]})=: f_{z}(X^{[1, \dots, 1]})$. Let $HD_{[1, \dots ,1]}(z) = \{f_{z}\mid f\in HD\}$ denote the resulting set of univariate polynomials.  Then $z_{[1, \dots ,1]}$ and $w_{[1, \dots ,1]}$ are two (possibly different) roots of each univariate polynomial $f_{z}\in HD_{[1, \dots ,1]}(z)$.

\begin{lemma}
If $f\in HD$, then the corresponding polynomial $f_{z} \in HD_{[1, \dots ,1]}(z)$ is either degree 0, 1, or 2 in $X^{[1, \dots ,1]}$.
\end{lemma}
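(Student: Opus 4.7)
The plan is to use the weight-space structure of $HD$ as an $\SL(2)^{\times n}$-module and argue monomial-by-monomial. Since the degree of $f$ in $X^{[1,\dots,1]}$ is the largest exponent of $X^{[1,\dots,1]}$ appearing in any monomial of $f$, the task reduces to showing that no monomial in which $X^{[1,\dots,1]}$ appears with exponent $\geq 3$ can occur in any element of $HD$.

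First, I would decompose $HD$ into its irreducible $\SL(2)^{\times n}$-summands,
\[
HD \;=\; \bigoplus_{\substack{K \subset \{1,\dots,n\} \\ |K|=3}} M_K, \qquad M_K := \bigotimes_{k \in K} S_{(2,2)} V_k^{*} \otimes \bigotimes_{k \notin K} S_{(4)} V_k^{*},
\]
so that it suffices to prove the degree bound inside each $M_K$. The decomposition matches the sum over permutations defining $HD$, and the dimensions $1^{3}\cdot 5^{n-3}$ add up to $\binom{n}{3}5^{n-3}$ as recorded earlier.

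Next I would set up weights using the conventions from Section~\ref{sec:WeightSpaces}: the coordinate functional $X^{I}$ has weight $(1 - 2i_1, \dots, 1 - 2i_n)$, so $X^{[1,\dots,1]}$ has weight $(-1,\dots,-1)$, and a degree-$4$ monomial $X^{I_1} X^{I_2} X^{I_3} X^{I_4}$ has weight $(4 - 2 s_1, \dots, 4 - 2 s_n)$, where $s_k$ counts the number of $j \in \{1,2,3,4\}$ with $i_k^{(j)} = 1$. On the other hand, $S_{(2,2)} \CC^2$ is one-dimensional with unique weight $0$ and $S_{(4)} \CC^2$ has weights $\{-4, -2, 0, 2, 4\}$, so the weights occurring in $M_K$ must be $0$ at every coordinate $k \in K$.

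The conclusion is then a one-line count. If $(X^{[1,\dots,1]})^m$ divides a monomial $X^{I_1} \cdots X^{I_4}$, then $s_k \geq m$ for every $k$, because $X^{[1,\dots,1]}$ has $i_k = 1$ in every coordinate; for $k \in K$ the constraint $4 - 2 s_k = 0$ forces $s_k = 2$, hence $m \leq 2$. Because the monomials of $S^4(V_1^{*} \otdot V_n^{*})$ form a weight basis, any $f \in M_K$ decomposes into weight components, each lying in the corresponding weight space of $M_K$; a monomial whose weight does not occur in $M_K$ therefore cannot appear with nonzero coefficient in $f$. Assembling the $M_K$'s proves the lemma. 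I do not anticipate a real obstacle here; the only bookkeeping point is that the forbidden weight at a position $k \in K$ is forced by $S_{(2,2)} V_k^{*}$ being one-dimensional (and $\SL(V_k)$-trivial), which pins $s_k$ to $2$ and caps the multiplicity of $X^{[1,\dots,1]}$ accordingly.
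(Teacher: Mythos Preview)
Your argument is correct and is essentially the same as the paper's: reduce to a single $\SL(2)^{\times n}$-summand $M_K$ and use that the weight in each $S_{(2,2)}$-slot must be zero, which forces $s_k=2$ and hence bounds the exponent of $X^{[1,\dots,1]}$ by~$2$. The paper phrases this as a contradiction (assuming a monomial $(X^{[1,\dots,1]})^3 X^{[i_1,\dots,i_n]}$ and checking its weight is nonzero in a $(2,2)$-slot) and uses the opposite sign convention on weights, but the content is identical.
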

\begin{proof}
It suffices to prove the statement for $f\in S_{(2,2)}V_{1}^{*}\otimes S_{(2,2)}V_{2}^{*}\otimes S_{(2,2)}V_{3}^{*}\otimes S_{(4)} V_{4}^{*}\otimes \dots  \otimes  S_{(4)}V_{n}^{*}$.  Suppose for contradiction that $f$ has a monomial of the form $(X^{[1,\dots,1]})^{3}X^{[i_{1},\dots,i_{n}]}$.  The possible weights of this monomial are $(2+2i_{1},\dots,2+2i_{n})$ with $i_{j}\in \{0,1\}$. However the weight of every polynomial in $ S_{(2,2)}V_{1}^{*}\otimes S_{(2,2)}V_{2}^{*}\otimes S_{(2,2)}V_{3}^{*}\otimes S_{(4)} V_{4}^{*}\otimes \dots  \otimes  S_{(4)}V_{n}^{*}$ is of the form $(0,0,0,w_{4},\dots,w_{n})$, where $w_{i}$ are even integers with $|w_{i}|\leq 4$ for $4\leq i\leq n$, a contradiction since obviously $0\neq 2+2i_{1}$ for any $i_{1}\in \{0,1\}$. Therefore the degree of $f$ is less than $3$ in $X^{[1,\dots,1]}$.
\end{proof}

Now we know that $w_{[1, \dots ,1]}$ and $z_{[1, \dots ,1]}$ are both common zeros of univariate polynomials, all with degree 2 or less.  The fact that $w_{[1, \dots ,1]}$ and $z_{[1, \dots ,1]}$ are both common zeros of more than one univariate polynomial comes from the fact that we have required $n\geq 4$ otherwise there is only one polynomial and what we are about to do would be trivial.

A quadratic (not identically zero) in one variable has at most two solutions, and a linear polynomial (not identically zero) has at most one solution.  The only way then for us to have $w\neq z$ and $[w],[z]\in \V(HD)$ is if \em all \em of the linear polynomials were identically zero and if \em all \em of the quadratics were scalar multiples of each other.

Therefore, we need to study the points $[z]\in \V(HD)$ for which $HD_{[1,\dots,1]}(z)$ has dimension 1 or less.
%
%
Define polynomials $a_{f}$, $b_{f}$, and $c_{f}$ (which necessarily do not depend on $X^{[1, \dots ,1]}$) for each $f_{z}\in HD_{[1, \dots ,1]}(z)$ by 
\[
f_{z} = a_{f}(z) \left(X^{[1, \dots ,1]}\right)^{2} + b_{f}(z) \left(X^{[1, \dots ,1]}\right) + c_{f}(z).
\] 
The requirement that $HD_{[1,\dots,1]}(z)$ have dimension 1 or less implies the weaker (but still sufficient) condition that $z$ be a root of the polynomials 
\[B':=span\{a_{f} b_{g} - a_{g}b_{f}\mid f,g\in HD\}.\]

The polynomials in $B'$ have the property that if $h(z) \neq 0$ for a nonzero $h\in B'$, \ie $[z]\not\in \V(B')$, then there is a non-trivial pair of polynomials in $HD_{[1,\dots,1]}(z)$ that are not scalar multiples of each other, and thus the zero set of $HD_{[1,\dots,1]}(z)$ is a single point. In this case we must have $w_{[1, \dots ,1]}=z_{[1, \dots ,1]}$. If, however $h(z) =0$ for all $h\in B'$ (\ie $z\in \V(B')$), then it is possible that the polynomials in $HD_{[1, \dots ,1]}(z)$ have $2$ common roots.

Notice that $B'$ is not $\G$-invariant. Let $B := span\{ \left(\G\right).B'\}$ denote the corresponding $\G$-module.
%
%
If  $g.[z]\notin \V(B')$, then by our remarks above, $g.[z] \in Z_{n}$, and in particular, $[z]\in Z_{n}$ (because $Z_{n}$ is a $G$-variety). 
The following lemma allows us to compare $G$-orbits of points and the zero sets of arbitrary  sets of polynomials (not necessarily $G$-modules).
\begin{lemma}\label{lem:orbitspan}
 Let $z\in \PP V$, let $G\subset GL(V)$ be a group, and let $M\subset \Sym(V^*)$ be a collection of polynomials ($M$ is not necessarily a $G$-module). Then
\[G.z \subset \V(M) \text{ if and only if } z \in \V(span\{ G.M \})\]
\end{lemma}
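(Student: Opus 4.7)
The plan is to unwind both sides via the standard definition of the $G$-action on polynomials, namely $(g.f)(z) = f(g^{-1}.z)$, and then observe that vanishing on a set of polynomials is equivalent to vanishing on the linear span of that set.

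First I would handle the forward direction. Assume $G.z \subset \V(M)$, and let $h \in \operatorname{span}\{G.M\}$ be arbitrary; so $h = \sum_i c_i (g_i.f_i)$ with $c_i \in \CC$, $g_i \in G$, and $f_i \in M$. Evaluating at $z$ gives
\[
h(z) = \sum_i c_i (g_i.f_i)(z) = \sum_i c_i f_i(g_i^{-1}.z).
\]
Since $g_i^{-1}.z \in G.z \subset \V(M)$ and $f_i \in M$, each summand vanishes. Hence $h(z)=0$, so $z \in \V(\operatorname{span}\{G.M\})$.

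For the reverse direction, assume $z \in \V(\operatorname{span}\{G.M\})$ and fix $g \in G$ and $f \in M$; I want to show $f(g.z)=0$. Using the same action formula,
\[
f(g.z) = (g^{-1}.f)(z).
\]
Since $g^{-1}.f \in G.M \subset \operatorname{span}\{G.M\}$ and $z$ is in the zero set of the latter, this value is $0$. Because $g$ and $f$ were arbitrary, every point of $G.z$ vanishes on every polynomial of $M$, i.e. $G.z \subset \V(M)$.

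There is no real obstacle here: the only subtlety is bookkeeping the convention for the $G$-action on polynomials (the inverse on $g$), and noting that a point vanishes on a set of polynomials if and only if it vanishes on its linear span, which reduces the ``$\operatorname{span}$'' wrapping on the right-hand side to an inessential notational device. The lemma is really a restatement of the tautology that $G$-orbits and $G$-modules generated by single elements are dual notions under the pairing $f(z)$.
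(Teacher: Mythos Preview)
Your proof is correct and follows essentially the same approach as the paper: both arguments unwind the condition via the identity $f(g.z)=(g^{-1}.f)(z)$ and observe that vanishing on a set of polynomials is equivalent to vanishing on its span. Your version is slightly more explicit in writing out a general element of $\operatorname{span}\{G.M\}$ as a finite linear combination, but the substance is identical.
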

\begin{proof}
 $G.z \subset \V(M)$ if and only if  $f(g.z)=0$ for all $g\in G$ and for all $f\in M$.  But from the definition of the $G$-action on the dual space, $f(g.z)= (g^{-1}.f)(z)$, so $f(g.z)=0$ for all $g\in G$ and  for every $f\in M$.  This happens if and only if $(g.f)(z) =0$ for all $g\in G$ and  for all $f\in M$, but, this is the condition that $z\in \V(span\{ G.M \})$.
\end{proof}
We apply Lemma~\ref{lem:orbitspan} to our setting; if $\left(\G\right).[z] \subset \V(B')$, then $[z]\in \V(B)$ (recall $B := span\{ \left(\G\right).B'\}$).
So, we need to look at the variety $\V(B)$. We conclude that our construction satisfies the property that if $[z]\in \V(HD)$ but $[z]\not\in \V(B)$, then $[z]\in Z_{n}$. 

We need to understand the types of points that can be in $\V(B)$ and the following proposition gives sufficient information about $\V(B)$.
\begin{prop}\label{prop:zerosB} Let $n\geq 4$ and let $B$ be the module of polynomials constructed above. Let $\mathcal{P}^{2}(\{1,\dots,n\})$ be the collection of all partitions of $\{1,\dots,n\}$ into mutually disjoint subsets of cardinality $2$ or less.  Then
 \[\V(B) \subset
\bigcup_{\{K_{1},\dots,K_{m}\}\in \mathcal{P}^{2}(\{1,\dots,n\})} 
 Seg\left( \PP V_{K_{1}} \tdt \PP V_{K_{m}} \right)  
\subset Z_{n}.\]
\end{prop}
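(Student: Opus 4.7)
My plan is to decompose $B$ into irreducible $\G$-submodules via weight analysis, then apply the Augmentation Lemma~\ref{lem:augment} together with an intersection argument in the spirit of Proposition~\ref{prop:segre ideal} to identify $\V(B)$ with the stated union of Segre products. A key initial observation is that each generator $a_f b_g - a_g b_f$ of $B'$ is a weight vector of degree $5$ in $S^5(V_1^*\otdot V_n^*)$: since $f = a_f (X^{[1,\dots,1]})^2 + b_f X^{[1,\dots,1]} + c_f$ with $f \in HD$ of some weight $\underline{w}(f)$, the coefficient $a_f$ carries weight $\underline{w}(f) - (2, \dots, 2)$ and $b_f$ carries weight $\underline{w}(f) - (1,\dots,1)$. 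Combined with Remark~\ref{rmk:transition}, this pins down the isotypic component in which each generator lives.

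Next I would pick convenient highest-weight representatives for $f$ and $g$ in distinct isotypic pieces of $HD$ and run the lowering-operator algorithm of Section~\ref{sec:lowering} on $a_f b_g - a_g b_f$ to produce a lowest-weight vector generating an irreducible $\G$-submodule of $B$; letting $(f,g)$ vary should exhaust the irreducible summands. I expect each summand to be an augmented module in the sense of Section~\ref{sec:augment}, augmented along one or more of the tensor factors $V_i$. The Augmentation Lemma~\ref{lem:augment} then realizes the zero set of each such summand as a union of projective spaces $\PP(L \otimes V_i)$ indexed by linear subspaces $L$ in the zero locus of the underlying module. Intersecting these zero sets over all summands, in analogy with the proof of Proposition~\ref{prop:segre ideal}, should force any $[z] \in \V(B)$ to decompose as a Segre tensor indexed by a partition $\{K_1, \dots, K_m\} \in \mathcal{P}^{2}(\{1,\dots,n\})$; the bound $|K_p| \leq 2$ is the combinatorial shadow of the fact that $B'$ singles out only the pair $(a_f, b_f)$ rather than the full three-term expansion $(a_f, b_f, c_f)$.

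Finally, iterating Proposition~\ref{prop:ZZ block} with the base identifications $Z_1 = \PP V_i$ and $Z_2 = \PP(V_i \otimes V_j)$ shows that each such Segre product is the image under $\varphi$ of a symmetric block-diagonal matrix with blocks of size at most two, hence a subvariety of $Z_n$. The main obstacle is the middle step: explicitly enumerating the irreducible summands of $B$ and then controlling the intersection of their zero sets to match exactly the partitions in $\mathcal{P}^{2}(\{1,\dots,n\})$. Here the multiplicity-one statements of Proposition~\ref{prop:multiplicity} and Lemma~\ref{lem:multiplicity:general} are essential, since they guarantee that each candidate module appears in $S^5(V_1^*\otdot V_n^*)$ with multiplicity one, so that the weight of a single lowest-weight vector unambiguously identifies the submodule.
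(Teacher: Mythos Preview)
Your overall toolkit (weight analysis of $a_fb_g-a_gb_f$, the lowering algorithm of Section~\ref{sec:lowering}, the Augmentation Lemma, and an intersection argument in the style of Proposition~\ref{prop:segre ideal}) is the same as the paper's. The difference is one of economy, and it is exactly where you yourself flag the ``main obstacle.'' The proposition only asserts an \emph{inclusion} $\V(B)\subset\cdots$, so you do not need to decompose $B$ completely and intersect the zero sets of all summands. It is enough to exhibit a \emph{single} irreducible submodule $\tilde B\subset B$ with $\V(\tilde B)$ equal to the union of Segre products; then $\V(B)\subset\V(\tilde B)$ is automatic. The paper does precisely this: running the lowering algorithm on $h=a_{k_1,k_2,k_3}b_{j_1,j_2,j_3}-a_{j_1,j_2,j_3}b_{k_1,k_2,k_3}$ (for each of the three possible overlap patterns of $\{k_1,k_2,k_3\}$ and $\{j_1,j_2,j_3\}$) always lands on a lowest-weight vector of weight $(3,3,3,5,\dots,5)$ up to permutation, identifying the single module $\tilde B=S_{(4,1)}S_{(4,1)}S_{(4,1)}S_{(5)}\dots S_{(5)}$.

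The second ingredient you are missing is how to pin down $\V(\tilde B)$. This is done by induction on $n$, with the augmentation/intersection step essentially as you propose; but the induction needs an anchor. The base case $n=3$ is handled not by further augmentation arguments but by the explicit $\SL(2)^{\times 3}$ orbit classification of $\PP(\CC^2\otimes\CC^2\otimes\CC^2)$: one checks on normal forms that $\V(S_{(4,1)}V_1^*\otimes S_{(4,1)}V_2^*\otimes S_{(4,1)}V_3^*)$ is exactly the union of the three Segre varieties $Seg(\PP(V_i\otimes V_j)\times\PP V_k)$. Your heuristic that ``$|K_p|\le 2$ is the combinatorial shadow of using only $(a_f,b_f)$'' is suggestive, but the actual reason the bound is $2$ is this base-case computation.

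Finally, your appeal to Proposition~\ref{prop:multiplicity} and Lemma~\ref{lem:multiplicity:general} for multiplicity one is misplaced: those results concern degree~$4$ modules and their $S^d$-augmentations, not arbitrary degree~$5$ pieces of $B$. Fortunately, once you abandon the plan of enumerating all summands, multiplicity one is no longer needed---the lowering algorithm produces a concrete lowest-weight vector, and the module it generates is a submodule of $B$ regardless of multiplicities. The inclusion into $Z_n$ via Proposition~\ref{prop:ZZ block} you handle correctly.
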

\begin{proof}
Proposition \ref{prop:zerosB} will be proved in several parts.
In Lemma \ref{lem:zerosB1} we will find the module $S_{(4,1)}S_{(4,1)}S_{(4,1)}S_{(5)} \dots  S_{(5)}$
as a submodule of $B$ using the algorithm in Section~\ref{sec:lowering}.
In Lemma \ref{lem:big segre ideal} we will identify the zero set of this new module. In particular, we will show that
\[\V(S_{(4,1)}S_{(4,1)}S_{(4,1)}S_{(5)} \dots  S_{(5)}) = \bigcup_{\{K_{1},\dots,K_{m}\}\in \mathcal{P}^{2}(\{1,\dots,n\})} 
 Seg\left( \PP V_{K_{1}} \tdt \PP V_{K_{m}} \right) 
.\]
We prove this statement by induction on $n$, where we prove the base case $n=3$ in Lemma \ref{lem:singular base} and the induction step in Lemma \ref{lem:one piece n}.
Finally, each $\PP V_{K_{i}}$ is either a copy of $\PP^{1}\cong Z_{1}$ or $\PP^{3}\cong Z_{2}$ so we can apply Proposition~\ref{prop:ZZ block} to verify the inclusion \[Seg\left( \PP V_{K_{1}} \tdt \PP V_{K_{m}} \right) \subset Z_{n}.\]
\end{proof}

\begin{lemma}\label{lem:zerosB1} Suppose $n\geq 4$ and let $B$ be constructed as above. Then  
\[S_{(4,1)}S_{(4,1)}S_{(4,1)}S_{(5)} \dots  S_{(5)} \subset B.\]
\end{lemma}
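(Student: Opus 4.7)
The module $HD'' := S_{(4,1)}S_{(4,1)}S_{(4,1)}S_{(5)}\cdots S_{(5)}$ is irreducible as a $G$-module by the $\mathfrak{S}_n$-orbit argument used in the proof of Proposition \ref{prop:multiplicity}, and I claim it occurs with multiplicity one in $S^{5}(V_{1}^{*}\otdot V_{n}^{*})$. By Lemma \ref{lem:multiplicity:general} the multiplicity reduces (using $\chi_{(5)}\equiv 1$) to
\[
\dim([(4,1)]^{\otimes 3})^{\mathfrak{S}_{5}}=\tfrac{1}{120}\bigl(1\cdot 4^{3}+10\cdot 2^{3}+20\cdot 1^{3}+20\cdot(-1)^{3}+24\cdot(-1)^{3}\bigr)=1,
\]
computed from the character of the standard representation of $\mathfrak{S}_5$. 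Since $B$ is a $G$-module by construction, the inclusion $HD''\subseteq B$ will follow from exhibiting any single nonzero polynomial in $B\cap HD''$; by Remark \ref{rmk:transition} and the multiplicity-one statement, it suffices to produce in $B$ a nonzero highest weight vector of degree $5$ and weight $(-3,-3,-3,-5,\dots,-5)$, the weight of a highest weight vector in a fixed $\SL(2)^{\times n}$-summand of $HD''$.

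\textbf{Producing a candidate in $B'$.} To build such a polynomial $h := a_{f}b_{g}-a_{g}b_{f}\in B'$, the first step is to choose two elements $f,g\in HD$ whose decompositions in the variable $X^{[1,\dots,1]}$ have non-trivial coefficients $a_\bullet, b_\bullet$. Since Cayley's hyperdeterminant itself is free of $X^{[1,\dots,1]}$, such $f$ and $g$ must be taken as non-trivial $G$-translates. A convenient first choice for $f$ is the image of the hyperdeterminant under the simultaneous involution $x_{k}^{0}\leftrightarrow x_{k}^{1}$ in every factor; this yields $a_{f}=(X^{[0,0,0,1,\dots,1]})^{2}$ directly from the $(X^{[0,0,0]})^{2}(X^{[1,1,1]})^{2}$ term of the hyperdeterminant. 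For $g$ I would pre-compose this involution with a transposition in $\mathfrak{S}_{n}\subset G$ that swaps one of the first three factors with one of the last $n-3$ (possible because $n\geq 4$), producing $(a_{g},b_{g})$ whose supports are combinatorially misaligned with $(a_{f},b_{f})$ and so ruling out accidental cancellation in $h$.

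\textbf{Extracting the target weight vector.} With a nonzero $h\in B$ in hand, I would then apply the lowering-operator algorithm of Section \ref{sec:lowering}: iterate the operators $\alpha_{i}\in\mathfrak{sl}_{2}^{\oplus n}$ in a fixed order to obtain a weight vector $T\in B$ (closure of $B$ under the $\mathfrak{g}$-action is automatic from $G$-invariance). A weight count on the monomials of $h$ bounds the weight the algorithm can reach; by the multiplicity-one analysis above, only $HD''$ contains a highest weight vector whose weight lies on the $\mathfrak{S}_{n}$-orbit of $(-3,-3,-3,-5,\dots,-5)$. Once $T$ is verified to be nonzero with this weight, Remark \ref{rmk:transition} identifies it as a highest weight vector of $HD''$, and irreducibility of $HD''$ yields $HD''=\mathrm{span}(G\cdot T)\subseteq B$.

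\textbf{Main obstacle.} The real work lies in the explicit combinatorial verification that $h$ is nonzero and that the lowering-operator algorithm applied to it does not collapse to zero before reaching the target weight. This reduces to a careful computation with the explicit formula for the $2\times 2\times 2$ hyperdeterminant and with the Leibniz action of the $\alpha_i$ on degree-five polynomials; the principal risk is cancellation among the many sign-bearing cross-terms of the hyperdeterminant, and the misalignment between the supports of $(a_{f},b_{f})$ and $(a_{g},b_{g})$ guaranteed by the choice of $g$ is precisely what prevents it.
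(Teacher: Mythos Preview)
Your approach is essentially the paper's: pick weight vectors $f,g$ in distinct $\SL(2)^{\times n}$-summands of $HD$ (your simultaneous involution $x^{0}\leftrightarrow x^{1}$ produces exactly the paper's lowest weight vector $f_{1,2,3}$, and your transposition gives another $f_{j_{1},j_{2},j_{3}}$), form $h=a_{f}b_{g}-a_{g}b_{f}$, run the algorithm of Section~\ref{sec:lowering}, and read off the module from the resulting weight---which the paper reports (via a Maple computation) as $(3,3,3,5,\dots,5)$. One terminological slip: that algorithm outputs a \emph{lowest} weight vector, so the target weight is $(3,3,3,5,\dots,5)$ rather than the highest weight $(-3,-3,-3,-5,\dots,-5)$ you name; your explicit multiplicity-one computation for $S_{(4,1)}^{\otimes 3}\otimes S_{(5)}^{\otimes(n-3)}$ is a useful addition that the paper leaves implicit when invoking Remark~\ref{rmk:transition}.
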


\begin{proof} 
Here we have a subset of polynomials in $B$ in an explicit form, and we would like to identify $\G$-modules in $B$ from this information.
To do this we use the ideas presented in Section~\ref{sec:WeightSpaces} and particularly  the algorithm presented in Section~\ref{sec:lowering}.  It suffices to work first with $\SL(2)^{\times n}$-modules and later consider the permutations.

Suppose $f_{k_{1},k_{2},k_{3}}\in S_{(2,2)}V_{k_{1}}^{*}\otimes S_{(2,2)}V_{k_{2}}^{*} \otimes S_{(2,2)}V_{k_{3}}^{*}\otimes S_{(4)}V_{k_{4}}^{*}\otdot S_{(4)}V_{k_{n}}^{*}$ is a lowest weight vector.  Define $a_{k_{1},k_{2},k_{3}}$, $b_{k_{1},k_{2},k_{3}}$, $c_{k_{1},k_{2},k_{3}}$ by the equation $f_{k_{1},k_{2},k_{3}} = a_{k_{1},k_{2},k_{3}}(X^{[1, \dots ,1]})^{2} + b_{k_{1},k_{2},k_{3}}(X^{[1, \dots ,1]}) + c_{k_{1},k_{2},k_{3}}$.

For this proof, we introduce new notation. If $k_{1},k_{2},k_{3}$ are fixed, let $X^{I_{p,q,r}}$ denote the coordinate vector with $k_{1}=p, k_{2}=q,k_{3}=r$ and $k_{s}=0$ for $s\geq 4$. 

Since $f_{k_{1},k_{2},k_{3}}$ is a hyperdeterminant of format $2 \times 2 \times 2$ we find 
\[a_{k_{1},k_{2},k_{3}}= (X^{I_{0,0,0}})^{2}\]
\[ b_{k_{1},k_{2},k_{3}}=  -2 X^{I_{0,0,0}} \left(  X^{I_{1,0,0}}X^{I_{0,1,1}}  + X^{I_{0,1,0}}X^{I_{1,0,1}} + X^{I_{0,1,0}}X^{I_{1,1,0}} \right)  +4 X^{I_{1,0,0}}X^{I_{0,1,0}}X^{I_{0,0,1}} .
\]
The weight of $a_{k_{1},k_{2},k_{3}}$ is (up to permutation) $(-2,-2,-2,2, \dots ,2)$, where the $-2$'s actually occur at $\{k_{1},k_{2},k_{3}\}$. The weight of $b_{k_{1},k_{2},k_{3}}$ is (up to permutation) 

$ (-1,-1,-1,3, \dots ,3) $, where the $-1$'s actually occur at $\{k_{1},k_{2},k_{3}\}$.
Now consider
\[ h_{k_{1},k_{2},k_{3},j_{1},j_{2},j_{3}}= a_{k_{1},k_{2},k_{3}}b_{j_{1},j_{2},j_{3}}-a_{j_{1},j_{2},j_{3}}b_{k_{1},k_{2},k_{3}} \in B.\]
We notice that $h_{k_{1},k_{2},k_{3},j_{1},j_{2},j_{3}}$ can not have $k_{1},k_{2},k_{3}$ and $j_{1},j_{2},j_{3}$ all equal (this is the zero polynomial).  So either two, one or zero pairs of $i$'s and $j$'s match in the indices $k_{1},k_{2},k_{3}$ and $j_{1},j_{2},j_{3}$. Therefore  $h_{k_{1},k_{2},k_{3},j_{1},j_{2},j_{3}}$ can have $3$ different (up to permutation) weights, depending on how $k_{1},k_{2},k_{3}$ and $j_{1},j_{2},j_{3}$ match up.  The three possible weights of $h_{k_{1},k_{2},k_{3},j_{1},j_{2},j_{3}}$ are (up to permutation): $(-3,-3,1,1,5, \dots ,5)$, $(-3, 1,1,1 ,1,5, \dots ,5)$, or $(1, 1, 1, 1, 1, 1,5, \dots ,5)$.

In each case, apply the algorithm in Section~\ref{sec:lowering} and lower $h_{k_{1},k_{2},k_{3},j_{1},j_{2},j_{3}}$ to a nonzero vector with the lowest possible weight.  We did this calculation in Maple. The output in each case is a vector of weight (up to permutation) $(3,3,3,5, \dots ,5)$.  Next we use Remark~\ref{rmk:transition} to identify the module with lowest weight $(3,3,3,5, \dots ,5)$ as 
\[S_{(4,1)}S_{(4,1)}S_{(4,1)}S_{(5)} \dots  S_{(5)},\] and this must be a submodule of $B$.
\end{proof}

\begin{lemma}\label{lem:singular base} As sets in $\PP(V_{1}\otimes V_{2} \otimes V_{3})$
\begin{multline*}
 \V\left(S_{(4,1)}V_{1}^{*} \otimes S_{(4,1)}V_{2}^{*} \otimes S_{(4,1)}V_{3}^{*}\right) \\
  =  Seg(\PP (V_{1} \otimes V_{2})\times \PP V_{3})  \cup Seg(\PP (V_{1} \otimes V_{3})\times \PP V_{2})
     \cup Seg(\PP (V_{1}) \times(V_{1} \otimes V_{2}) .    \end{multline*}
\end{lemma}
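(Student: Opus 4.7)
The plan is to prove the set equality by checking both inclusions, using throughout that both sides are invariant under $G := \SL(V_1)\times \SL(V_2)\times \SL(V_3)$ and that $M := S_{(4,1)} V_1^{*} \otimes S_{(4,1)} V_2^{*} \otimes S_{(4,1)} V_3^{*}$ is an irreducible $G$-module.

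For the inclusion $\supseteq$, the key point is the standard dichotomy: for any $G$-invariant subvariety $Y$, the intersection $\I(Y)\cap M$ is either all of $M$ or trivial.  So it suffices to produce a single nonzero $f \in M$ and check that $f$ vanishes at a single generic point of $Y$.  Because $M$ is symmetric under the $\mathfrak{S}_3$-action permuting the three factors (all three partitions are $(4,1)$), the problem reduces to treating only one of the three Segre varieties, say $Y = Seg(\PP(V_1\otimes V_2)\times \PP V_3)$, with generic point $T = (x_1^{0}\otimes x_2^{1}+x_1^{1}\otimes x_2^{0})\otimes x_3^{0}$.  A character computation of the type used in Lemma~\ref{lem:multiplicity:general} shows that $M$ occurs with multiplicity one in $S^{5}(V_1^{*}\otdot V_3^{*})$, so $M$ is uniquely determined by any highest weight polynomial of weight $(3,3,3)$, which I would construct explicitly via the lowering-operator algorithm of Section~\ref{sec:lowering} applied to an ansatz built from $2\times 2$ flattening minors.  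Evaluating this polynomial at $T$ completes this direction.

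For the inclusion $\subseteq$, I would invoke the classical classification of $G$-orbits on $\PP(V_1\otimes V_2\otimes V_3)\cong \PP^{7}$, organized by multilinear rank $(r_1,r_2,r_3)$: the possible patterns are $(1,1,1)$, the three with exactly one $r_i = 1$ (whose closures are precisely the three Segre varieties on the right-hand side), and $(2,2,2)$.  Thus any $[T]$ outside the three Segres has rank pattern $(2,2,2)$, realized by exactly two orbits: the open orbit represented by $T_1 = x_1^{0}\otimes x_2^{0}\otimes x_3^{0} + x_1^{1}\otimes x_2^{1}\otimes x_3^{1}$, on which Cayley's hyperdeterminant is nonzero, and the orbit represented by $T_2 = x_1^{1}\otimes x_2^{0}\otimes x_3^{0} + x_1^{0}\otimes x_2^{1}\otimes x_3^{0} + x_1^{0}\otimes x_2^{0}\otimes x_3^{1}$, on which the hyperdeterminant vanishes.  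For each, I would exhibit an explicit $f_i \in M$ with $f_i(T_i)\neq 0$; then $G$-invariance of each orbit and $G$-irreducibility of $M$ rule both orbits out of $\V(M)$, completing the inclusion.

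The main obstacle will be computational: extracting explicit polynomials from the $64$-dimensional module $M$ and evaluating them at $T_1$ and $T_2$.  The degree-$5$ highest weight polynomial has many terms once expanded in coordinates, and the nonvanishing checks are tedious by hand but perfectly mechanical, best carried out with the Maple or LiE Schur-module implementations already referenced elsewhere in the paper.  Conceptually, however, the argument is clean: the orbit classification dictates exactly which points need checking, and the $G$-irreducibility of $M$ does the remaining geometric bookkeeping.
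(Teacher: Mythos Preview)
Your approach is essentially the same as the paper's: both arguments use the finite orbit classification of $\SL(2)^{\times 3}$ on $\PP^{7}$, the $G$-irreducibility of $M$, and explicit evaluation of a weight vector at orbit representatives.  Two minor efficiencies in the paper worth noting: for $\supseteq$ one can avoid any computation by observing that a point of $Seg(\PP(V_1\otimes V_2)\times \PP V_3)$ lies in some $\PP(V_1\otimes V_2\otimes \widehat{c})$ and $S_{(4,1)}\widehat{c}^{*}=0$ since $\widehat{c}$ is one-dimensional; and for $\subseteq$ the nesting $\tau\subset\sigma=\PP^{7}$ means a single nonvanishing at your $T_2$ already excludes both remaining orbits, so the separate check at $T_1$ is unnecessary.
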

\begin{proof} 
 The space $V_{1}^* \otimes V_{2}^* \otimes V_{3}^*$ has seven of orbits under the action of $\SL(2)^{\times 3}$ \cite[Example 4.5 p. 478]{GKZ}.  This gives rise to a list of normal forms, which we record below together with the respective $\G$-orbit closures to which they belong:
\begin{itemize}
\item The trivial orbit, $\emptyset$.
 \item $Seg(\PP V_{1} \times\PP V_{2} \times\PP V_{3})$ : Normal form $[x] = [a \otimes b \otimes c]$. 
,
 \item $ \tau(Seg(\PP V_{1} \times\PP V_{2} \times\PP V_{3}))_{sing} =  \mathfrak{S}_{3}.Seg(\PP (V_{1} \otimes V_{2})\times \PP V_{3})$: Normal form (up to permutation) $[x] = [a \otimes b \otimes c + a' \otimes b' \otimes c ]$.  This union of 3 irreducible varieties is the singular set of the next orbit.
 \item $\tau(Seg(\PP V_{1} \times\PP V_{2} \times\PP V_{3}))$: Normal form $[x] = [a \otimes b \otimes c + a' \otimes b \otimes c + a \otimes b' \otimes c + a \otimes b \otimes c']$.

 \item $\sigma(Seg(\PP V_{1} \times\PP V_{2} \times\PP V_{3}))$:  Normal form $[x] = [a \otimes b \otimes c + a' \otimes b' \otimes c']$.
\end{itemize}

The orbit closures are nested:
\begin{multline*}
\emptyset \subset Seg(\PP V_{1} \times\PP V_{2} \times\PP V_{3})
 \subset
 \tau(Seg(\PP V_{1} \times\PP V_{2} \times\PP V_{3}))_{sing}
\\
 \subset
 \tau(Seg(\PP V_{1} \times\PP V_{2} \times\PP V_{3}))
 \subset
 \sigma(Seg(\PP V_{1} \times\PP V_{2} \times\PP V_{3})) = \PP^{7}
. \end{multline*}

The lowest weight vector for $S_{(4,1)}S_{(4,1)}S_{(4,1)}$  is 
\begin{align*} 
f_{(4,1),(4,1),(4,1)}=(X^{[1,1,1]})^{2} \big(X^{[0,0,0]}(X^{[1,1,1]})^{2}  + 2X^{[1,0,1]}X^{[0,1,1]}X^{[1,1,0]} \\
-  X^{[1,1,1]} ( X^{[0,1,1]}X^{[1,0,0]} + X^{[1,0,1]}X^{[0,1,0]}+X^{[1,1,0]}X^{[0,0,1]} ) \big)
.\end{align*}

We took a generic point $x\in \tau(Seg(\PP V_{1} \times\PP V_{2} \times\PP V_{3}))_{sing}$ and evaluated $f_{(4,1),(4,1),(4,1)}(x)=0$.  So therefore  $\tau(Seg(\PP V_{1} \times\PP V_{2} \times\PP V_{3}))_{sing} \subset \V(S_{(4,1)}S_{(4,1)}S_{(4,1)})$.  We could also conclude this without a calculation by noticing that any point of the form $ [a \otimes b \otimes c + a' \otimes b' \otimes c ]$ lives in $\PP(V_{1}\otimes V_{2}\otimes \widehat{c})$, where $\widehat{c}$ is the line through $c$. But every point in this space is a zero of $S_{(4,1)}S_{(4,1)}S_{(4,1)})$ because $S_{(4,1)}(\widehat{c})^{*}=0$.

Next, we show that the other two varieties are not in $\V(S_{(4,1)}S_{(4,1)}S_{(4,1)})$.  The varieties are nested, so consider the point $[x] = \big[X^{[1,1,1]} + X^{[0,1,1]} + X^{[1,0,1]} + X^{[1,1,0]}\big] \in \tau(Seg(\PP V_{1} \times\PP V_{2} \times\PP V_{3}))$.  But $f_{(4,1),(4,1),(4,1)}(x)  = 2 \neq 0$, so the other two varieties are not in $\V(S_{(4,1)}S_{(4,1)}S_{(4,1)})$.
Since we have considered all possible normal forms, we are done.\end{proof}

\begin{obs}
 All the linear spaces on $Seg(\PP V_{K_{1}} \tdt \PP V_{K_{m}})$ are (up to permutation) contained in one of the form $V_{K_{1}}\otimes \widehat{v_{K_{2}} } \otdot \widehat{v_{K_{m}}}$, where $\widehat{v_K}$ denotes the line through $ v_{k_{1}}\otdot v_{k_{s}}$ in $V_{K}$.
\end{obs}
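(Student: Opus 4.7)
Proof proposal. The plan is to reduce the claim to a classical structural fact about linear subspaces of the affine cone over a Segre variety, treating each $V_{K_i}$ as a single vector-space factor. Specifically, I will show: if $L \subset V_{K_1} \otimes \cdots \otimes V_{K_m}$ is a linear subspace every nonzero element of which is a decomposable tensor, then after a permutation of the factors there exist vectors $v_j \in V_{K_j}$ for $j \geq 2$ with $L \subset V_{K_1} \otimes \widehat{v_2} \otimes \cdots \otimes \widehat{v_m}$. The case $\dim L \leq 1$ is immediate, so I assume $L$ contains linearly independent decomposable tensors $a = a_1 \otimes \cdots \otimes a_m$ and $b = b_1 \otimes \cdots \otimes b_m$.

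The key step is the following flattening lemma: there is a unique index $i$ for which $a_i$ and $b_i$ are not proportional, while $a_j$ is proportional to $b_j$ for every $j \neq i$. To prove it, I use that $a+b \in L$ is again decomposable, so for every bipartition $\{1,\ldots,m\} = S \sqcup S^c$ the flattening $a_S \otimes a_{S^c} + b_S \otimes b_{S^c}$ has rank one in $V_S \otimes V_{S^c}$. A sum of two nonzero rank-one tensors in a bipartite product is itself rank one if and only if the two summands share a proportional factor, so for each bipartition either $a_S$ is proportional to $b_S$, or $a_{S^c}$ is proportional to $b_{S^c}$. Linear independence of $a,b$ prevents $a_j$ from being proportional to $b_j$ for every $j$, so I pick an $i$ with $a_i$ not proportional to $b_i$ and apply the dichotomy with $S=\{i\}$ to force $a_j$ proportional to $b_j$ for all $j \neq i$.

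After permuting factors so that $i = 1$ and setting $v_j := a_j$ for $j \geq 2$, a direct computation shows that every linear combination of $a$ and $b$ lies in $V_{K_1} \otimes \widehat{v_2} \otimes \cdots \otimes \widehat{v_m}$. To propagate to all of $L$, I pick any decomposable $c \in L$ linearly independent from $a$ and $b$ and apply the lemma to the pairs $(a,c)$ and $(b,c)$, obtaining exceptional indices $i'$ and $i''$. A short case check, using that $a_1$ is not proportional to $b_1$ while $a_j$ is proportional to $b_j$ for $j \geq 2$, rules out $i' \neq 1$: any such choice would either force $c$ proportional to $a$ (contradicting independence) or yield $c_1$ proportional to both $a_1$ and $b_1$ (contradicting $a_1$ not proportional to $b_1$). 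Hence $i' = 1$, and the lemma gives $c_j$ proportional to $v_j$ for all $j \geq 2$, placing $c$ in the required subspace. The main obstacle is the bookkeeping of this three-way consistency check; the underlying rank-one flattening lemma itself is short and classical.
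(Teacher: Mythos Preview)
The paper states this as an \emph{Observation} and gives no proof; it treats the classification of linear spaces on a Segre variety as a standard fact (the same statement is invoked without argument in the proof of Proposition~\ref{prop:segre ideal}). So there is nothing in the paper to compare your argument against.

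Your proof is correct and self-contained. The two-element ``flattening lemma'' is exactly the right tool: the rank-one condition on each bipartite flattening of $a+b$ forces a unique moving index, and your three-way consistency check for a third vector $c$ is sound. In the subcase analysis with $i' \neq 1$, the case $i''=1$ indeed forces $c_j \propto a_j$ for all $j$ (hence $c \propto a$), and the case $i'' \neq 1$ forces $c_1$ proportional to both $a_1$ and $b_1$, contradicting $a_1 \not\propto b_1$ since $c_1 \neq 0$. One small remark on the bookkeeping: to apply the two-element lemma to both pairs $(a,c)$ and $(b,c)$ you only need $c$ non-proportional to each of $a$ and $b$ individually, not linearly independent from $\operatorname{span}\{a,b\}$; vectors $c$ already in $\operatorname{span}\{a,b\}$ are handled because that span sits inside $V_{K_1}\otimes \widehat{v_2}\otimes\cdots\otimes \widehat{v_m}$ by construction. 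With that adjustment your argument covers every nonzero $c \in L$.
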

Let $\mathcal{P}_{p,q}(\{n_{1},\dots,n_{p+q}\})$ denote the set of partitions of $\{n_{1},\dots,n_{p+q}\}$ into two disjoint sets of cardinality $p$ and $q$.

Consider the $\G$-module $\tilde{B} = S_{(4,1)}S_{(4,1)}S_{(4,1)}S_{(5)}\dots S_{(5)}$ that has $n-3$ copies of $S_{(5)}$. We write in $\tilde{B}$ in full detail as
\[\tilde{B} = 
\bigoplus_{\{\{k_{1},k_{2},k_{3}\},\{k_{4},\dots,k_{n}\}\}\in\mathcal{P}_{3,n-3}(\{1,\dots,n\})} S_{(4,1)}V_{k_{1}}^{*} \otimes S_{(4,1)}V_{k_{2}}^{*} \otimes S_{(4,1)}V_{k_{3}}^{*} \otimes S_{(5)} V_{k_{4}}^{*}\otdot S_{(5)} V_{k_{n}}^{*},
.\] 
Let $\tilde{B}_{k}$ denote the $\SL(2)^{n-1}\ltimes \mathfrak{S}_{n-1}$ module
\[\tilde{B}_{k} = 
\bigoplus_{\{\{k_{1},k_{2},k_{3}\},\{k_{4},\dots,k_{n-1}\}\}\in\mathcal{P}_{3,n-4}(\{1,\dots,n\}\setminus\{k\})}
S_{(4,1)}V_{k_{1}}^{*} \otimes S_{(4,1)}V_{k_{2}}^{*} \otimes S_{(4,1)}V_{k_{3}}^{*} \otimes S_{(5)} V_{k_{4}}^{*}\otdot S_{(5)} V_{k_{n-1}}^{*}
.\]
Notice that $\tilde{B} \sum_{i=1}^{n} \tilde{B}_{i} \otimes S_{(5)}V_{i}^{*}$. In other words the $\G$-module $\tilde{B}$ is constructed as the non-redundant sum over permutations of augmented $\SL(2)^{\times n-1}$-modules.

We want to understand the zero set of this module $\tilde{B}$, and we do this in the next two lemmas by mimicking what we did for Proposition~\ref{prop:segre ideal}.  We also point out that while notationally more complicated, the resulting Lemma~\ref{lem:big segre ideal} is essentially the same idea as Proposition~\ref{prop:segre ideal}.

\begin{lemma}\label{lem:one piece n}  Suppose $n\geq 4$ and let $\tilde{B}$ and $\tilde{B}_{k}$ be as above. If
\[
\V\left(\tilde{B}_{k}\right)  = 
\bigcup_{\{K_{1},\dots,K_{m}\}\in \mathcal{P}^{2}(\{1,\dots,n\}\setminus\{k\}) } 
Seg\left(\PP V_{K_{1}} \times \PP V_{K_{2}} \tdt \PP V_{K_{m}} \right) 
,\]  
then
 \begin{equation}\label{eq:one piece n} \V\left(\tilde{B}_{k} \otimes S_{(5)}V_k^{*} \right)  
= \bigcup_{\{K_{1},\dots,K_{m}\}\in \mathcal{P}^{2}(\{1,\dots,n\}\setminus\{k\}) } 
 Seg\left(\PP V_{K_{1}\cup\{k\}} \times \PP V_{K_{2}} \tdt \PP V_{K_{m}} \right) 
.\end{equation}
\end{lemma}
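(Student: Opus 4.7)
The strategy is to apply the Augmentation Lemma~\ref{lem:augment} with
\[
W = V_{1}\otdot V_{k-1}\otimes V_{k+1}\otdot V_{n},\qquad V=V_{k},\qquad X = \V(\tilde{B}_{k})\subset\PP W.
\]
Since $\dim V_{k}=2\geq 2$, the Augmentation Lemma applies directly, yielding
\[
\V\bigl(\tilde{B}_{k}\otimes S_{(5)}V_{k}^{*}\bigr) \;=\; \bigcup_{L\subset\V(\tilde{B}_{k})}\PP(L\otimes V_{k}),
\]
where $L$ ranges over linear subspaces of $\V(\tilde{B}_{k})$ and the $Seg(\V(\tilde{B}_{k})\times\PP V_{k})$ piece is absorbed by the one-dimensional $L$'s.

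Next I would invoke the hypothesis of the lemma to expand $\V(\tilde{B}_{k})$ as a finite union of Segre products indexed by $\mathcal{P}^{2}(\{1,\dots,n\}\setminus\{k\})$.  A linear subspace $L$ of a finite union of closed irreducible varieties must, by irreducibility, be contained in a single component, so $L\subset Seg(\PP V_{K_{1}}\tdt\PP V_{K_{m}})$ for some $\{K_{1},\dots,K_{m}\}\in\mathcal{P}^{2}(\{1,\dots,n\}\setminus\{k\})$.  The Observation stated above the lemma then implies that, up to permutation of the factors, there is a distinguished index $j$ and rank-one tensors $v_{K_{i}}\in V_{K_{i}}$ for $i\neq j$ such that
\[
L \subset V_{K_{j}}\otimes \bigotimes_{i\neq j}\widehat{v_{K_{i}}}.
\]

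Using the canonical identification $V_{K_{j}}\otimes V_{k}\simeq V_{K_{j}\cup\{k\}}$, I would conclude
\[
\PP(L\otimes V_{k}) \subset \PP\Bigl(V_{K_{j}\cup\{k\}}\otimes\bigotimes_{i\neq j}\widehat{v_{K_{i}}}\Bigr) \subset Seg\Bigl(\PP V_{K_{j}\cup\{k\}}\times \prod_{i\neq j}\PP V_{K_{i}}\Bigr),
\]
which sits inside one of the Segre products on the right-hand side of \eqref{eq:one piece n} (with $k$ absorbed into the block $K_{j}$).  This proves the $\subseteq$ direction.  For the reverse inclusion, a general point of $Seg(\PP V_{K_{j}\cup\{k\}}\times\prod_{i\neq j}\PP V_{K_{i}})$ can be written as $w\otimes\bigotimes_{i\neq j}v_{K_{i}}$ with $w\in V_{K_{j}\cup\{k\}}$, and expanding $w = \sum_{\ell}u_{\ell}\otimes u'_{\ell}$ with $u_{\ell}\in V_{K_{j}}$, $u'_{\ell}\in V_{k}$ exhibits the point as an element of $L\otimes V_{k}$ for the specific choice $L = V_{K_{j}}\otimes \bigotimes_{i\neq j}\widehat{v_{K_{i}}}$.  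Since this $L$ is contained in $Seg(\PP V_{K_{1}}\tdt\PP V_{K_{m}})\subset\V(\tilde{B}_{k})$, the point lies in the left-hand side.

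The hard part here is not any single deep step but rather careful bookkeeping: one must verify that as $\{K_{1},\dots,K_{m}\}$ ranges over $\mathcal{P}^{2}(\{1,\dots,n\}\setminus\{k\})$ and the distinguished block $j$ varies independently, the collection of Segre products $Seg(\PP V_{K_{j}\cup\{k\}}\times\prod_{i\neq j}\PP V_{K_{i}})$ obtained this way exhausts the right-hand side of \eqref{eq:one piece n}.  This is essentially a rewriting exercise once one remembers that the partitions are unordered, but it is the step most vulnerable to notational slippage; the same pattern already appears in the proof of Proposition~\ref{prop:segre ideal}, which is exactly the model the present argument is meant to imitate.
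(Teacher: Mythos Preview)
Your proposal is correct and follows essentially the same approach as the paper: apply the Augmentation Lemma~\ref{lem:augment}, then use the Observation on linear spaces in Segre products to identify $\bigcup_{L}\PP(L\otimes V_{k})$ with the right-hand side of \eqref{eq:one piece n}. You are more explicit than the paper in two places---the irreducibility argument forcing $L$ into a single Segre component, and the reverse inclusion---but these are exactly the steps the paper compresses into ``there is only one type of linear space to consider'' and ``every point on the right hand side is on one of these linear spaces.''
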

\begin{proof}
Apply the Augmentation Lemma~\ref{lem:augment} to the left hand side of \eqref{eq:one piece n}.  It remains to check that 
\[
\bigcup_{L\subset \V(\tilde{B}_{k})}\PP(L\otimes V_{k})=
\bigcup_{\{K_{1},\dots,K_{m}\}\in \mathcal{P}^{2}(\{1,\dots,n\}\setminus\{k\}) } 
Seg\left(\PP V_{K_{1}\cup\{k\}} \times \PP V_{K_{2}} \tdt \PP V_{K_{m}} \right) 
,\]
where $L\subset \V(\tilde{B}_{k})$ are linear spaces.
Because of symmetry and our hypothesis, there is only one type of linear space to consider, $V_{I_{1}} \otimes \widehat{v_{I_{2}}} \otdot \widehat{v_{I_{m}}} \otimes V_{k} = V_{I_{1}\cup \{k\}}\otimes \widehat{v_{I_{2}}} \otdot \widehat{v_{I_{m}}} $.  It is clear that  each of these linear spaces is on one of the Segre varieties on the right hand side of \eqref{eq:one piece n}, and moreover every point on the right hand side of \eqref{eq:one piece n} is on one of these linear spaces.
\end{proof}

\begin{lemma}\label{lem:big segre ideal}  Suppose $n\geq 4$ and let $\mathcal{P}^{2}(\{n_{1},\dots,n_{n}\})$ denote the collection of all partitions of $\{n_{1},\dots,n_{n}\}$ into mutually disjoint subsets of cardinality $2$ or less. Then
\[ \V\left( S_{(4,1)}S_{(4,1)}S_{(4,1)}S_{(5)}\dots S_{(5)} \right) = 
\bigcup_{\{K_{1},\dots,K_{m}\}\in \mathcal{P}^{2}(\{1,\dots,n\}) } 
Seg\left( \PP V_{K_{1}} \tdt \PP V_{K_{m}} \right) .\]
\end{lemma}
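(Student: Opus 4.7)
The plan is to proceed by induction on $n$. The base case $n=3$ is exactly Lemma~\ref{lem:singular base}, since for $n=3$ the module has no $S_{(5)}$ factors and reduces to $S_{(4,1)}S_{(4,1)}S_{(4,1)}$; the three Segre varieties appearing in Lemma~\ref{lem:singular base} are precisely those indexed by the partitions $\{\{i,j\},\{k\}\}\in\mathcal{P}^2(\{1,2,3\})$, while the remaining partition $\{\{1\},\{2\},\{3\}\}$ is subsumed by any of these three.

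For the induction step ($n\geq 4$) I would exploit the decomposition $\tilde{B}=\sum_{k=1}^{n}\tilde{B}_k\otimes S_{(5)}V_k^{*}$, which gives $\V(\tilde{B})=\bigcap_{k=1}^{n}\V(\tilde{B}_k\otimes S_{(5)}V_k^{*})$. Applying the induction hypothesis to each $\tilde{B}_k$ (an analogous module on the $n-1$ factors indexed by $\{1,\dots,n\}\setminus\{k\}$) supplies exactly the hypothesis of Lemma~\ref{lem:one piece n}, which then yields
\[
\V(\tilde{B}_k\otimes S_{(5)}V_k^{*})=\bigcup_{\{K_1,\dots,K_m\}\in\mathcal{P}^2(\{1,\dots,n\}\setminus\{k\})} Seg\bigl(\PP V_{K_1\cup\{k\}}\times\PP V_{K_2}\tdt\PP V_{K_m}\bigr).
\]
The inclusion ``$\supseteq$'' of Lemma~\ref{lem:big segre ideal} is then checked one $k$ at a time: given $[z]\in Seg(\PP V_{J_1}\tdt\PP V_{J_l})$ with $\{J_1,\dots,J_l\}\in\mathcal{P}^2(\{1,\dots,n\})$, removing $k$ from the unique part $J_{i(k)}$ containing it (and dropping it if $J_{i(k)}=\{k\}$) produces a $\mathcal{P}^2$-partition of $\{1,\dots,n\}\setminus\{k\}$ exhibiting $[z]$ in the corresponding summand above.

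The hard part will be the reverse inclusion ``$\subseteq$'', which requires extracting a single global $\mathcal{P}^2$-partition from the local decompositions produced for each $k$. My tool is the structural fact that for any $[z]\in\PP(V_1\otdot V_n)$, the collection of partitions $\tau$ with $[z]\in Seg(\PP V_{\tau_1}\tdt \PP V_{\tau_r})$ is closed under common refinement and therefore has a unique finest element $\pi^{*}(z)$; this follows from the elementary observation that rank one of a tensor in both the $(S,S^c)$ and $(T,T^c)$ flattenings forces rank one in the finer flattening indexed by $\{S\cap T,\,S\cap T^c,\,S^c\cap T,\,S^c\cap T^c\}$. Now suppose $[z]\in\bigcap_k\V(\tilde{B}_k\otimes S_{(5)}V_k^{*})$; for each $k$ there is a decomposition $[z]\in Seg(\PP V_{A^{(k)}}\times\PP V_{K_2^{(k)}}\tdt\PP V_{K_m^{(k)}})$ with $k\in A^{(k)}$, $|A^{(k)}|\in\{2,3\}$, and $|K_i^{(k)}|\leq 2$ for $i\geq 2$. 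Since $\pi^{*}(z)$ refines this partition, for any part $P$ of $\pi^{*}(z)$ and any $k\in P$ one has $P\subseteq A^{(k)}$, whence $|P|\leq 3$. Suppose for contradiction that some part $P$ has $|P|=3$; since $|P|<n$, one may pick $k\in\{1,\dots,n\}\setminus P$, and then $P\subseteq A^{(k)}$ is impossible (it would force $|A^{(k)}|\geq |P|+1=4$), so $P$ must lie inside some $K_i^{(k)}$, contradicting $|K_i^{(k)}|\leq 2$. Hence every part of $\pi^{*}(z)$ has size at most $2$, so $\pi^{*}(z)\in\mathcal{P}^2(\{1,\dots,n\})$ and $[z]\in Seg(\PP V_{\pi_1^{*}}\tdt\PP V_{\pi_r^{*}})$. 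Finally, the inclusion of the resulting union of Segres into $Z_n$ follows by iterated application of Proposition~\ref{prop:ZZ block}, since each factor $\PP V_{K_i}$ is either $\PP^1\simeq Z_1$ or $\PP^3\simeq Z_2$.
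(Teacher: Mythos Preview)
Your argument is correct and follows the same overall route as the paper: induction on $n$, with Lemma~\ref{lem:singular base} as base case and Lemma~\ref{lem:one piece n} supplying the induction step, so that the crux is computing the intersection $\bigcap_{k}\V(\tilde{B}_{k}\otimes S_{(5)}V_{k}^{*})$.

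The only substantive difference is in how that intersection is analyzed. The paper asserts that it suffices to verify the pairwise identity
\[
Seg\bigl(\PP V_{K_{1}\cup\{k\}}\times\PP V_{K_{2}}\times\cdots\bigr)\cap Seg\bigl(\PP V_{K_{1}}\times\PP V_{K_{2}\cup\{k\}}\times\cdots\bigr)=Seg\bigl(\PP V_{K_{1}}\times\PP V_{K_{2}}\times\PP V_{k}\times\cdots\bigr),
\]
which it then reduces to the three-factor fact $Seg(\PP(V_{1}\otimes V_{2})\times\PP V_{3})\cap Seg(\PP V_{1}\times\PP(V_{2}\otimes V_{3}))=Seg(\PP V_{1}\times\PP V_{2}\times\PP V_{3})$. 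You instead package this same flattening observation as ``the set of partitions $\tau$ with $[z]\in Seg(\PP V_{\tau_{1}}\times\cdots\times\PP V_{\tau_{r}})$ is closed under common refinement,'' pass to the finest such partition $\pi^{*}(z)$, and bound its block sizes by a short counting argument. Your formulation has the advantage of making explicit why the pairwise check is enough (the paper leaves that reduction implicit), at the cost of introducing the auxiliary object $\pi^{*}(z)$. The underlying content---rank one in two bipartite flattenings forces rank one in their common refinement---is identical in both.

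One minor remark: the final sentence about inclusion into $Z_{n}$ via Proposition~\ref{prop:ZZ block} is not part of the present lemma; the paper places that step in the proof of Proposition~\ref{prop:zerosB}.
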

%
\begin{proof}
Proof by induction. The base case is Lemma~\ref{lem:singular base}. For the induction step, 
 use Lemma~\ref{lem:one piece n}. We need to show that 
\begin{align*}
\bigcap_{k=1}^{n}  \left(
\bigcup_{\{K_{1},\dots,K_{m}\}\in \mathcal{P}^{2}(\{1,\dots,n\}\setminus\{k\}) } 
 Seg\left(\PP V_{K_{1}\cup\{k\}} \times \PP V_{K_{2}} \tdt \PP V_{K_{m}} \right) 
\right)\\ =
\bigcup_{\{K_{1},\dots,K_{m}\}\in \mathcal{P}^{2}(\{1,\dots,n\}) }
Seg\left( \PP V_{K_{1}} \tdt \PP V_{K_{m}} \right)
.\end{align*}
It suffices to check that
\begin{align*} Seg\left(\PP V_{K_{1}\cup\{k\}} \times \PP V_{K_{2}} \times  \PP V_{K_{3}} \tdt \PP V_{K_{m}} \right) \\
\cap Seg\left(\PP V_{K_{1}} \times \PP V_{K_{2}\cup\{k\}} \times  \PP V_{K_{3}} \tdt \PP V_{K_{m}} \right) \\
=Seg\left(\PP V_{K_{1}} \times \PP V_{K_{2}} \times \PP V_{k} \times  \PP V_{K_{3}} \tdt \PP V_{K_{m}} \right).\end{align*}
This is equivalent to checking that for any vector spaces $V_{1},V_{2},V_{3}$ that 
\begin{align*} Seg\left(\PP (V_{1}\otimes V_{2}) \times \PP V_{3}\right) 
\cap Seg\left( \PP V_{1}\times \PP (V_{2}\otimes V_{3})  \right)
=Seg\left( \PP V_{1}\times \PP V_{2}\times \PP V_{3})  \right).\end{align*}
In this case, let $[T] \in Seg\left(\PP (V_{1}\otimes V_{2}) \times \PP V_{3}\right) 
\cap Seg\left( \PP V_{1}\times \PP (V_{2}\otimes V_{3})  \right)$.  Then, viewed as a map $T:(V_{1}\otimes V_{2})^{*}\rightarrow V_{3}$, the image of $T$ must be one dimensional, thus $[T]\in \PP(V_{1}\otimes V_{2}\otimes V_{3}')$ where $V_{3}'\subset V_{3}$ is a one dimensional subspace.  
By the same argument using the other Segre variety in the intersection, $[T]\in\PP(V_{1}\otimes V_{2}'\otimes V_{3})$, where $V_{2}'\subset V_{2}$ is a one dimensional subspace.  So $T\in \PP (V_{1}\otimes V_{2}' \otimes V_{3}')$, but this is a linear space on $Seg\left( \PP V_{1}\times \PP V_{2}\times \PP V_{3})  \right)$, so we are done. 
\end{proof}
We conclude this section by pointing out that we have established all of the ingredients for the proof of Lemma~\ref{lem:almost}.


\section{Proof of Theorem~\ref{thm:Main Theorem}} \label{sec:theorem}
The outline of the proof is the following. Proposition~\ref{prop:one way} says that $Z_{n}\subseteq \V(HD)$.  
To show the opposite inclusion, we work by induction. In the cases of $n=3,4$, the (stronger) ideal-theoretic version of Theorem~\ref{thm:Main Theorem} was proved with the aid of a computer in \cite{HSt}. Since the theorem is already proved for the cases $n=3,4$ we will assume $n\geq 5$.
The induction hypothesis is that $\V(HD_{i}) \simeq Z_{(n-1)}$. We need to show that given a point $[z] \in \V(HD)$, that $[z]\in Z_{n}$, \ie that there exists a matrix $A$ so that $\varphi([A,t]) = [z]$. The key tools we use in this proof are Proposition~\ref{prop:no almost} and Lemma~\ref{lem:characterization}.

We will work on a preferred open set $U_{0}= \{ [z] = [z_{I}X^{I}]\in\PP( V_{1}\otdot V_{n})\mid z_{[0, \dots ,0]} \neq 0 \}$.  Choosing to work on this open set is no loss of generality because of the following
\begin{lemma}
Let $U_{0}= \{ [z] = [z_{I}X^{I}]\in\PP( V_{1}\otdot V_{n})\mid z_{[0, \dots ,0]} \neq 0 \}$. Then $\V(HD) \cap U_{0} \subset Z_{n}$ implies that $\V(HD) \subset Z_{n}$.
\end{lemma}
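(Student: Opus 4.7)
The plan is to leverage the $G$-invariance of both sides: $\V(HD)$ is $G$-invariant because $HD$ is a $\G$-module (the standard ``$X$ is a $G$-variety iff $\I(X)$ is a $G$-module'' fact recalled in Section~3.3), and $Z_n$ is $G$-invariant by Theorem~\ref{thm:symmetry}. Given $[z] \in \V(HD)$, I would produce $g \in G$ with $g.[z] \in U_0$; then $g.[z] \in \V(HD) \cap U_0 \subset Z_n$ by hypothesis, and $[z] = g^{-1}.(g.[z]) \in Z_n$ by $G$-invariance of $Z_n$. So the whole proof reduces to the claim that the $G$-orbit of any nonzero point $[z]$ meets $U_0$.

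To see this, pick any multi-index $I = [i_1, \dots, i_n]$ with $z_I \neq 0$ (such $I$ exists because $[z]$ is a point of projective space and so represents a nonzero tensor). For each coordinate $k$ with $i_k = 1$, act on the $k$-th factor $V_k$ by the element
\[
s = \begin{pmatrix} 0 & 1 \\ -1 & 0 \end{pmatrix} \in \SL(2),
\]
which sends $x_k^0 \mapsto -x_k^1$ and $x_k^1 \mapsto x_k^0$; for coordinates with $i_k = 0$ do nothing. The resulting element $g \in \SL(2)^{\times n} \subset \G$ permutes the standard basis tensors of $(\CC^2)^{\otimes n}$ up to signs, and in particular carries $X^I$ to $\pm X^{[0,\dots,0]}$. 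Consequently the $[0,\dots,0]$-coordinate of $g.z$ equals $\pm z_I \neq 0$, so $g.[z] \in U_0$ as required.

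There is no serious obstacle here; the only point worth checking is that the swap matrix used lies in $\SL(2)$ rather than merely $\GL(2)$, which is immediate from $\det(s) = 1$. This confirms that restricting attention to the affine chart $U_0$ in the subsequent arguments loses no generality.
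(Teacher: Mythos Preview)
Your argument is correct and follows exactly the same approach as the paper: both $Z_n$ and $\V(HD)$ are $\G$-invariant, and the $\G$-orbit of $U_0$ is all of $\PP(V_1\otdot V_n)$. The paper's proof simply asserts the latter fact in one line, whereas you supply the explicit $\SL(2)^{\times n}$-element that moves a given $[z]$ into $U_0$; this is a welcome fleshing-out of the paper's terse claim, not a different route.
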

\begin{proof}
The result follows from the facts that $Z_{n}$ and $\V(HD)$ are $\G$-invariant, and $\left(\G\right).U_{0} = \PP(V_{1}\otdot V_{n})$. 
\end{proof}
Moreover, it suffices to work on the following section of the cone over projective space, $\{z=z_{I}Z^{I}\in V_{1}\otdot V_{n} \mid z_{[0,\dots,0] }=1\}$, because afterwards we can rescale everything to get the result on the whole open set $U_{0}$.  

Suppose we take a point in the zero set (as described by Lemma~\ref{lem:characterization})
\[[z] \in \V(HD)= \bigcap_{i=1}^{n} \bigcup_{L^i\subset \V(HD_{i})} \PP (L^i \otimes  V_{i}).\] 
Since $[z]$ is fixed, we can also fix a single $L^{i}$ for each $i$ so that $[z] \in  \bigcap_{i=1}^{n}\PP (L^i \otimes  V_{i})$. 
Work in our preferred section of the cone over projective space and write $n$ different expressions for the point $z$ (one for each $i$):
\[
z= z_{I} X^{I}  = \eta^i \otimes x_{i}^0 + \nu^i \otimes x_{i}^{1} ,
\]
where $[\eta^i],[\nu^i] \in L^i \subset \V(HD_{i})$. (These expressions are possible because each $V_{i}$ is $2$ dimensional.)  Choosing $z_{[0,\dots,0]} =1$ also implies that $\eta^{i}_{[0,\dots,0]}=1$.  The induction hypothesis says that  $Z_{(n-1)}\simeq\V(HD_{i})$ for $1\leq i\leq n$. So each $\eta^{i}$ satisfies $\varphi([A^{(i)},1]) = \eta^{i}$ for a symmetric matrix $A^{(i)}\in S^{2}\CC^{n-1}$.
For each $0\leq j\leq n$ denote by $\A^{j}$ the following subset of matrices
\[
\A^{j} = \{A \in S^{2}\CC^{n} \mid \Delta_{I}(A) = z_{I} \text{ for all } I = [i_{1},\dots,i_{n}] \text{ with } i_{j}= 0 \}
.\]
Each matrix  $A\in \A^{j}$ has the property that the principal submatrix of $A$ formed by deleting the $j^{th}$ row and column maps to $\eta^{j}$ under the principal minor map.  Thus each $A\in \A^{j}$ is a candidate matrix that might satisfy $\varphi([A,1])=[z]$, however we don't know if such a matrix will have a submatrix that maps to the other $\eta^{i}$'s. We claim that there is at least one matrix that satisfies all of these conditions.

\begin{lemma}\label{lem:nonempty} $\cap_{i=1}^{n}\A^{i}$ is non empty.
\end{lemma}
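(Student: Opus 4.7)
The plan is to construct $A\in\bigcap_{i=1}^{n}\A^{i}$ from the matrices $A^{(i)}\in S^{2}\CC^{n-1}$ provided by the induction hypothesis. The $1\times 1$ and $2\times 2$ principal minors of $z$ dictate the diagonal and the magnitudes of the off-diagonal entries of any candidate matrix, so the task reduces to choosing a compatible sign for each off-diagonal position.

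First I would use diagonal sign conjugation (which preserves principal minors) to align the $A^{(i)}$'s. Fix $A^{(n)}$ and, for each $i<n$, replace $A^{(i)}$ by $D^{(i)}A^{(i)}D^{(i)}$ for a suitable diagonal $\pm 1$ matrix $D^{(i)}$, so that the $(n-2)\times(n-2)$ submatrix of $A^{(i)}$ on indices $\{1,\dots,n-1\}\setminus\{i\}$ matches the corresponding submatrix of $A^{(n)}$ entry-by-entry. Such a $D^{(i)}$ exists because the two submatrices have the same principal minors, and so their off-diagonal signs must differ by a pattern of the form $\epsilon_{kl}=d_{k}d_{l}$, the triangle constraints being forced by the $3\times 3$ minors.

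The remaining freedom in $D^{(i)}$ after this alignment is a single sign at position $n$, which globally flips the entries $a^{(i)}_{kn}$ for $k\notin\{i,n\}$. The crux is to exploit this one-parameter-per-$A^{(i)}$ freedom to make the entries $a^{(i)}_{kn}$ agree across different values of $i$. Define discrepancies $\delta_{i,j,k}=\operatorname{sign}(a^{(i)}_{kn}/a^{(j)}_{kn})$ for $k\notin\{i,j,n\}$. The $3\times 3$ minor $z_{\{k,l,n\}}$ fixes the sign product $\operatorname{sign}(a_{kl}a_{ln}a_{kn})=:\tau_{kln}$, and applying this identity to both $A^{(i)}$ and $A^{(j)}$ (noting that the alignment has already equalized $\operatorname{sign}(a^{(i)}_{kl})$ with $\operatorname{sign}(a^{(j)}_{kl})$) gives $\delta_{i,j,k}\delta_{i,j,l}=1$; hence $\delta_{i,j,k}$ is independent of $k$, and I write it as $\delta_{i,j}$. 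A standard cocycle calculation yields $\delta_{i,j}\delta_{j,k}=\delta_{i,k}$, so $\delta_{i,j}=c_{i}/c_{j}$ for suitable signs $c_{i}$, and flipping the residual sign of $A^{(i)}$ by $c_{i}$ makes the $a^{(i)}_{kn}$ uniformly agree.

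Once the $A^{(i)}$ are mutually compatible I would assemble $A$ by reading off the common diagonal and off-diagonal entries, and verify $A\in\bigcap_{i=1}^{n}\A^{i}$ by showing $\Delta_{I}(A)=z_{I}$ for every $I\neq[1,\dots,1]$: for such $I$ one can pick $i\notin I$, and by construction $A|_{I}$ coincides entry-by-entry with the corresponding submatrix of $A^{(i)}$, so $\Delta_{I}(A)=z_{I}$ by the induction hypothesis. I expect the main obstacle to be the $k$-independence of $\delta_{i,j,k}$ in the cocycle step; this is the one place where $z\in\V(HD)$ is used beyond the induction itself, and the property is forced by the $3\times 3$ hyperdeterminantal relations sitting inside $HD$.
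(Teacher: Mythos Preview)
Your approach is genuinely different from the paper's and largely sound, but there are real gaps in the justification and one conceptual confusion.

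\textbf{Comparison.} The paper does not build $A$ by patching signs. It argues (sketchily) by an inner reduction: assuming $A\in\bigcap_{i\geq 2}\A^{i}$, it deletes row/column $1$ and observes that $\varphi([A',1])$ and $\eta^{1}$ are two points of $\V(HD_{1})$ that agree everywhere except possibly at the top index; Proposition~\ref{prop:no almost} (applied at level $n-1\geq 4$) then forces agreement. Thus the paper leans entirely on the hard representation-theoretic Proposition~\ref{prop:no almost}, whereas your route is an elementary cocycle/sign-matching argument that never invokes it. Your method is more explicit and constructive; the paper's is shorter once Proposition~\ref{prop:no almost} is in hand.

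\textbf{Gaps in your argument.} First, the assertion that two symmetric matrices with identical principal minors are $\pm 1$-diagonally conjugate is correct, but your justification via ``triangle constraints from the $3\times 3$ minors'' is not enough. Triangles only force $\epsilon_{jk}\epsilon_{kl}\epsilon_{lj}=1$ when $a_{jk},a_{kl},a_{lj}$ are all nonzero; for a chordless cycle of length $\geq 4$ in the support graph one must use the corresponding $k\times k$ principal minor (its unique sign-sensitive term is $\pm 2\prod a_{j_{s}j_{s+1}}$) to obtain $\prod_{e\in C}\epsilon_{e}=1$, and then conclude via $H^{1}(G,\ZZ/2)$ that $\epsilon$ is a coboundary. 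You should say this. Second, your cocycle step silently assumes all relevant entries $a_{kl},a_{kn}$ are nonzero. When some vanish, $\delta_{i,j,k}$ is undefined and the support graph on the overlap may be disconnected, so the ``single residual sign at position $n$'' becomes several independent residual signs; the bookkeeping still closes up, but you have not shown it.

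\textbf{A misattribution.} The $k$-independence of $\delta_{i,j,k}$ is \emph{not} where $z\in\V(HD)$ enters. That step uses only that the $3\times 3$ principal minor $z_{\{k,l,n\}}$ is common to all $A^{(i)}$ with $i\notin\{k,l,n\}$, i.e.\ pure principal-minor data, not the $2\times 2\times 2$ hyperdeterminantal relations. In your plan, the hypothesis $z\in\V(HD)$ is used exactly once: through the induction hypothesis $\eta^{i}\in\V(HD_{i})=Z_{n-1}$ to produce the matrices $A^{(i)}$. (Proposition~\ref{prop:no almost} is still needed afterwards, in the main theorem, to pin down the single remaining coordinate $z_{[1,\dots,1]}$.)
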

\begin{proof} By the induction hypothesis, each $\A^{i}$ is non-empty. Assume $\cap_{i=2}^{n}\A^{i}$ is non-empty. We show that if $A\in \cap_{i=2}^{n}\A^{i}$ then $A\in \A^{1}$.  The same argument we use will also prove that if $A\in \cap_{i=3}^{n}\A^{i}$, then $A\in \A^{1}$, and so on, so it suffices to check the last, most restrictive case.  Also because of the $\mathfrak{S}_{n}$ action, we don't have to repeat the proof for every permutation.

If $A\in \cap_{i=2}^{n}\A^{i}$, then $\Delta_{I}A = z_{I}$ for all $I \neq [0,i_{2},\dots,i_{n}]$ with $|I|\leq n-2$.  The only possible exception we could have is for $\Delta_{[0,1,\dots,1]}$ might not be equal to $z_{[0,1,\dots,1]}$.
Let $A'$ denote the principal submatrix of $A$ formed by deleting the $1^{st}$ row and column of $A$.
Now since $n\geq 5$, $|I| \geq 3$, $A'$ is at least as large as $4\times 4$, and we have determined that all of the principal minors of $A'$ except possibly the determinant agree with a fixed point $\eta^{1} \in \V(HD_{1})$ (in other words $\Delta(A')_{I}= \eta^{i}_{I}$ for all $I\not=[1,\dots,1]$), so we can apply Proposition~\ref{prop:no almost} to conclude that the determinant of $A'$ also agrees with $\eta^{1}$ (\ie $\Delta_{[1,\dots,1]}(A') = \eta^{1}_{[1,\dots,1]}$). Therefore any such $A$ must have $\Delta_{[0,1,\dots,1]}(A) = z_{[0,1,\dots,1]}$, and we have shown $A\in \A^{1}$.
\end{proof}
Lemma~\ref{lem:nonempty} above proves the existence of a symmetric matrix $A$ such that $\Delta_{I}(A) = z_{I}$ for all $I \neq [1,\dots,1]$. Then since both $z_{I}X^{I}$ and $\Delta_{I}(A) X^{I}$ are points in $\V(HD)$, Proposition~\ref{prop:no almost} implies that $\Delta_{[1,\dots,1]}(A) = z_{[1,\dots,1]}$, and this finish the proof of the main theorem.

\begin{remark}[\textbf{Building a matrix}]
Note that when $n\geq 4$, the proof we gave can be used also to construct a symmetric matrix whose principal minors are prescribed by a point $z\in \widehat{\V(HD)} \cap \{z\mid z_{[0,\dots,0]} \neq 0\}$.  The entries of $z$ corresponding to $1\times 1$ and $2 \times 2$ principal minors determine a large finite set $\A$ of candidate matrices that could map to $z$.  Restrict the set $\A$ to only those matrices whose $3\times 3$ principal minors agree with the corresponding entries of $z$, \ie keep only the matrices $A$ so that $\Delta(A)_{I}=z_{I}$ for all $|I| \leq 3$.  We claim that the remaining set of matrices all map to $z$ under the principal minor map. If $A$ is such that all of the $3\times 3$ principal minors agree with $z$, then Proposition~\ref{prop:no almost} implies that each $4\times 4$ principal minor of $A$ must agree with $z$ also. Iterate this argument to imply that all of the principal minors of $A$ must agree with $z$.
\end{remark}


\section*{Acknowledgments }
The author would like to thank J.M. Landsberg for suggesting this problem as a thesis topic and for his endless support and advice along the way.  We thank the two anonymous reviewers who read the first draft of this paper as well as the third reviewer who read the second draft for their numerous useful suggestions for revision. We also thank Shaowei Lin, Linh Nguyen, Giorgio Ottaviani,  Bernd Sturmfels, and Zach Teitler for useful conversations. Shaowei Lin pointed out the reference \cite{Nanson}.  Bernd Sturmfels suggested the addition of Corollary~\ref{cor}.

\bibliographystyle{amsplain}
\bibliography{master_bibdata}

\end{document}